\newtheorem{theorem}{Theorem}[section]
\newtheorem{lemma}[theorem]{Lemma}
\newtheorem{proposition}[theorem]{Proposition}
\newtheorem{corollary}[theorem]{Corollary}
\newtheorem{definition}{Definition}[section]
\newtheorem*{theorem*}{Theorem}
\newtheorem*{corollary*}{Corollary}
\newtheorem{fact}{Fact}[section]
\newcommand{\instar}{\textnormal{\hbox{is}}}
\newcommand{\aface}{\textnormal{\hbox{af}}}
\newcommand{\cface}{\textnormal{\hbox{cf}}}
\newcommand{\abs}[1]{\lvert#1\rvert}
\renewcommand{\make@df@tag@@@}[2][]{%
	\gdef\df@tag{%
		\tagform@{#2\rlap{\hphantom)#1}}%
		\toks@\@xp{\p@equation{#2}}%
		\edef\@currentlabel{\the\toks@}%
	}%
}
\title{Tutte Invariants for Alternating Dimaps}
\author[1,3]{Kai Siong Yow\thanks{Email: \texttt{ksyow@upm.edu.my} (majority of the work was done at Monash University Australia as part of the author's PhD research)}}
\author[1]{Graham Farr\thanks{Email: \texttt{graham.farr@monash.edu}}}
\author[2]{Kerri Morgan\thanks{Email: \texttt{kerri.morgan@deakin.edu.au}}}
\affil[1]{Faculty of Information Technology, Monash University, Clayton, Victoria 3800, Australia}
\affil[2]{Deakin University, Geelong, Australia, School of Information Technology, Faculty of Science Engineering \& Built Environment}
\affil[3]{Department of Mathematics, Faculty of Science, Universiti Putra Malaysia, 43400 UPM Serdang, Selangor, Malaysia}
\date{\today}
\begin{document}
	
\maketitle

\begin{abstract}
	An \emph{alternating dimap} is an orientably embedded Eulerian directed graph where the edges incident with each vertex are directed inwards and outwards alternately. Three reduction operations for alternating dimaps were investigated by Farr. A \emph{minor} of an alternating dimap can be obtained by reducing some of its edges using the reduction operations. Unlike classical minor operations, these reduction operations do not commute in general. A \emph{Tutte invariant} for alternating dimaps is a function $ P $ defined on every alternating dimap and taking values in a field such that $ P $ is invariant under isomorphism and obeys a linear recurrence relation involving reduction operations. It is well known that if a graph $ G $ is planar, then the Tutte polynomial $ T $ satisfies $ T(G;x,y)=T(G^{*};y,x) $. We note an analogous relation for the extended Tutte invariants for alternating dimaps introduced by Farr. We then characterise the Tutte invariant for alternating dimaps of genus zero under several conditions. As a result of the non-commutativity of the reduction operations, the recursions based on them cannot always be satisfied. We investigate the properties of alternating dimaps of genus zero that are required in order to obtain a well defined Tutte invariant. Some excluded minor characterisations for these alternating dimaps are also given.
\end{abstract}

\emph{Keywords:} alternating dimap, directed graph, embedded graph, minor, triality, Tutte invariant, Tutte polynomial, reduction operation

\section{Introduction}\label{sec:introduction}

Alternating dimaps were introduced by Tutte in 1948~\cite{Tutte1948} as a tool for studying the dissection of equilateral triangles into equilateral triangles. This followed his collaboration with Brooks, Smith and Stone on dissecting rectangles into squares~\cite{BSSTutte1940}. The concept of duality in graphs was extended to a higher order transform, namely \emph{triality}, in alternating dimaps~\cite{Tutte1948,Tutte1973}.

Recently, Farr~\cite{Farr2013pp,Farr2018} introduced three minor operations for alternating dimaps, known as $ 1 $-reduction, $ \omega $-reduction and $ \omega^{2} $-reduction. A \emph{minor} of an alternating dimap can be obtained by reducing some of its edges using these operations. Unlike classical minor operations, these operations do not always commute.

Alternating dimaps are at least as diverse and rich in structure as orientably embedded undirected graphs, since any orientably embedded undirected graph can be converted into an alternating dimap by replacing each edge by a clockwise 2-cycle. It is also worth noting that every orientably embedded Eulerian undirected graph with $ k $ components can be converted into an alternating dimap in $ 2^{k} $ ways: for each component, choose a reference edge arbitrarily, choose one of the two possible directions of it, and let the alternating property determine the direction of all other edges. So, given the breadth of the class of alternating dimaps, and the natural reduction operations for it, it is natural to ask whether a theory of Tutte polynomials may be developed for it.

The first steps in this direction were taken by Farr in~\cite{Farr2013pp,Farr2018}. In this paper, we develop the theory further, establishing when various types of Tutte invariant exist, with both positive and negative results.

A \emph{Tutte invariant} for some class of alternating dimaps is a function $ P $ defined on every alternating dimap in the class and taking values in a field $ \mathbb{F} $ such that $ P $ is invariant under isomorphism and obeys a linear recurrence relation involving reduction operations. Farr~\cite{Farr2018} defined some Tutte invariants including extended Tutte invariants, the c-Tutte invariant and the a-Tutte invariant. We characterise these invariants in this paper.

When $ G $ is a planar graph and $ G^{*} $ is the dual graph of $ G $, the Tutte polynomial $ T(G;x,y) $ of $ G $ satisfies $ T(G;x,y)=T(G^{*};y,x) $. We prove an analogous result for extended Tutte invariants for alternating dimaps.
	
Since the reduction operations do not always commute, an invariant defined recursively using reductions may not always exist. We investigate the conditions required to obtain a well defined Tutte invariant for alternating dimaps of genus zero. First, we determine the form that an extended Tutte invariant must have, if it is to be well defined for all alternating dimaps of genus zero. It turns out that such invariants are of very restricted form. Then, we determine the structure of alternating dimaps of genus zero for which extended Tutte invariants are as unrestricted in form as possible. We also establish some excluded minor characterisations for those alternating dimaps.

For any embedded graph $ G $, its associated alternating dimap $ \hbox{alt}_c(G) $ (respectively, $ \hbox{alt}_a(G) $) is obtained by replacing each edge of $ G $ by a pair of directed edges forming a clockwise face (respectively, anticlockwise face) of size two~\cite{Farr2018}. The c-Tutte invariant $ T_c(D;x,y) $ of an alternating dimap $ D $ was introduced in~\cite{Farr2018} and shown to be well defined for any alternating dimap of the form $ \hbox{alt}_c(G) $ where $ G $ is a plane graph, when it equals the Tutte polynomial of $ G $. But it can be defined for some other alternating dimaps too. We determine the class of alternating dimaps for which the c-Tutte invariant is well defined. It properly contains alternating dimaps of the form $ \hbox{alt}_c(G) $, where $ G $ is a plane graph. This shows that the c-Tutte invariants properly extend the Tutte polynomial of a plane graph. Analogous results are established for a-Tutte invariants and $ \hbox{alt}_a(G) $.

As is common when working with recursively defined invariants, our proofs are mostly by induction. It is worth noting that the theory of alternating dimaps can be formulated in terms of bicubic maps \cite{Tutte1948}. Each framework has its merits: in alternating dimaps, ordinary graph-theoretic contraction appears in one of the operations, while in bicubic maps, triality has a simple interpretation in terms of cycling through the three available colours. We use the alternating dimap framework for the proofs in this paper because the graph concepts used are closer to those found elsewhere in graph theory.

Earlier versions of this work are in~\cite{YowFM2018pp} (v1 with full proofs, v2 with some details omitted) and the first author's PhD thesis \cite[Chapters 3,5,6]{YowPHD2019}.

\section{Definitions and Notation}\label{sec:definitions}

Our notation is mostly standard. Alternating dimap notation and terminology is as in \cite{Farr2018}. We let $ D $ be an alternating dimap throughout this section, unless otherwise stated.

An embedded graph is \emph{2-cell embedded} if each of its faces is homeomorphic to an open unit disc.

A \emph{dimap} is an embedded directed graph that is 2-cell embedded in a disjoint union of orientable surfaces (or 2-manifolds).

An \emph{alternating dimap} is a dimap without isolated vertices where, for each vertex, the sequence of edges incident with it is directed inwards and outwards alternately in a cyclic order around the vertex. All vertices have even degree. An alternating dimap may have loops and multiple edges. If an alternating dimap contains no vertices, edges, or faces, it is the \emph{empty alternating dimap}. Let $ u,v \in V(D) $. We use $ uv $ to represent an edge directed from $ u $ to $ v $, hence $ uv \ne vu $ in this context. We call $ v $ the \emph{head} and $ u $ the \emph{tail} of the edge. The sets of vertices, edges, and faces, and the number of connected components of $ D $, are denoted by $ V(D) $, $ E(D) $, $ F(D) $, and $ k(D) $, respectively. The number of clockwise faces (c-faces) and the number of anticlockwise faces (a-faces) of $ D $ are denoted by $ \cface(D) $ and $ \aface(D) $, respectively. An \emph{in-star} is the set of edges directed into a vertex. The in-star that is directed into a vertex $ v $ is denoted by $ I(v) $. The number of in-stars of $ D $ is denoted by $ \instar(D) $. The next edge after $ e \in E(D) $ going around a clockwise face (respectively, an anticlockwise face) in the direction indicated by $ e $ is the \emph{right successor} (respectively, \emph{left successor}) of $ e $.

Two alternating dimaps $ D_{1} $ and  $ D_{2} $ are \emph{isomorphic}, written as $ D_{1} \cong D_{2} $, if their underlying directed graphs are isomorphic and there exists a bijection $ \phi:E(D_{1}) \rightarrow E(D_{2}) $ such that $ e \in E(D_{1}) $ has $ f $ and $ g $ as its right and left successors respectively, if and only if $ \phi(e) \in E(D_{2}) $ has $ \phi(f) $ and $ \phi(g) $ as its right and left successors, respectively.

We say $ D' $ is an \emph{alternating subdimap} of $ D $, written as $ D' \le D $, if $ D' $ is an alternating dimap where $ V(D') \subseteq V(D) $ and $ E(D') \subseteq E(D) $.

Suppose $ e \in E(D) $. We write $ D/e $ and $ D\setminus e $ for the alternating dimap, or dimap, obtained from $ D $ by contracting $ e $ and by deleting $ e $, respectively\footnote{A dimap (instead of an alternating dimap) $ D\setminus e $ is obtained by deleting a non-loop edge $ e $ from an alternating dimap $ D $. In this scenario, both of the endvertices of $ e $ have odd degree in $ D\setminus e $, hence $ D\setminus e $ is no longer an alternating dimap. See \cref{sec:type_of_loops} for edge types in alternating dimaps.}.

In any embedded graph, the \emph{boundary} $ \partial g $ of a face $ g $ is the closed trail that bounds $ g $. A face is \emph{incident} with every vertex and every edge that belongs to its boundary. Two faces are \emph{adjacent} if their boundaries share at least one common edge.

By the alternating property, every face in $ D $ is bounded either by an embedding of (the graph of) a clockwise closed trail or an anticlockwise closed trail. Let $ C $ be the closed trail bounding a face in $ D $. The alternating subdimap \emph{induced} by $ C $, written as $ D[C] $, is the alternating subdimap of $ D $ with the vertex set $ V(C) $ and the edge set $ E(C) $ (see Figure~\ref{fig:Induced_alternating_subdimap}).
\begin{figure}[t]
	\centering
	\includegraphics[width=0.75\textwidth]{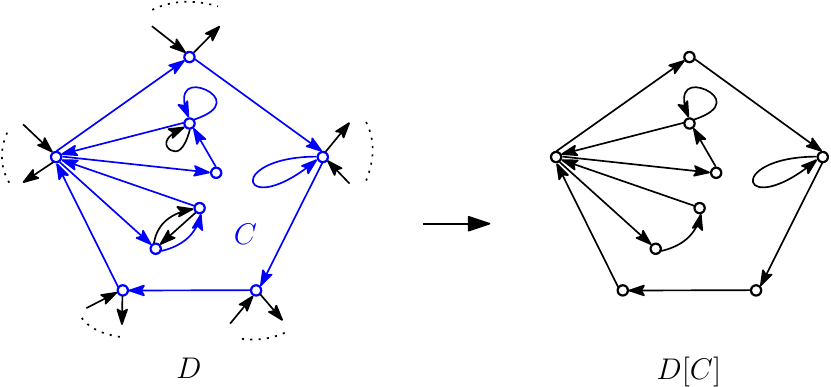}
	\caption{An alternating subdimap induced by a closed trail $ C $ (shown in blue) bounding a face in $ D $}
	\label{fig:Induced_alternating_subdimap}
\end{figure}

A \emph{block} of an alternating dimap is a maximal connected alternating subdimap that contains no cutvertex.

The \emph{closure} of a subset $ S $ of points in a topological space is the union of $ S $ and its boundary.

Suppose the plane alternating dimap $ D $ contains blocks $ B_{1} $ and $ B_{2} $, and let $ g \in F(B_{2}) $. The block $ B_{1} $ is \emph{within}\label{def:within} the face $ g $ if the point set formed by the embeddings of $ V(B_{1}) $ and $ E(B_{1}) $ is a subset of the closure of $ g $. In Figure~\ref{fig:Within_blocks}(a), the block $ B_{1} $ (highlighted in green) is within the face $ g \in F(B_{2}) $, whereas the block $ B_{3} $ (highlighted in blue) is within the face $ h \in F(B_{2}) $ (and not within $ g $). The faces $ g,h \in F(B_{2}) $ are shown in Figure~\ref{fig:Within_blocks}(b).
\begin{figure}[t]
	\centering
	\begin{subfigure}[t]{0.4\textwidth}
		\includegraphics[width=\textwidth]{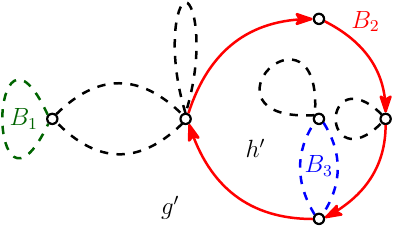}
		\caption{$ D $}
	\end{subfigure}\qquad \qquad
	~
	\begin{subfigure}[t]{0.23\textwidth}
		\includegraphics[width=\textwidth]{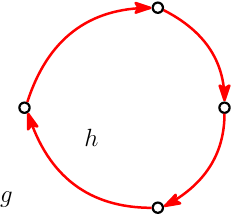}
		\caption{$ B_{2} $}
	\end{subfigure}
	\caption{(a) Blocks within faces of the block $ B_{2} $ in an alternating dimap $ D $ where $ B_{2} $ is a directed cycle and other blocks are shown schematically using dashed lines, (b) The block $ B_{2} $ in $ D $}
	\label{fig:Within_blocks}
\end{figure}

Let $ D_{1} $ and $ D_{2} $ be two plane alternating dimaps, let $ a_{1} $ and $ a_{2} $ be anticlockwise faces of $ D_{1} $ and $ D_{2} $ respectively, and let $ v_{1} \in V(\partial a_{1}) $ and $ v_{2} \in V(\partial a_{2}) $ be vertices of the faces $ a_{1} $ and $ a_{2} $. The \emph{c-union}\label{def:c-union} $ D $ of $ D_{1} $ and $ D_{2} $ \emph{with respect to} $ a_{1}, v_{1}, a_{2} $ and $ v_{2} $, denoted by $ D_{1} \cup_{c} D_{2} $, is obtained by identifying $ v_{1} $ and $ v_{2} $ such that $ D_{1} $ is within the anticlockwise face $ a_{2} $ of $ D_{2} $, and $ D_{2} $ is within the anticlockwise face $ a_{1} $ of $ D_{1} $, in $ D $. When the context is clear, we may just refer to the \emph{c-union} $ D $ of $ D_{1} $ and $ D_{2} $. Note that the set of clockwise faces of $ D $ is the union of the sets of clockwise faces of $ D_{1} $ and $ D_{2} $ (hence the term c-union), and $ \abs{E(D)}=\abs{E(D_{1})}+\abs{E(D_{2})} $. An example of a c-union of two alternating dimaps is shown in Figure~\ref{fig:c-union}. The \emph{a-union} is defined by appropriate modifications.
\begin{figure}[t]
	\centering
	\begin{subfigure}[t]{0.9\textwidth}
		\includegraphics[width=\textwidth]{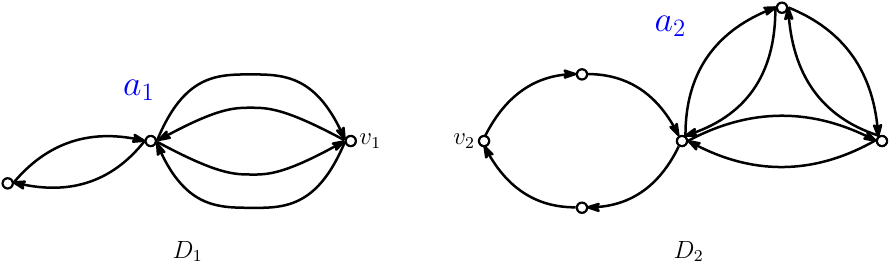}
		\caption{Two alternating dimaps $ D_{1} $ and $ D_{2} $}
	\end{subfigure}
	
	%add desired spacing between images, e. g. ~, \quad, \qquad, \hfill etc.
	%(or a blank line to force the subfigure onto a new line)
	\begin{subfigure}[t]{0.75\textwidth}
		\includegraphics[width=\textwidth]{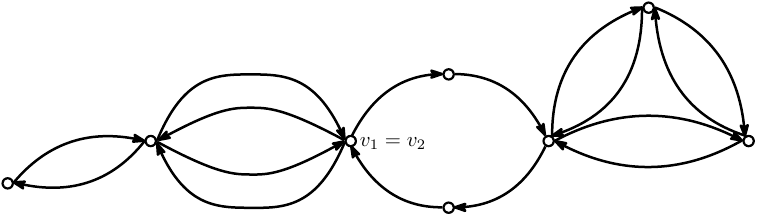}
		\caption{$ D = D_{1} \cup_{c} D_{2} $}
	\end{subfigure}
	\caption{A c-union of two alternating dimaps. For convenience, the anticlockwise faces $ a_{1} \in F(D_{1})$ and $ a_{2} \in F(D_{2}) $ are both shown as outer regions}
	\label{fig:c-union}
\end{figure}

A \emph{plane alternating dimap} $ \hbox{Pl}(D) $ of an alternating dimap $ D $ of genus zero is obtained from $ D $ by converting its embedding in the sphere into an embedding in the plane, by stereographic projection in the usual way~\cite{Lawson2003}.

The \emph{face-rooted alternating dimap} $ D^{g} $ is an alternating dimap $ D $ in which the face $ g $ is distinguished from the other faces. If $ D^{g} $ has genus zero, then the plane alternating dimap $ \hbox{Pl}(D^{g}) $ of $ D^{g} $ is obtained according to the process described above such that $ g $ is the outermost region of $ \hbox{Pl}(D^{g}) $. Conversely, given a plane alternating dimap $ P $ with outermost region $ g $, by reverse stereographic projection, we obtain a face-rooted alternating dimap $ D^{g} = \hbox{Pl}^{-1}(P) $ that has genus zero.

Suppose $ C $ is the closed trail forming the boundary of a face $ h $ in an alternating dimap $ D $, and $ H=D[C] $ (see Figure~\ref{fig:Induced_alternating_subdimap}). It is routine to show that $ H $ can be embedded on a sphere. The closed trail $ C $ can be partitioned into cycles, and each of these cycles encloses a face of opposite type to $ h $. For any face $ g \in F(H) \setminus \{h\} $, the face-rooted alternating dimap $ H^{g} $ can be used to obtain the plane alternating dimap $ \hbox{Pl}(H^{g}) $ (see Figure~\ref{fig:Face_rooted_alternating_dimap}).
\begin{figure}[t]
	\centering
	\includegraphics[width=0.3\textwidth]{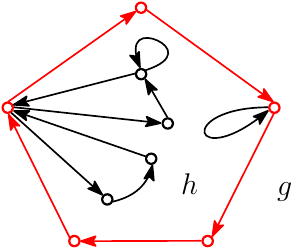}
	\caption{The plane alternating dimap $ \hbox{Pl}(H^{g}) $ of the face-rooted alternating dimap $ H^{g} $ where $ H $ is induced by the closed trail bounding $ h $}
	\label{fig:Face_rooted_alternating_dimap}
\end{figure}

In this scenario, the \emph{outer cycle} of $ h $ in $ D $ \emph{with respect to} $ g $ is the cycle formed by the common edges between $ h $ and $ g $ in $ H $. In Figure~\ref{fig:Face_rooted_alternating_dimap}, the outer cycle of $ h $ in $ D $ with respect to $ g $ is coloured red. When the choice of $ g $ is clear from the context, we may refer just to the \emph{outer cycle} of $ h $ in $ D $. If a plane alternating subdimap $ H = D[C] $ is converted into a plane alternating dimap $ P = \hbox{Pl}(H^{g}) $, we use the outermost region $ g $ of $ P $ to determine the outer cycle of the face bounded by $ C $ in $ D $.

An \emph{ordered alternating dimap}~\cite{Farr2018} is a pair $ (D,<) $ where $ D $ is an alternating dimap and $ < $ is a linear order on $ E(D) $. An ordered alternating dimap can be obtained by assigning a fixed edge-ordering to an alternating dimap.

Let $ \mathcal{G} $ be the set of plane graphs. Then, $ \hbox{alt}_c(\mathcal{G}) \coloneqq \{ \hbox{alt}_c(G) \mid G \in \mathcal{G} \} $ and $ \hbox{alt}_a(\mathcal{G}) \coloneqq \{ \hbox{alt}_a(G) \mid G \in \mathcal{G} \} $. %where $ \hbox{alt}_c(G) $ and $ \hbox{alt}_a(G) $ of $ G $ are as defined at the end of \cref{sec:introduction}.

\section{Triloops, Semiloops and Multiloops}\label{sec:type_of_loops}

There are a number of different types of special edges that have been defined in alternating dimaps including 1-loops, $ \omega $-loops and $ \omega^{2} $-loops~\cite{Farr2018}. An edge whose head has degree two is a \emph{1-loop}. A single edge forming an a-face is an \emph{$ \omega $-loop} whereas a single edge forming a c-face is an \emph{$ \omega^{2} $-loop}. An \emph{ultraloop} is concurrently a 1-loop, an $ \omega $-loop and an $ \omega^{2} $-loop. It is the only possible single-edge component in any alternating dimap. An illustration of these loops is given in Figure~\ref{fig:Loops}.
\begin{figure}[t]
	\centering
	\includegraphics[width=0.6\textwidth]{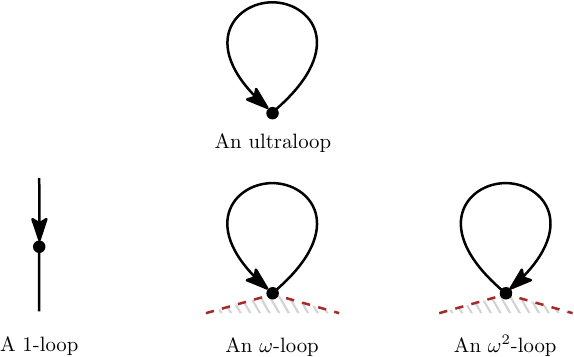}
	\caption{Loops}
	\label{fig:Loops}
\end{figure}

An edge is a \emph{triloop} if it is a 1-loop, an $ \omega $-loop or an $ \omega^{2} $-loop. In other words, it is a $ \mu $-loop for some $ \mu \in \{1, \omega, \omega^{2}\} $. If a $ \mu $-loop is not an ultraloop, then it is a \emph{proper $ \mu $-loop}. A proper $ \mu $-loop is a \emph{proper triloop}.

A \emph{1-semiloop} is a standard loop. We consider two scenarios in defining $ \omega $-semiloops and $ \omega^{2} $-semiloops. If a loop $ e $ is its own right successor, $ e $ is an \emph{$ \omega $-semiloop}. It is also an $ \omega^{2} $-loop under this circumstance. Note also that every $ \omega^{2} $-loop is an $ \omega $-semiloop. On the other hand, if $ e $ and its right successor are distinct, and they form a cutset of $ D $ or removal of them decreases the genus of $ D $, then $ e $ is also an \emph{$ \omega $-semiloop}. An $ \omega $-loop $ e $ is also an \emph{$ \omega^{2} $-semiloop} if $ e $ is its own left successor. Note also that every $ \omega $-loop is an $ \omega^{2} $-semiloop. If $ e $ and its left successor are distinct, and they form a cutset of $ D $ or removal of them decreases the genus of $ D $, then $ e $ is an \emph{$ \omega^{2} $-semiloop}. In both cases, by removing the cutsets, the number of components of $ D $ is increased, or the genus of $ D $ is decreased. For $ \mu \in \{1, \omega, \omega^{2}\} $, a $ \mu $-semiloop is a \emph{proper $ \mu $-semiloop} if it is not a triloop. An illustration of the three different types of semiloop is in Figure~\ref{fig:Semiloops}.
\begin{figure}[t]
	\centering
	\includegraphics[width=0.75\textwidth]{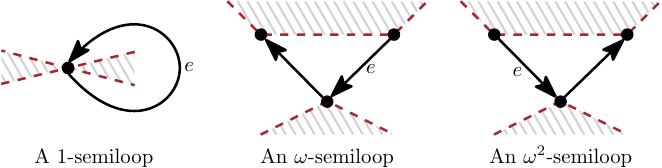}
	\caption{Semiloops}
	\label{fig:Semiloops}
\end{figure}

An edge is \emph{proper} if it is not a semiloop (and hence not a triloop or an ultraloop).

Suppose $ e $ and $ f $ are two loops in an alternating dimap $ D $. %and they share one common vertex
We can use our earlier definition (see page~\pageref{def:within}) of one block being within a face of another, and the fact that every loop is a block. The loop $ e $ is \emph{within} a face $ g $ of $ f $ if the point set formed by the embedding of $ e $ is a subset of the closure of $ g $.

Let $ D_{1} $ be an alternating dimap of genus zero with a single vertex $ v_{1} $ and $ \abs{E(D_{1})} = m \ge 1 $. Since every edge in $ D_{1} $ is a loop, there exists a clockwise face or an anticlockwise face of size one in $ D_{1} $. Suppose $ D_{1} $ has an $ \omega $-loop $ e $ and let $ f_{1} $ be the anticlockwise face of size one that is incident with $ e $. Let $ D_{2} $ be an alternating dimap of genus zero, $ f_{2} \in F(D_{2}) $ be an anticlockwise face and $ v_{2} \in V(\partial f_{2}) $. Suppose $ D = D_{1} \cup_{c} D_{2} $ with respect to $ f_{1}, v_{1}, f_{2} $ and $ v_{2} $. Then, $ D_{1} $ is a \emph{c-multiloop} of size $ m $ within the anticlockwise face $ f_{2} $ of $ D_{2} $, in $ D $. An example of a c-multiloop is shown in Figure~\ref{fig:c-multiloops}.
\begin{figure}[t]
	\centering
	\includegraphics[width=0.6\textwidth]{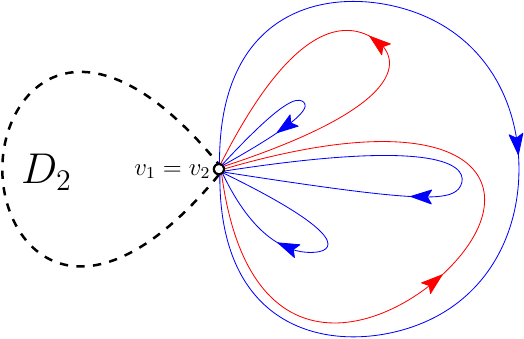}
	\caption{A c-multiloop $ D_{1} $ within an anticlockwise face of an alternating dimap $ D_{2} $ (the edges of the c-multiloop $ D_{1} $ are coloured red and blue, and the alternating dimap $ D_{2} $ is shown in dashed line)}
	\label{fig:c-multiloops}
\end{figure}
An \emph{a-multiloop} is defined by appropriate modifications.

A c-multiloop of size $ m $ within an anticlockwise face $ f_{1} $ of an alternating dimap $ D $ of genus zero can also be constructed as follows. Add in one proper $ \omega^{2} $-loop $ e $ that is incident with a vertex $ v \in V(\partial f_{1}) $. Denote by $ f_{2} $ the clockwise face of size one incident with $ e $. Then, $ m-1 $ proper $ \omega $-loops or $ \omega^{2} $-loops that are incident with $ v $ are added within the face $ f_{2} $ such that these $ m-1 $ loops only intersect at $ v $, and edges incident with $ v $ are directed inwards and outwards alternately in a cyclic order around $ v $. The $ m $ edges that are added into $ D $ form a c-multiloop within $ f_{1} $ of $ D $. Note that at the time each of the $ m-1 $ loops $ h $ is added within $ f_{2} $, if $ h $ is a proper $ \omega $-loop (respectively, $ \omega^{2} $-loop), it has an anticlockwise face (respectively, a clockwise face) of size one. If some other loops are added within the anticlockwise face (respectively, clockwise face) of size one of $ h $, then $ h $ is no longer a proper $ \omega $-loop (respectively, $ \omega^{2} $-loop).

\section{Minor Operations and Triality}\label{sec:reduction_operations}

In this section, we give the definitions of three minor operations for alternating dimaps, namely $ 1 $-reductions, $ \omega $-reductions and $ \omega^{2} $-reductions~\cite{Farr2018}.

Let $ D $ be an alternating dimap. Suppose $ u,v \in V(D) $ and $ e=uv \in E(D) $.

For $ \mu \in\{1,\omega,\omega^{2}\} $, the alternating dimap that is obtained by reducing $ e $ in $ D $ using $ \mu $-reduction is denoted by $ D[\mu]e $.

For the \emph{$ 1 $-reduction}, if $ e $ is an $ \omega $-loop or an $ \omega^{2} $-loop, the edge $ e $ is deleted to obtain $ D[1]e $. If $ e $ is not a loop, then $ D[1]e $ is obtained by contracting the edge $ e $. Note that contracting an edge in an alternating dimap always preserves the alternating property of the alternating dimap. If $ e $ is a $ 1 $-semiloop that is incident with a vertex $ v $, the alternating dimap $ D[1]e $ is formed as follows (see Figure~\ref{fig:1_Reduction_1semiloop}).
\begin{figure}[t]
	\centering
	\includegraphics[width=0.7\textwidth]{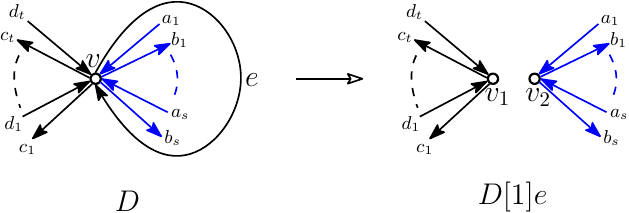}
	\caption{A 1-semiloop $ e $ in $ D $ is reduced by using the $ 1 $-reduction}
	\label{fig:1_Reduction_1semiloop}
\end{figure}
Let $ e,a_{1},b_{1},\ldots,a_{s},b_{s},e,c_{1},d_{1},\ldots,c_{t},d_{t} $ be the cyclic order of the edges that are incident with $ v $, starting from some edge $ e $ that is directed out from $ v $. Observe that each $ a_{i} $ and each $ d_{i} $ is an incoming edge of $ v $, and each $ b_{i} $ and each $ c_{i} $ is an outgoing edge of $ v $. In $ D[1]e $, the edge $ e $ is removed and the vertex $ v $ is split into two new vertices $ v_{1} $ and $ v_{2} $. Each $ a_{i} $ and each $ b_{i} $ is incident with $ v_{2} $, while each $ c_{i} $ and each $ d_{i} $ is incident with $ v_{1} $. The cyclic orderings of edges incident with $ v_{1} $ and $ v_{2} $ are induced by the cyclic ordering around $ v $. Note that this reduction may either increase the number of components or reduce the genus.

Let $ \ell=vn $ and $ r=vm $ be the left successor and the right successor of $ e=uv $ in $ D $, respectively.

For the \emph{$ \omega $-reduction}, if $ e $ is an $ \omega $-loop or an $ \omega^{2} $-loop, the edge $ e $ is deleted to obtain $ D[\omega]e $. Otherwise, the alternating dimap $ D[\omega]e $ is obtained by first deleting both of the edges $ e $ and $ \ell $, and a new edge $ \ell'=un $ is created such that the position of the tail (respectively, head) of $ \ell' $ in the cyclic ordering of edges incident with $ u $ (respectively, $ n $) in $ D[\omega]e $ is the same as the position of the tail (respectively, head) of $ e $ (respectively, $ \ell $) in the cyclic ordering of edges incident with $ u $ (respectively, $ n $) in $ D $. If $ \hbox{deg}(v)=2 $, the vertex $ v $ is also removed from $ D[\omega]e $. Note that if $ e $ is a proper $ \omega^{2} $-semiloop then this may increase the number of components or reduce the genus.

For the \emph{$ \omega^{2} $-reduction}, if $ e $ is an $ \omega $-loop or an $ \omega^{2} $-loop, the edge $ e $ is deleted to obtain $ D[\omega^{2}]e $. Otherwise, the alternating dimap $ D[\omega^{2}]e $ is obtained by first deleting both of the edges $ e $ and $ r $, and a new edge $ r'=um $ is created such that the position of the tail (respectively, head) of $ r' $ in the cyclic ordering of edges incident with $ u $ (respectively, $ m $) in $ D[\omega^{2}]e $ is the same as the position of the tail (respectively, head) of $ e $ (respectively, $ r $) in the cyclic ordering of edges incident with $ u $ (respectively, $ m $) in $ D $. If $ \hbox{deg}(v)=2 $, the vertex $ v $ is also removed from $ D[\omega^{2}]e $. Note that if $ e $ is a proper $ \omega $-semiloop then this may increase the number of components or reduce the genus.

We call these three operations the \emph{reduction operations} or the \emph{minor operations} for alternating dimaps.

A \textit{minor} of an alternating dimap $ D $ is obtained by reducing some of its edges using a sequence of reduction operations.

For the reduction of a triloop $ e \in E(D) $, we have $ D[1]e=D[\omega]e=D[\omega^{2}]e $. Since the type of reduction operation is insignificant, we sometimes write $ D[*]e $ when a triloop $ e $ is reduced.

For $ \mu \in \{1,\omega,\omega^{2}\} $, if $(D,<)$ is an ordered alternating dimap, then the $ \mu $\emph{-reduction} $ (D,<)[\mu] $ of $ (D,<) $ is the ordered alternating dimap $ (D[\mu]e_{0},<') $ where $ e_{0} $ is the first edge in $ E(D) $ under $ < $ and the order $ <' $ on $ E(D)\setminus \{e_{0}\} $ is obtained by simply removing $ e_{0} $ from the order $ < $.

The concept of \emph{triality} (or \emph{trinity}) was introduced by Tutte when he studied the dissections of equilateral triangles~\cite{Tutte1948}. See, for example~\cite{Berman1980,Farr2018,Tutte1948,Tutte1973} for full details. We use the notation of~\cite{Farr2018}. The trial of alternating dimap $ D $ is denoted by $ D^{\omega} $, and the trial of $ D^{\omega} $ is $ D^{\omega^{2}} $. We put $ \omega^{3} = 1 $ and $ D^{1} = D $, so that the trial of $ D^{\omega^{2}} $ is $ D^{\omega^{3}} = D^{1} = D $, as required by triality. The images of an edge $ e \in E(D) $ under successive trials are denoted by $ e^{\omega} \in E(D^{\omega}) $ and $ e^{\omega^{2}} \in E(D^{\omega^{2}}) $, with $ e^{\omega^{3}} = e^{1} = e $. The edge types of $ e^{\omega} $ and $ e^{\omega^{2}} $ are shown in Table~\ref{tab:trial_edge_type}.
\begin{table}[ht]
	\centering
	\begin{tabular}{|c|c|c|}
		\hline
		$ e $ & $ e^{\omega} $ & $ e^{\omega^{2}} $\\
		\hline \hline
			ultraloop & ultraloop & ultraloop\\
		\hline
			proper $ 1 $-loop & proper $ \omega $-loop & proper $ \omega^{2} $-loop\\
		\hline
			proper $ 1 $-semiloop & proper $ \omega $-semiloop & proper $ \omega^{2} $-semiloop\\
		\hline
			proper edge & proper edge & proper edge\\
		\hline
	\end{tabular}
	\caption{The edge type of an edge $ e $ after trial operations on $ e $ (see Figures~\ref{fig:Loops} and \ref{fig:Semiloops} for different edge types)}
	\label{tab:trial_edge_type}
\end{table}

\section{Characterisations of Extended Tutte Invariants}\label{sec:characterisations_of_ETI}

We characterise extended Tutte invariants in this section. The definition of extended Tutte invariants is given in Definition~\ref{def:Extended_Tutte_Invariant}. These are intended to be as general as possible, in that they use different linear recursions for all the different edge types, with no assumptions made about relations between the factors introduced for each reduction. For brevity, we use $ P(D) $ as a shorthand for \[ P(D;w,x,y,z,a,b,c,d,e,f,g,h,i,j,k,l) \] throughout this article.

Throughout, $ \mathcal{A} $ denotes a class of alternating dimaps.

\begin{definition}\label{def:Multiplicative Invariant}
	Let $ \mathbb{F} $ be a field. A \emph{multiplicative invariant (over $ \mathbb{F} $)} for alternating dimaps in $ \mathcal{A} $ is a function $ P \colon \mathcal{A} \to \mathbb{F} $, such that $ P $ is invariant under isomorphism, $ P(\emptyset)=1 $ and for the disjoint union of two alternating dimaps, $ G $ and $ H $, $ P(G\cup H)=P(G) \cdot P(H) $.
\end{definition}

\begin{definition}\label{def:Extended_Tutte_Invariant}
	An \emph{extended Tutte invariant} for alternating dimaps in $ \mathcal{A} $ with respect to a parameter sequence $ (w,x,y,z,a,b,c,d,e,f,g,h,i,j,k,l) $ of elements of a field $ \mathbb{F} $ is a multiplicative invariant $ P $ over $ \mathbb{F} $ such that for any alternating dimap $ D \in \mathcal{A} $ and $ r \in E(D) $,
	\begin{enumerate}
		\item if $ r $ is an ultraloop,
		\begin{equation*}
			P(D)=w\cdot P(D\setminus r), \tag{ETI1}
		\end{equation*}
		
		\item if $ r $ is a proper 1-loop,
		\begin{equation*}
			P(D)=x\cdot P(D[1]r), \tag{ETI2}
		\end{equation*}
		
		\item if $ r $ is a proper $ \omega $-loop,
		\begin{equation*}
			P(D)=y\cdot P(D[\omega]r), \tag{ETI3}
		\end{equation*}
		
		\item if $ r $ is a proper $ \omega^{2} $-loop,
		\begin{equation*}
			P(D)=z\cdot P(D[\omega^{2}]r), \tag{ETI4}
		\end{equation*}
		
		\item if $ r $ is a proper 1-semiloop,
		\begin{equation*}
			P(D)=a\cdot P(D[1]r)+b\cdot P(D[\omega]r)+c\cdot P(D[\omega^{2}]r), \tag{ETI5}
		\end{equation*}
		
		\item if $ r $ is a proper $ \omega $-semiloop,
		\begin{equation*}
			P(D)=d\cdot P(D[1]r)+e\cdot P(D[\omega]r)+f\cdot P(D[\omega^{2}]r), \tag{ETI6}
		\end{equation*}
		
		\item if $ r $ is a proper $ \omega^{2} $-semiloop,
		\begin{equation*}
			P(D)=g\cdot P(D[1]r)+h\cdot P(D[\omega]r)+i\cdot P(D[\omega^{2}]r), \tag{ETI7}
		\end{equation*}
		
		\item otherwise,
		\begin{equation*}
			P(D)=j\cdot P(D[1]r)+k\cdot P(D[\omega]r)+l\cdot P(D[\omega^{2}]r). \tag{ETI8}
		\end{equation*}
	\end{enumerate}
\end{definition}

To define extended Tutte invariants for ordered alternating dimaps $ (D,<) $, we have the following modifications:
\begin{enumerate}
	\item For $ \mu \in \{1,\omega,\omega^{2}\} $, each $ \mu $-reduction is replaced by $ (D,<)[\mu] $.
	\item The edge to be reduced is always the first edge $ e_{0} $ in the linear order $ < $ on $ E(D) $, so the reference to edges is omitted for each reduction operation.
\end{enumerate}

\subsection{Arbitrary Alternating Dimaps, Dependent Parameters}

In this section, we require $ \mathcal{A} $ to be the set of all alternating dimaps of genus zero. In \cref{sec:well_defined_ETI}, we will consider extended Tutte invariants that are only well defined for certain alternating dimaps. 

It is well known that if a graph $ G $ is planar and $ G^{*} $ is the dual graph of $ G $, then 
\begin{equation*}
	T(G;x,y) = T(G^{*};y,x).
\end{equation*}

We give an analogous relation for extended Tutte invariants in Theorem~\ref{thm:trial_ETI}, by using the following theorem by Farr:
\begin{theorem}[{\cite[Theorem 2.2]{Farr2018}}]\label{thm:triality_minor}
	If $ e \in E(D) $ and $ \mu,\nu \in \{1,\omega,\omega^{2}\} $ then \[ D^{\mu}[\nu]e^{\mu}=(D[\mu\nu]e)^{\mu}. \]
\end{theorem}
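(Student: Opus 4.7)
The plan is to unpack everything at the level of a combinatorial encoding of the alternating dimap — most naturally, a rotation system recording the cyclic order of incoming and outgoing edges around each vertex, together with the partition of faces into c-faces and a-faces. In this setup, $\omega$-triality is a specific explicit combinatorial operation that cyclically permutes the three ingredients (vertices, c-faces, a-faces), and $\omega^{2}$-triality is its inverse. The correspondence $e \mapsto e^{\mu}$ is a bijection between the edges of $D$ and those of $D^{\mu}$ that is forced by the recipe for triality, and I would first pin down that bijection carefully enough that, given the local data of $e$ in $D$, one can read off the local data of $e^{\mu}$ in $D^{\mu}$ (and conversely).

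Under this encoding each reduction is a local modification. A $1$-reduction at $e = uv$ merges the rotations at $u$ and $v$; an $\omega$-reduction merges $e$ with its left successor, collapsing across an a-face; an $\omega^{2}$-reduction does the symmetric thing via the right successor and a c-face. Because triality cyclically permutes vertices, a-faces, and c-faces, it conjugates these three merging operations into one another according to the group law in $\{1,\omega,\omega^{2}\} \cong \mathbb{Z}/3\mathbb{Z}$. I would then verify $D^{\mu}[\nu]e^{\mu} = (D[\mu\nu]e)^{\mu}$ by case analysis over the nine pairs $(\mu,\nu)$: the $\mu = 1$ cases are immediate, and after unwinding the definitions the remaining eight reduce to the observation that performing a $\nu$-merge after a cyclic permutation of the three structures is the same as performing the $\mu\nu$-merge first and then permuting. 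The two compound objects produced by the left and right sides can then be matched rotation-by-rotation.

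The hard part, I expect, is the degenerate cases: when $e$ is a proper $\mu$-loop, a proper $\mu$-semiloop, or when the head of $e$ has degree $2$, the reductions $[\nu]$ behave differently — deleting $e$ outright, splitting a vertex, or additionally removing a valence-$2$ vertex. To close the argument I would use Table~\ref{tab:trial_edge_type} to verify that the triality image $e^{\mu}$ of an edge of each special type is again of the corresponding permuted type, so that the two sides of the identity fall into matching cases of the reduction definition. Once this edge-type bookkeeping is in place and one checks that the ambient surface genus and component count are preserved compatibly on both sides, the main identity follows in all cases from the generic combinatorial argument.
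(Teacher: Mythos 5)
This theorem is not proved in the paper at all: it is quoted verbatim from Farr's paper (\cite[Theorem~2.2]{Farr2018}) and used as an imported tool, so there is no in-paper argument to compare yours against. Judged on its own terms, your strategy is the natural one and is consistent with how the result is established at its source: encode the alternating dimap combinatorially, realise $\omega$-triality as an explicit cyclic permutation of the roles of in-stars, a-faces and c-faces, observe that the three reductions are the three ``merge'' operations associated to these three structures, and conclude that triality conjugates $[\nu]$ into $[\mu\nu]$. The reading of the identity as ``permute then $\nu$-merge equals $\mu\nu$-merge then permute'' is exactly right, and you correctly isolate the degenerate cases as the place where care is needed.

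That said, what you have written is a plan rather than a proof, and the steps you defer are precisely where the content lies. Two of them deserve to be flagged. First, the claim that the two sides can be ``matched rotation-by-rotation'' in the eight nontrivial cases presupposes that you know how triality transforms the \emph{left and right successors} of an edge, since the $\omega$- and $\omega^{2}$-reductions are defined in terms of the successor $\ell=vn$ or $r=vm$ and the creation of a new edge $un$ or $um$; you acknowledge the need to ``pin down the bijection $e\mapsto e^{\mu}$'' but never state, let alone verify, the successor-compatibility identity (roughly, that the $\nu$-successor of $e^{\mu}$ in $D^{\mu}$ is the image of the $\mu\nu$-successor of $e$ in $D$) on which everything rests. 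Second, Table~\ref{tab:trial_edge_type} only records edge \emph{types}; it tells you that the two sides of the identity fall into matching clauses of the reduction definition, but not that the outputs of those clauses agree --- e.g.\ when $e$ is its own left successor, when $\deg(v)=2$ and a vertex is deleted, or when the $1$-reduction of a $1$-semiloop splits a vertex and changes the component count or genus. Until those verifications are actually carried out (or the result is simply cited, as the paper does), the argument is incomplete.
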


\begin{theorem}\label{thm:trial_ETI}
	For any extended Tutte invariant $ P $ of an alternating dimap $ D $,
		\begin{align*}
			& P(D;w,x,y,z,a,b,c,d,e,f,g,h,i,j,k,l)\\
			& = P(D^{\omega};w,z,x,y,h,i,g,b,c,a,e,f,d,k,l,j)\\
			& = P(D^{\omega^{2}};w,y,z,x,f,d,e,i,g,h,c,a,b,l,j,k).
		\end{align*}	
\end{theorem}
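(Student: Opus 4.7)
The plan is to prove the first equality $F(D) = F(D^{\omega}; w, z, x, y, h, i, g, b, c, a, e, f, d, k, l, j)$ by induction on $\abs{E(D)}$; the second equality then follows either by applying the first equality to $D^{\omega}$ in place of $D$ (using $(D^{\omega})^{\omega} = D^{\omega^{2}}$) or by an entirely analogous argument with $\omega$ replaced by $\omega^{2}$ throughout.

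Define $G(D) := F(D^{\omega}; w, z, x, y, h, i, g, b, c, a, e, f, d, k, l, j)$. Since $\omega$-triality commutes with isomorphism and disjoint union and fixes the empty dimap, $G$ is itself a multiplicative invariant in the sense of Definition~\ref{def:Multiplicative Invariant}. The base case is $E(D) = \emptyset$: here $D^{\omega} = D$ (triality operates only on edges), none of the sixteen parameters appears in the evaluation of $F(D)$, and therefore $G(D) = F(D)$ by isomorphism invariance of $F$.

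For the inductive step, select any $r \in E(D)$ and split into cases on the type of $r$ as classified by Definition~\ref{def:Extended_Tutte_Invariant}. Table~\ref{tab:trial_edge_type}, read cyclically, dictates the type of $r^{\omega}$ in $D^{\omega}$: ultraloops and proper edges are fixed, while the three proper-loop types and the three proper-semiloop types each undergo a $1 \to \omega \to \omega^{2} \to 1$ rotation. In every case, apply the appropriate ETI axiom to $F(D)$ and the axiom governing the shifted type to $G(D)$. Theorem~\ref{thm:triality_minor} with $\mu = \omega$ yields $D^{\omega}[\nu]r^{\omega} = (D[\omega\nu]r)^{\omega}$, so the three reductions $[1], [\omega], [\omega^{2}]$ applied to $r^{\omega}$ inside $G(D)$ translate to $[\omega], [\omega^{2}], [1]$ applied to $r$ inside $F(D)$. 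The inductive hypothesis converts each $G$-value on the smaller dimap $D[\omega\nu]r$ back into an $F$-value, and the permutation $(w, z, x, y, h, i, g, b, c, a, e, f, d, k, l, j)$ is chosen precisely so that the coefficients in the two expansions agree term by term.

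The principal obstacle is the careful bookkeeping of the permutation across the eight edge types; for the four loop-type cases one also invokes the triloop identity $D[1]r = D[\omega]r = D[\omega^{2}]r$ to confirm that the unique surviving reduction on each side produces the same dimap. Once these correspondences are tabulated, each of the eight cases collapses into a single-line coefficient check, and the $\omega^{2}$-equality is obtained by the same reasoning applied to the cube of the $\omega$-permutation.
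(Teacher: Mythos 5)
Your proposal is correct and takes essentially the same approach as the paper, whose proof consists solely of the words ``Induction on $\abs{E(D)}$''; you have simply supplied the details that the authors leave implicit, namely the use of Theorem~\ref{thm:triality_minor}, the cyclic shift of edge types from Table~\ref{tab:trial_edge_type}, and the verification that the stated parameter permutation (and its square, for the $\omega^{2}$ case) matches the shifted ETI axioms term by term.
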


\begin{proof}
	Induction on $ \abs{E(D)} $. See~\cite{YowPHD2019} for details.
\end{proof}

We proceed to characterise the extended Tutte invariant. We will use the following simple observation.
\begin{lemma}\label{lem:triloop}
	Let $ D $ be an alternating dimap of genus zero that has size $ m $ and $ 0 \le r \le m $. If $ r=2 $ or $ 3 $, the reduced alternating dimap $ D[\mu_{1}]e_{1}[\mu_{2}]e_{2} \ldots [\mu_{m-r}]e_{m-r} $ has a triloop.
\end{lemma}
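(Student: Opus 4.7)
The plan is to argue directly about the reduced alternating dimap
\[
D' := D[\mu_1]e_1[\mu_2]e_2 \cdots [\mu_{m-r}]e_{m-r}.
\]
From the definitions in \cref{sec:reduction_operations}, each reduction decreases the edge count by one and preserves the alternating-dimap property, so $D'$ is an alternating dimap with $|E(D')| = r \in \{2,3\}$. Handshaking gives $\sum_v \deg_{D'}(v) = 2r \le 6$, and since every non-isolated vertex of an alternating dimap has positive even degree, the non-isolated vertices of $D'$ realise only a short list of degree sequences: $(4)$ or $(2,2)$ when $r=2$, and $(6)$, $(4,2)$, or $(2,2,2)$ when $r=3$. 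The problem thus reduces to a finite enumeration.

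Two elementary observations, both immediate from the definitions in \cref{sec:type_of_loops}, clear most of these cases. Any edge whose head has degree $2$ is a $1$-loop, hence a triloop; and any single-edge connected component is an ultraloop, hence a triloop. Together these handle every configuration containing a degree-$2$ vertex or a single-edge component, which covers all of the $(2,2)$, $(4,2)$, and $(2,2,2)$ situations (directed bigons between two degree-$2$ vertices, directed $3$-cycles, loop-plus-bigon structures at vertices of degrees $4$ and $2$, and disjoint unions of ultraloops).

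The only remaining configurations are the all-loops ones, in which every edge of $D'$ is a loop incident with a single vertex of degree $2r$. Here no edge has a degree-$2$ head, so a triloop must appear as an $\omega$-loop or $\omega^{2}$-loop, i.e.\ as a single edge bounding a face of size $1$. I would produce such a face by an Euler-type double count: the total face size is $2|E(D')| = 2r$, while a connected sphere embedding has $|f(D')| = r+1$ faces, so assuming that every face has size $\ge 2$ yields $2(r+1) \le 2r$, a contradiction, and the minimal face must be bounded by a single loop. The main obstacle will be embeddings on surfaces of positive genus, where $|f(D')|$ can be small enough that the count no longer forces a size-$1$ face; this case I would close by a direct inspection of the finitely many alternating cyclic orderings of the $2r$ edge-ends at the single vertex, identifying an $\omega$-loop or $\omega^{2}$-loop explicitly in each surviving configuration.
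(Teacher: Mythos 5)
Your overall strategy is a fleshed-out version of what the paper does: the paper's entire proof is the bare assertion that every alternating dimap of size two or three contains a triloop, plus the observation that $r$ edges remain after $m-r$ reductions. Your reduction to degree sequences, the handling of every configuration containing a degree-$2$ vertex via $1$-loops, and the Euler count forcing a size-$1$ face in the planar all-loops case are all correct and genuinely more informative than the paper's one-liner.

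However, the step you defer to the end is not a routine verification --- it is where the argument actually breaks. Take $r=3$, a single vertex $v$ with three loops $a,b,c$, and the alternating clockwise rotation of ends $a^-,b^+,c^-,a^+,b^-,c^+$ (superscripts $-$ and $+$ denoting in- and out-ends). Tracing faces, the clockwise face is $b,c,a$ (size $3$) and the anticlockwise face is $c,b,a$ (size $3$), so $F=2$ and $V-E+F=0$: this is a valid $2$-cell alternating dimap on the torus. It has no vertex of degree two and no face of size one, hence no $1$-loop, no $\omega$-loop and no $\omega^{2}$-loop --- no triloop at all. (This object is precisely the $1$-posy of Figure~\ref{fig:Torus}(a), which the paper itself exhibits in the concluding remarks as a higher-genus pathology.) So the ``direct inspection of the finitely many alternating cyclic orderings'' you promise would not identify a triloop in each surviving configuration; it would uncover a configuration with none, and your proof cannot be completed as described. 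To be clear, this gap is inherited from the statement: the lemma with $m=r=3$ and zero reductions asserts that this very dimap has a triloop, so the paper's proof-by-inspection is silently assuming genus zero (or that the dimap arises from a planar one). If you add that hypothesis, your Euler argument already closes the case and no further inspection is needed; without it, the lemma is false and no proof exists.
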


\begin{proof}
	Every alternating dimap $ D $ of genus zero that has size two or three contains a triloop. There are $ r $ edges remaining in the reduced alternating dimap after $ m-r $ reductions.
\end{proof}

We extend this result and prove that some sequence of reductions on a connected alternating dimap of genus zero gives a proper triloop in a reduced alternating dimap that has size at least three. Observe that not all alternating dimaps of genus zero that have size three contain a proper triloop.
\begin{lemma}\label{lem:proper_triloop}
	If $ D $ is a connected alternating dimap of genus zero that has size at least three, then some minor of $ D $ with at least three edges contains a proper triloop.
\end{lemma}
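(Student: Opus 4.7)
The plan is to induct on the size $m = \abs{E(D)}$. Before setting up the induction, I would record a small but crucial observation: because $D$ is connected with $m \geq 2$ edges, $D$ contains no ultraloop, since ultraloops are precisely the single-edge components of an alternating dimap and a connected dimap with two or more edges admits no such component. Hence every triloop occurring in $D$---or in any connected minor of $D$ with at least two edges---is automatically proper, and the task reduces to producing a minor of size at least three containing some triloop.

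For the base case $m = 3$, I would invoke \cref{lem:triloop} (with $r = 3$, so that no reductions are performed), which ensures that $D$ itself contains a triloop; by the observation above this triloop is automatically proper, and so $D$ serves as its own required minor. For the inductive step $m \geq 4$, assume the claim for all sizes $3 \le m' < m$. If $D$ already contains a proper triloop, then $D$ is the desired minor. Otherwise, I would choose an edge $e \in E(D)$ and a reduction index $\mu \in \{1, \omega, \omega^{2}\}$ so that $D[\mu]e$ is a connected alternating dimap of size $m - 1 \geq 3$, and then apply the induction hypothesis to $D[\mu]e$ to obtain a minor of $D[\mu]e$---hence of $D$---with at least three edges and a proper triloop. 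The canonical choice is to take $e$ to be any non-loop edge with $\mu = 1$: its $1$-reduction is contraction, which preserves both connectedness and the alternating property while dropping the edge count by exactly one. If instead $D$ has no non-loop edge (so $D$ is a bouquet of $m \geq 4$ loops at a single vertex), one chooses an appropriate proper semiloop and applies an $\omega$- or $\omega^{2}$-reduction, then verifies that the result is still connected of size $m - 1$.

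I expect the main obstacle to lie in the bouquet-of-loops subcase of the inductive step: when every edge of $D$ is a loop but none is a triloop, one must show that at least one reduction on a proper semiloop yields a connected minor of size $m - 1$, rather than disconnecting the dimap or producing an ultraloop that would block the induction. This ought to reduce to a careful case analysis based on the definitions of the three reductions given in \cref{sec:reduction_operations}, together with the triality correspondences in Table~\ref{tab:trial_edge_type}, which ensure that the existence of a suitable proper semiloop on one side of triality corresponds to a suitable edge on another.
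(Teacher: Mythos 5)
Your proposal is correct, but it proves the lemma by a genuinely different route from the paper. The paper gives a direct, non-inductive construction: assuming $D$ has no proper triloop, it picks a face $f$, a vertex $v$ on the outer cycle of $f$, and the edges $e_{1},e_{2}$ of that cycle at $v$; it then partitions the remaining in-star $I(v)\setminus e_{1}$ into two arcs $S_{c}$ and $S_{a}$ and applies $\omega^{2}$-reductions to $S_{c}$ and $\omega$-reductions to $S_{a}$, turning $e_{1}$ into a proper triloop while arguing directly that at least three edges survive. You instead induct on $\abs{E(D)}$, peeling off one edge per step by a connectivity-preserving reduction and pushing all the triloop-finding into the size-three base case via \cref{lem:triloop} and your (correct) observation that a connected dimap with at least two edges has no ultraloop, so any triloop it contains is automatically proper. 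Your inductive step does close: if $D$ has a non-loop edge, contraction preserves connectivity, the alternating property, and drops the size by exactly one; and in the bouquet subcase you flag as the main obstacle, every edge is a proper $1$-semiloop (it cannot be its own left successor, as that would make it an $\omega$-loop), so an $\omega$-reduction deletes two edges and adds one, leaving a one-vertex dimap that is trivially connected of size $m-1$ --- the worry about disconnection never materialises because $1$-reduction is the only operation that splits a vertex. What each approach buys: yours is more modular and avoids the somewhat delicate edge-counting in the paper's two cases, at the cost of a connectivity-preservation check; the paper's is constructive and localised at a single vertex, which is the template reused later in the proof of Lemma~\ref{lem:minors}, and it exhibits explicitly which edge becomes the proper triloop.
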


\begin{proof}
	\begin{figure}[H]
		\centering
		\includegraphics[width=0.8\textwidth]{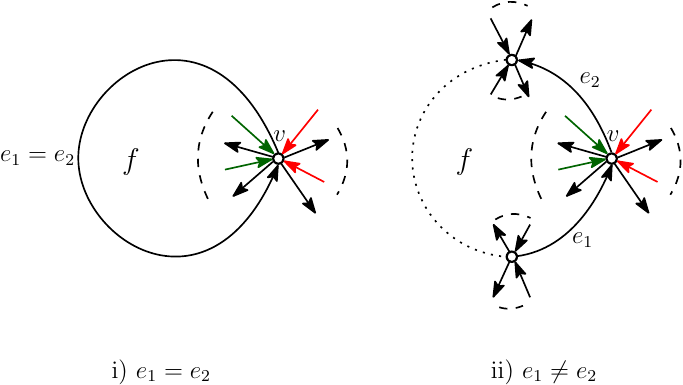}
		\caption{The anticlockwise face $ f $ that has $ C $ as its outer cycle in the proof of Lemma~\ref{lem:proper_triloop}}
		\label{fig:Minor_proper_triloops}
	\end{figure}

	Let $ D $ be as stated. By inspection, $ \abs{E(D)}=3 $	implies that $ D $ contains a proper triloop, so no reduction operation is needed in this case. Now, suppose $ \abs{E(D)}>3 $. Assume there is no proper triloop in $ D $ (else, again, no reduction is needed, and $D$ itself is the required minor).
	
	Every non-empty alternating dimap contains at least two faces. Pick an arbitrary closed trail $ C $ that forms an anticlockwise face $ f $ (or a clockwise face with appropriate modifications in the following steps) in $ D $. Let $ H=D[C] $. Suppose $ R $ is the outer cycle of $ f $ and $ v \in V(R) $. Let $ e_{1},e_{2} \in E(R) $ be the edges that are directed into and out from the vertex $ v $, respectively, and they partition $ T=I(v)\setminus e_{1} $ in $ D $ into two sets  (based on the cyclic order of $ T $), $ (i) $ $ S_{c} $ that contains every edge directed into $ v $ that lies between $ e_{1} $ and $ e_{2} $ as we go from $ e_{1} $ to $ e_{2} $ in clockwise order around $ v $, and $ (ii) $ $ S_{a}=T\setminus S_{c} $ (see Figure~\ref{fig:Minor_proper_triloops}, where edges in $ S_{c} $ and $ S_{a} $ are highlighted in green and red, respectively).
	
	First, suppose $ e_{1}=e_{2} $ (as shown in Figure~\ref{fig:Minor_proper_triloops}(i)). Since there exists no proper triloop in $ D $, both the sets $ S_{c} $ and $ S_{a} $ are not empty. ($ S_{c} $ and $ S_{a} $ being non-empty implies that each of them has size of at least two, by the definition of alternating dimaps and using the fact that there exists no proper triloop in $ D $.) By reducing every edge in $ S_{c} $ by $ \omega^{2} $-reductions, $ e_{1} $ is now a proper $ \omega $-loop and there are at least three edges in the reduced alternating dimap.
	
	Second, suppose $ e_{1} \ne e_{2} $ (as shown in Figure~\ref{fig:Minor_proper_triloops}(ii)). By performing $ \omega^{2} $-reductions on every edge in $ S_{c} $, and $ \omega $-reductions on every edge in $ S_{a} $, the edge $ e_{1} $ is then a proper $ 1 $-loop. Since there exists no proper triloop in $ D $, the edge $ e_{2} $ is not a proper $ 1 $-loop. Hence, there are at least three edges in the reduced alternating dimap.

	Therefore, the result follows.
\end{proof}

A \emph{derived polynomial} for an alternating dimap $ D $ is a polynomial in variables $ w,x,y,z,a,b,c,d,e,f,g,h,i,j,k,l $ obtained as an extended Tutte invariant for $ (D,<) $ where $ < $ is a fixed edge-ordering on $ E(D) $. The $ m! $ permutations of the edge set of an alternating dimap of size $ m $ give $ m! $ derived polynomials, where some of them may be identical.

We write $ G_{n,m} $ for an alternating dimap $ G $ that consists of $ n $ vertices and $ m $ edges such that there exists at least one edge that is not a triloop.

Since there are two non-isomorphic alternating dimaps that may be denoted by $ G_{2,3} $, we write $ G^{a}_{2,3} $ and $ G^{c}_{2,3} $ for the alternating dimap $ G_{2,3} $ that contains one anticlockwise face of size three and one clockwise face of size three, respectively. The possibilities for $ G_{1,3} $ and $ G_{2,3} $ are shown in Figure~\ref{fig:G_{n,m}}.
\vspace{-1cm}
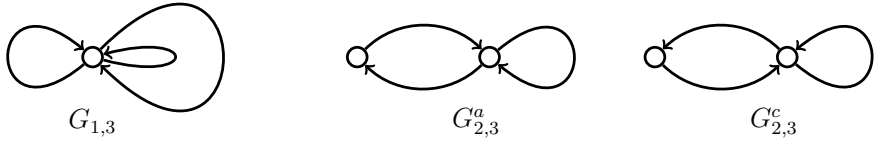
\begin{figure}[H]
	\centering
	\resizebox{0.8\textwidth}{!}
	{%
		\begin{tikzpicture}
		[every path/.style={color=black, line width=1.2pt}, 
		every node/.style={draw, circle, line width=1.2pt, inner sep=3pt},
		%every loop/.style={min distance=2cm, looseness=30},
		bend angle=45]
		
		\node	(0) at (0,0)	{};
		\path	(0)	edge[->, in=140, out=220, looseness=2, min distance=20mm]	(0);
		\path	(0)	edge[->, in=15, out=345, looseness=1, min distance=15mm]	(0);
		\path	(0)	edge[->, in=315, out=45, looseness=2, min distance=35mm]	(0);
		
		\node	(1) at (4,0)	{};
		\node	(2) at (6,0)	{}
			edge[->, bend left]		(1)
			edge[<-, bend right]	(1);
		\path	(2)	edge[->, in=320, out=40, looseness=4, min distance=20mm]	(2);
		
		\node	(3) at (8.5,0)	{};
		\node	(4) at (10.5,0)	{}
			edge[->, bend right]	(3)
			edge[<-, bend left]		(3);
		\path	(4)	edge[<-, in=320, out=40, looseness=4, min distance=20mm]	(4);
		
		\node[draw=none]	(3) at (0,-1)	{$ G_{1,3} $};
		\node[draw=none]	(3) at (5.8,-1)	{$ G^{a}_{2,3} $};
		\node[draw=none]	(4) at (10.3,-1)	{$ G^{c}_{2,3} $};
		\end{tikzpicture}
	}%
	\vspace{-10mm}
	\caption{Alternating dimaps $ G_{1,3} $ and $ G_{2,3} $}
	\label{fig:G_{n,m}}
\end{figure}

We give the derived polynomials for an alternating dimap $ G_{2,4} $ in the following lemma.
\begin{lemma}\label{lem:derived_polynomials_G_{2,4}}
	Let $ P $ be an extended Tutte invariant. There exist exactly 12 distinct derived polynomials for the alternating dimap $ G_{2,4} $ as shown in Figure~\ref{fig:G_{2,4}} , namely
	\begin{enumerate}[listparindent=\parindent]
		\item[E1.]
		$ P(D) = ( jwyz $ \emph{or} $ j(aww+bwy+cwz) )$\\
		\indent \indent $ \! \! + \, ( kwxy $ \emph{or} $ k(gwy+hww+iwx) )$\\
		\indent \indent	$ \! \! + \, ( lwxz $ \emph{or} $ l(dwz+ewx+fww) )$,
		\item[E2.] $ P(D)=wxyz $ or $ awwx+bwxy+cwxz $,
		\item[E3.] $ P(D)=wxyz $ or $ dwyz+ewxy+fwwy $,
		\item[E4.] $ P(D)=wxyz $ or $ gwyz+hwwz+iwxz $.
	\end{enumerate}
\end{lemma}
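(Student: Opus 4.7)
My plan is to enumerate all $4! = 24$ linear orders on $E(G_{2,4})$ and, for each, compute the associated derived polynomial directly from the recursive rules (ETI1)--(ETI8). To set up, I first catalogue the edge types of $G_{2,4}$ (proper edges, proper semiloops of each of the three types, triloops, ultraloops) as they appear in the alternating dimap itself and in every 3-edge minor obtained by a single reduction. This is a finite check requiring only the explicit description of $G_{2,4}$ shown in Figure~\ref{fig:G_{2,4}} together with the definitions from \cref{sec:type_of_loops} and \cref{sec:reduction_operations}. I would then tabulate, for each edge $e$ of $G_{2,4}$ and each $\mu \in \{1,\omega,\omega^{2}\}$, the reduced alternating dimap $G_{2,4}[\mu]e$ along with the types of its three remaining edges.

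Next, for each ordering $<$, I apply the appropriate case of Definition~\ref{def:Extended_Tutte_Invariant} to the first edge $e_{0}$. Cases (ETI1)--(ETI4) return a single scalar multiplier ($w$, $x$, $y$, or $z$), whereas (ETI5)--(ETI8) yield a sum of three terms weighted by one of the triples $(a,b,c)$, $(d,e,f)$, $(g,h,i)$, or $(j,k,l)$. By Lemma~\ref{lem:triloop}, each resulting 3-edge minor contains a triloop that can be reduced next, and the recursion continues until the empty alternating dimap is reached. This collapses each branch into a degree-four monomial in $\{w,x,y,z\}$, possibly preceded by a single semiloop coefficient when $e_{0}$ itself is a semiloop. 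I would then collect the polynomial contributions ordering-by-ordering, group the orderings producing identical derived polynomials, and verify that the 24 orderings yield exactly twelve distinct polynomials, organised into the four expressions E1--E4.

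The main obstacle will be handling the case analysis cleanly: the non-commutativity of the reduction operations means that after reducing an edge $e_{0}$, the surviving edges in $G_{2,4}[\mu]e_{0}$ can have different types depending on $\mu$, so the branching tree of reduction types must be followed carefully. Particular attention is needed to justify the ``or'' alternatives inside each $E_{i}$, which come from comparing an ordering that begins with a semiloop (invoking ETI5--ETI7) with one that begins with a proper edge or triloop; these alternate expressions for the same derived polynomial arise from the fact that both evaluation routes lead to the same branch of the reduction tree after one step, so the equality is a genuine identity among the formal polynomials rather than an assumption about well-definedness of $F$.

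To trim the amount of direct computation, I expect to exploit Theorem~\ref{thm:trial_ETI} and the evident cyclic symmetry of (ETI1)--(ETI8) under the relabellings $(x,y,z)$, $(a,b,c)\leftrightarrow (d,e,f)\leftrightarrow (g,h,i)$ and $(j,k,l)$: the three items E2, E3, and E4 ought to be images of one another under these relabellings, so it suffices to verify one of them directly and then invoke triality for the other two. The final counting step — confirming that the twelve listed polynomials are pairwise distinct as polynomials in the sixteen free variables — is then routine monomial inspection.
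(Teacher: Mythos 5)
Your computational plan --- enumerating the $4!=24$ edge-orderings of $G_{2,4}$, classifying the first edge according to Definition~\ref{def:Extended_Tutte_Invariant}, and recursing through the three-edge minors --- is exactly the approach the paper takes (its proof consists of the observation that the $24$ orderings split into four groups of six according to whether $n$, $o$, $p$ or $q$ is reduced first), and your proposal to obtain E3 and E4 from E2 via Theorem~\ref{thm:trial_ETI} is a reasonable labour-saving device once you check that $(G_{2,4})^{\omega}\cong G_{2,4}$ and track the induced correspondence of edges and orderings.

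There is, however, one genuine error in the middle of your proposal. You claim that the two sides of each ``or'' are ``alternate expressions for the same derived polynomial'' whose equality ``is a genuine identity among the formal polynomials.'' This is false. In E2, for instance, $wxyz$ and $awwx+bwxy+cwxz$ are distinct polynomials in the free parameters; they arise from different orderings of the three edges remaining after $o$ is reduced (the three-edge minor is $G_{1,3}$, and the outcome depends on whether its proper $1$-semiloop or one of its proper $\omega$-/$\omega^{2}$-loops is reduced first). Their distinctness is the entire content of the lemma: it is what makes the count of ``exactly $12$ distinct'' polynomials nontrivial, and it is precisely what forces the constraints (\ref{eqn:necessary_conditions_for_ETI_1})--(\ref{eqn:necessary_conditions_for_ETI_4}) on the parameters in Theorem~\ref{thm:Extended_Tutte_Invariant}. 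If the alternatives were formal identities, that theorem would impose no conditions at all, and your own closing step of verifying that the twelve listed polynomials are pairwise distinct would be contradicted. Relatedly, the alternatives do not come from comparing an ordering that begins with a semiloop against one that begins with a proper edge or triloop: all six orderings within a single $E_{i}$ begin with the \emph{same} edge of $G_{2,4}$, and the branching occurs one level down, in the three-edge minor. Once this is repaired the computation goes through as you describe; be prepared, though, for the fact that the six orderings beginning with $n$ can realise at most six of the eight combinations of ``or''s displayed in E1, so the final tally requires some care.
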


\begin{figure}[ht]
	\centering
	\resizebox{0.5\textwidth}{!}
	{%
		\begin{tikzpicture}
		[every path/.style={color=black, line width=1.2pt}, 
		every node/.style={draw, circle, line width=1.2pt, inner sep=3pt},
		%every loop/.style={min distance=2cm, looseness=30},
		bend angle=35]
		
		\node	(u) at (0,0)	{$ u $};
		\node	(v) at (6,0)	{$ v $}
			edge[->, bend left]	node[draw=none, below]	{$ n $}	(u)
			edge[<-, bend right] node[draw=none, above]	{$ o $}	(u);
		\path	(u)	edge[->, in=140, out=220, looseness=2, min distance=20mm] node[draw=none, left] {$ q $}	(u);
		\path	(u)	edge[->, in=15, out=345, looseness=0, min distance=40mm] node[draw=none, right] {$ p $}	(u); %internal edge
		\end{tikzpicture}
	}%
	\caption{The alternating dimap $ G_{2,4} $}
	\label{fig:G_{2,4}}
\end{figure}
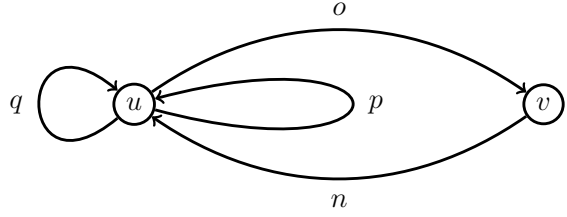
\noindent
\textbf{Remark:} We use ``or"  in (E1)--(E4) to show all the possible derived polynomials when the respective edge is first reduced. For instance, (E1) gives eight derived polynomials in total.

\begin{proof}
	Since the alternating dimap $ D \cong G_{2,4} $ has four edges, there exist $ 4!=24 $ possible edge-orderings. To obtain the derived polynomials as in (E1)--(E4), the first edge to be reduced in $ D $ is $ n,o,p $ and $ q $, respectively.
\end{proof}

\begin{lemma}\label{lem:derived_polynomials_G_{1,3}_G^{a}_{2,3}_G^{c}_{2,3}}
	Let $ P $ be an extended Tutte invariant.
	\begin{enumerate}
		\item[a)] The only distinct derived polynomials for the alternating dimap $ G_{1,3} $ are
		\begin{equation*}
			P(D)=wyz \quad \text{and} \quad P(D)=aww+bwy+cwz.
		\end{equation*}
		
		\item[b)] The only distinct derived polynomials for the alternating dimap $ G^{a}_{2,3} $ are
		\begin{equation*}
			P(D)=wxz \quad \text{and} \quad P(D)=dwz+ewx+fww.
		\end{equation*}
		
		\item[c)] The only distinct derived polynomials for the alternating dimap $ G^{c}_{2,3} $ are
		\begin{equation*}
			P(D)=wxy \quad \text{and} \quad P(D)=gwy+hww+iwx.
		\end{equation*}
	\end{enumerate}
\end{lemma}

\begin{proof}
	Direct calculation.
\end{proof}

Since the final edge to be reduced in each component of any non-empty alternating dimap $ D $ is always an ultraloop, we have $ P(D) \equiv 0 $ if $ w=0 $. Hence, we assume that $ w \ne 0 $ hereinafter.

As shown in Lemma~\ref{lem:proper_triloop}, every connected alternating dimap $ D $ of genus zero that has size at least three has a minor that contains a proper triloop. By Definition~\ref{def:Extended_Tutte_Invariant}, one variable $ x,y $ or $ z $ is produced if the proper triloop in $ D $ is reduced first. If that variable equals zero, a trivial solution will then be obtained. Hence, we first characterise extended Tutte invariants for alternating dimaps of genus zero under the assumption that $ x,y,z \ne 0 $.

\begin{theorem}\label{thm:Extended_Tutte_Invariant}
	Let $ S=(w,x,y,z,a,b,c,d,e,f,g,h,i,j,k,l) $ be a parameter sequence such that $ w,x,y,z \ne 0 $. A function $ P $ is an extended Tutte invariant with respect to $ S $ for every alternating dimap $ D $ of genus zero if and only if
	\begin{equation}\label{eqn:Extended_Tutte_Invariant}
		P(D)=w^{k(D)} \cdot x^{\instar(D)-k(D)} \cdot y^{\aface(D)-k(D)} \cdot z^{\cface(D)-k(D)}
	\end{equation}
	with
	\begin{equation}\label{eqn:necessary_conditions_for_ETI_1}
		xyz=jyz+kxy+lxz,
	\end{equation}	
	\begin{equation}\label{eqn:necessary_conditions_for_ETI_2}
		yz=aw+by+cz,
	\end{equation}	
	\begin{equation}\label{eqn:necessary_conditions_for_ETI_3}
		xz=dz+ex+fw,
	\end{equation}	
	\begin{equation}\label{eqn:necessary_conditions_for_ETI_4}
		xy=gy+hw+ix.
	\end{equation}
\end{theorem}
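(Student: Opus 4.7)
My plan is to prove the two directions of this equivalence separately. For \emph{necessity}, I would first extract the four relations (\ref{eqn:necessary_conditions_for_ETI_1})--(\ref{eqn:necessary_conditions_for_ETI_4}) by applying $F$ to the small alternating dimaps $G_{1,3}$, $G^{a}_{2,3}$, $G^{c}_{2,3}$ and $G_{2,4}$ just introduced. Corollary~\ref{cor:derived_polynomials_G_{1,3}_G^{a}_{2,3}_G^{c}_{2,3}} and Lemma~\ref{lem:derived_polynomials_G_{2,4}} list, for each of these four alternating dimaps, several derived polynomials obtained from different edge-orderings; since $F$ depends only on $D$ and not on the order, all of these expressions must coincide. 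Equating them and cancelling the common factor of $w \ne 0$ yields (\ref{eqn:necessary_conditions_for_ETI_2})--(\ref{eqn:necessary_conditions_for_ETI_4}) from the three three-edge examples and (\ref{eqn:necessary_conditions_for_ETI_1}) from $G_{2,4}$.

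I would then establish the closed form (\ref{eqn:Extended_Tutte_Invariant}) by induction on $\abs{E(D)}$. The base case $D = \emptyset$ holds because $F(\emptyset) = 1$ and every count on the right-hand side is zero. For the inductive step, if $D$ contains an isolated ultraloop I would apply ETI1, and otherwise invoke Lemmas~\ref{lem:triloop} and~\ref{lem:proper_triloop} to locate a sequence of reductions resulting in an alternating dimap that contains a (proper) triloop. Reducing that triloop by ETI2, ETI3 or ETI4 multiplies $F(D)$ by $x$, $y$ or $z$ respectively; matching this against the inductive hypothesis reduces to verifying how the four counts move under the reduction, namely that a 1-loop contraction decrements only $\instar(D)$, an $\omega$-loop deletion decrements only $\aface(D)$, an $\omega^{2}$-loop deletion decrements only $\cface(D)$, and an ultraloop deletion decreases $k(D)$, $\instar(D)$, $\aface(D)$ and $\cface(D)$ simultaneously by one.

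For \emph{sufficiency}, I would assume the closed form together with the four relations and verify the eight axioms directly. Multiplicativity and isomorphism-invariance are immediate from additivity of $k$, $\instar$, $\aface$, $\cface$ over disjoint unions together with $F(\emptyset) = 1$. Axioms ETI1--ETI4 follow from the same count-tracking as above. For ETI5--ETI8, each of the three reduction operations $D[1]r$, $D[\omega]r$ and $D[\omega^{2}]r$ changes $F$ by some monomial ratio in $w, x, y, z$, so the desired linear combination, after cancelling a common monomial, collapses to one of the four relations (\ref{eqn:necessary_conditions_for_ETI_1})--(\ref{eqn:necessary_conditions_for_ETI_4}).

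I expect the main obstacle to be the sufficiency analysis for ETI5--ETI8: the 1-reduction of a semiloop can either increase $k(D)$ by one or decrease the genus, and analogous ambiguities arise for the $\omega$- and $\omega^{2}$-reductions of proper edges, so the tuple $(k, \instar, \aface, \cface)$ can move in several different patterns. The virtue is that the four relations were extracted from precisely the minimal witnesses of each case, so each subcase should collapse to the appropriate relation; but tracking how the face and in-star counts shift when a vertex is split, or when two edges are identified around a semiloop, will require careful topological bookkeeping.
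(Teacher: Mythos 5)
Your extraction of the four relations from the small alternating dimaps and your plan for the sufficiency direction both match the paper's proof in substance (the paper in fact reads all four relations off $G_{2,4}$ alone, via (E1)--(E4) of Lemma~\ref{lem:derived_polynomials_G_{2,4}}; using Corollary~\ref{cor:derived_polynomials_G_{1,3}_G^{a}_{2,3}_G^{c}_{2,3}} as well is equally fine). The genuine gap is in the inductive step you propose for establishing the closed form (\ref{eqn:Extended_Tutte_Invariant}). You say that when $D$ has no ultraloop you would ``invoke Lemmas~\ref{lem:triloop} and~\ref{lem:proper_triloop} to locate a sequence of reductions resulting in an alternating dimap that contains a (proper) triloop'' and then reduce that triloop by ETI2--ETI4. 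This does not work: the triloop produced by Lemma~\ref{lem:proper_triloop} lives in a \emph{minor} of $D$, not in $D$ itself, so reducing it yields no recurrence relating $F(D)$ to $F$ of a smaller dimap; and an alternating dimap need not contain any triloop at all (every edge may be a proper edge or a proper semiloop), so you cannot always enter the induction through ETI1--ETI4. The paper's induction instead takes an \emph{arbitrary} edge $r$ of $D$ and splits into the eight cases of Definition~\ref{def:Extended_Tutte_Invariant}; in the semiloop and proper-edge cases the three-term recurrences ETI5--ETI8 are collapsed to the target monomial using precisely the relations (\ref{eqn:necessary_conditions_for_ETI_1})--(\ref{eqn:necessary_conditions_for_ETI_4}) (for instance, for a proper $1$-semiloop, $k$ and $\instar$ both increase by one under $[1]$, $\cface$ drops by one under $[\omega]$, $\aface$ drops by one under $[\omega^{2}]$, and the sum factors as $(aw+by+cz)/(yz)=1$). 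You have in effect written this computation into your sufficiency paragraph; it must also carry the necessity induction, which your plan as stated does not do.

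One further remark: your worry about the $1$-reduction of a semiloop either increasing $k(D)$ or decreasing the genus is well founded. The paper's case analysis simply asserts that the component count increases by one, which is the plane situation; for arbitrary orientable genus one would have to check that the tuple $(k,\instar,\aface,\cface)$ moves so that the same relation is invoked in every subcase, which is exactly the ``topological bookkeeping'' you flag.
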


\begin{proof}
	Let $ S $ be as stated and $ D $ be an alternating dimap of genus zero. We first prove the forward implication. Note that all the derived polynomials must be equal for $ P(D) $ to be an extended Tutte invariant. By (E1)--(E4) in Lemma~\ref{lem:derived_polynomials_G_{2,4}} and as $ w,x,y,z \ne 0 $, we obtain (\ref{eqn:necessary_conditions_for_ETI_1})--(\ref{eqn:necessary_conditions_for_ETI_4}) as desired. For example, by equating two of the 12 derived polynomials $ jwyz + kwxy + lwxz $ (from E1) and $ wxyz $ (from E2) in Lemma~\ref{lem:derived_polynomials_G_{2,4}}, and divide both sides of the equation by $ w \ne 0 $, we obtain (\ref{eqn:necessary_conditions_for_ETI_1}).
	
	We next show \[ P(D)=w^{k(D)} \cdot x^{\instar(D)-k(D)} \cdot y^{\aface(D)-k(D)} \cdot z^{\cface(D)-k(D)}, \] using induction on $ \abs{E(D)}=m $. There exist eight cases corresponding to the eight categories ((\hyperref[def:Extended_Tutte_Invariant]{ETI1}) to (\hyperref[def:Extended_Tutte_Invariant]{ETI8})) in Definition~\ref{def:Extended_Tutte_Invariant}. For the base case, suppose $ m=0 $. Clearly, $ P(D)=1 $ and the result follows. Assume that $ m>0 $ and the result holds for every alternating dimap of genus zero that has size less than $ m $. Let $ r \in E(D) $.
	\begin{enumerate}
		\item[i)] \textbf{$ r $ is an ultraloop}. In $ D\setminus r $, the number of components, in-stars, a-faces and c-faces are all reduced by 1. Thus,
		\begin{align*}
		P(D) & = w \cdot P(D\setminus r)\\
			& = w \cdot w^{k(D\setminus r)} \cdot 
			x^{\instar(D\setminus r)-k(D\setminus r)} \cdot 
			y^{\aface(D\setminus r) -k(D\setminus r)} \cdot 
			z^{\cface(D\setminus r)-k(D\setminus r)} \tag{by the inductive hypothesis}\\
			& = w \cdot w^{k(D)-1} \cdot 
			x^{\instar(D)-1-(k(D)-1)} \cdot 
			y^{\aface(D)-1-(k(D)-1)} \cdot 
			z^{\cface(D)-1-(k(D)-1)}\\
			& = w^{k(D)} \cdot x^{\instar(D)-k(D)} \cdot y^{\aface(D)-k(D)} \cdot z^{\cface(D)-k(D)}.
		\end{align*}
		
		\item[ii)] \textbf{$ r $ is a proper $ 1 $-loop}. The number of in-stars is reduced by 1 in $ D[1]r $. Then,
		\begin{align*}
		P(D) & = x \cdot P(D[1]r)\\
			& = x \cdot w^{k(D[1]r)} \cdot 
			x^{\instar(D[1]r)-k(D[1]r)} \cdot 
			y^{\aface(D[1]r)-k(D[1]r)} \cdot 
			z^{\cface(D[1]r)-k(D[1]r)} \tag{by the inductive hypothesis}\\
			& = x \cdot w^{k(D)} \cdot 
			x^{\instar(D)-1-k(D)} \cdot 
			y^{\aface(D)-k(D)} \cdot 
			z^{\cface(D)-k(D)}\\
			& = w^{k(D)} \cdot x^{\instar(D)-k(D)} \cdot y^{\aface(D)-k(D)} \cdot z^{\cface(D)-k(D)}.
		\end{align*}

		\item[iii)] \textbf{$ r $ is a proper $ 1 $-semiloop}. In $ D[1]r $, the number of components and in-stars are both increased by 1. In $ D[\omega]r $ and $ D[\omega^{2}]r $, the number of c-faces and a-faces are reduced by 1, respectively. Hence,
		\begin{align*}
		yz \cdot P(D) & = yz \cdot \Big( a \cdot P(D[1]r) + b \cdot P(D[\omega]r) + c \cdot P(D[\omega^{2}]r) \Big)\\
			& = yz \cdot \Big( a \cdot w^{k(D[1]r)} \cdot 
			x^{\instar(D[1]r)-k(D[1]r)} \cdot 
			y^{\aface(D[1]r)-k(D[1]r)} \cdot 
			z^{\cface(D[1]r)-k(D[1]r)}\\
			& \quad + b \cdot w^{k(D[\omega]r)} \cdot 
			x^{\instar(D[\omega]r)-k(D[\omega]r)} \cdot 
			y^{\aface(D[\omega]r)-k(D[\omega]r)} \cdot 
			z^{\cface(D[\omega]r)-k(D[\omega]r)}\\
			& \quad + c \cdot w^{k(D[\omega^{2}]r)} \cdot 
			x^{\instar(D[\omega^{2}]r)-k(D[\omega^{2}]r)} \cdot 
			y^{\aface(D[\omega^{2}]r)-k(D[\omega^{2}]r)} \cdot 
			z^{\cface(D[\omega^{2}]r)-k(D[\omega^{2}]r)} \Big) \tag{by the inductive hypothesis}\\
			& = yz \cdot \Big( a \cdot w^{k(D)+1} \cdot x^{\instar(D)+1-(k(D)+1)} \cdot y^{\aface(D)-(k(D)+1)} \cdot z^{\cface(D)-(k(D)+1)}\\
			& \quad + b \cdot w^{k(D)} \cdot x^{\instar(D)-k(D)} \cdot y^{\aface(D)-k(D)} \cdot z^{\cface(D)-1-k(D)}\\
			& \quad + c \cdot w^{k(D)} \cdot x^{\instar(D)-k(D)} \cdot y^{\aface(D)-1-k(D)} \cdot z^{\cface(D)-k(D)} \Big)\\
			& = \left( aw + by + cz \right) \cdot 
			w^{k(D)} \cdot x^{\instar(D)-k(D)} \cdot y^{\aface(D)-k(D)} \cdot z^{\cface(D)-k(D)}\\
		P(D) & = w^{k(D)} \cdot x^{\instar(D)-k(D)} \cdot y^{\aface(D)-k(D)} \cdot z^{\cface(D)-k(D)} \tag[.]{by (\ref{eqn:necessary_conditions_for_ETI_2})}
		\end{align*}

		\item[iv)] \textbf{$ r $ is a proper edge}. Observe that the number of in-stars, c-faces and a-faces, are all reduced by 1 in $ D[1]r $, $ D[\omega]r $ and $ D[\omega^{2}]r $, respectively. Therefore, we have,
		\begin{align*}
		xyz \cdot P(D) & = xyz \cdot \Big( j \cdot P(D[1]r) + k \cdot P(D[\omega]r) + l \cdot P(D[\omega^{2}]r) \Big)\\
			& = xyz \cdot \Big( j \cdot w^{k(D[1]r)} \cdot 
			x^{\instar(D[1]r)-k(D[1]r)} \cdot 
			y^{\aface(D[1]r)-k(D[1]r)} \cdot 
			z^{\cface(D[1]r)-k(D[1]r)}\\
			& \quad + k \cdot w^{k(D[\omega]r)} \cdot 
			x^{\instar(D[\omega]r)-k(D[\omega]r)} \cdot 
			y^{\aface(D[\omega]r)-k(D[\omega]r)} \cdot 
			z^{\cface(D[\omega]r)-k(D[\omega]r)}\\
			& \quad + l \cdot w^{k(D[\omega^{2}]r)} \cdot 
			x^{\instar(D[\omega^{2}]r)-k(D[\omega^{2}]r)} \cdot 
			y^{\aface(D[\omega^{2}]r)-k(D[\omega^{2}]r)} \cdot 
			z^{\cface(D[\omega^{2}]r)-k(D[\omega^{2}]r)} \Big) \tag{by the inductive hypothesis}\\
			& = xyz \cdot \Big( j \cdot w^{k(D)} \cdot x^{\instar(D)-1-k(D)} \cdot y^{\aface(D)-k(D)} \cdot z^{\cface(D)-k(D)}\\
			& \quad + k \cdot w^{k(D)} \cdot x^{\instar(D)-k(D)} \cdot y^{\aface(D)-k(D)} \cdot z^{\cface(D)-1-k(D)}\\
			& \quad + l \cdot w^{k(D)} \cdot x^{\instar(D)-k(D)} \cdot y^{\aface(D)-1-k(D)} \cdot z^{\cface(D)-k(D)} \Big)\\
			& = \left( jyz+kxy+lxz \right) \cdot
			w^{k(D)} \cdot x^{\instar(D)-k(D)} \cdot y^{\aface(D)-k(D)} \cdot z^{\cface(D)-k(D)}\\
		P(D) & = w^{k(D)} \cdot x^{\instar(D)-k(D)} \cdot y^{\aface(D)-k(D)} \cdot z^{\cface(D)-k(D)} \tag[.]{by (\ref{eqn:necessary_conditions_for_ETI_1})}
		\end{align*}
	\end{enumerate}
	
	The arguments for the other four cases (proper $ \omega $-loop/$ \omega^{2} $-loop/$ \omega $-semiloop/$ \omega^{2} $-semiloop) are similar, and are given in full in~\cite{YowPHD2019}.
	
	Conversely, it must be shown that \[ P(D)=w^{k(D)} \cdot x^{\instar(D)-k(D)} \cdot y^{\aface(D)-k(D)} \cdot z^{\cface(D)-k(D)} \] with (\ref{eqn:necessary_conditions_for_ETI_1})--(\ref{eqn:necessary_conditions_for_ETI_4}) is an extended Tutte invariant with respect to $ S $ for every alternating dimap $ D $, as in Definition~\ref{def:Extended_Tutte_Invariant}. It is routine to show that $ P(D) $ is a multiplicative invariant.

	The proof that $ P(D) $ satisfies (\hyperref[def:Extended_Tutte_Invariant]{ETI1}) to (\hyperref[def:Extended_Tutte_Invariant]{ETI8}) uses induction on $ \abs{E(D)}=m $. The approach is similar in style to the above induction. See~\cite{YowPHD2019} for details.
\end{proof}

As seen in the proof of Theorem~\ref{thm:Extended_Tutte_Invariant}, variables $ x,y $ and $ z $ must be non-zero in order to complete the characterisation. We next consider cases where at least one of these three variables is zero, using different arguments. We shall first establish some excluded minor characterisations of alternating dimaps of genus zero.

\begin{lemma}\label{lem:bigger_c-face_to_alt_c}
	Let $ D $ be an alternating dimap. If every clockwise face of $ D $ has size at least two, then $ D $ can be reduced to an alternating dimap that contains $ \cface(D) $ clockwise faces of size exactly two, using a sequence of contraction operations.
\end{lemma}

\begin{proof}		
	Let $ D $ be as stated. We proceed by induction on $ \abs{E(D)}=m $. For the base case, suppose $ m=2 $. Since there is a clockwise face of size exactly two, the result for $ m=2 $ follows.
	
	For the inductive step, assume that $ m>2 $ and the result holds for every alternating dimap of size less than $ m $.
	
	Suppose $ k=\cface(D) $, and $ g \in F(D) $ is a clockwise face of size greater than two. Let $ e \in E(\partial g) $. By contracting the edge $ e $, the size of $ g $ will be reduced by one. As every edge in an alternating dimap belongs to one clockwise face and one anticlockwise face, we have $ \cface(D/e)=k $.  By the inductive hypothesis, the alternating dimap $ D/e $ can be reduced to an alternating dimap that contains $ k $ clockwise faces of size exactly two, using a sequence of contraction operations. Therefore, alternating dimap $ D $ can be reduced to an alternating dimap that contains $ k $ clockwise faces of size exactly two by a sequence of contraction operations, namely, contraction of $ e $ followed by the aforementioned contraction sequence for $ D/e $.
\end{proof}

\begin{lemma}\label{lem:bigger_a-face_to_alt_a}
	Let $ D $ be an alternating dimap. If every anticlockwise face of $ D $ has size at least two, then $ D $ can be reduced to an alternating dimap that contains $ \aface(D) $ anticlockwise faces of size exactly two, using a sequence of contraction operations.
\end{lemma}

\begin{proof}
	The result follows by some appropriate modifications to the proof of Lemma~\ref{lem:bigger_c-face_to_alt_c}.
\end{proof}

We show that $ G_{1,3} $, $ G^{a}_{2,3} $ or $ G^{c}_{2,3} $ (see Figure~\ref{fig:G_{n,m}}) is a minor for certain alternating dimaps, in the following lemmas.
\begin{lemma}\label{lem:minors_G_13}
	Every alternating dimap of genus zero that contains a proper $ 1 $-semiloop has $ G_{1,3} $ as a minor.
\end{lemma}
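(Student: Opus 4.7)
The plan is to induct on $|E(D)|$, using a key structural observation about the degree of the vertex incident with the proper 1-semiloop.

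\textbf{Structural setup.} Let $e$ be a proper 1-semiloop at vertex $v$. Following the notation from the 1-reduction definition, write the cyclic order of edges at $v$ as $(e_{\text{out}}, a_1, b_1, \ldots, a_s, b_s, e_{\text{in}}, c_1, d_1, \ldots, c_t, d_t)$. Because $e$ is not an $\omega$-loop, $e$ is not its own left successor, forcing $s \geq 1$. Because $e$ is not an $\omega^2$-loop, $e$ is not its own right successor, forcing $t \geq 1$. These give $\deg(v) = 2 + 2s + 2t \geq 6$, and $e$ being not a 1-loop is then automatic.

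\textbf{Base case.} If $|E(D)| = 3$, the total edge-end count is $6$. Combined with $\deg(v) \geq 6$, all three edges must be loops at $v$, so $D$ consists of a single vertex with three loops. Since $e$ is a proper 1-semiloop (hence not a triloop), $D$ is an instance of $G_{1,3}$ and the result is immediate.

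\textbf{Inductive step.} For $|E(D)| > 3$, I would exhibit a reduction $D' = D[\mu]f$ with $f \neq e$ such that $D'$ still contains a proper 1-semiloop; the inductive hypothesis then produces $G_{1,3}$ as a minor of $D'$, hence of $D$. I would split into cases:
\begin{itemize}
\item If $D$ has an edge $f$ not incident with $v$, reducing $f$ (by contraction if non-loop, or by deletion if a triloop) leaves the cyclic structure at $v$ and the faces through $e$ unchanged, so $e$ remains a proper 1-semiloop in $D'$.
\item If every non-$e$ edge is incident with $v$ but $D$ contains a proper triloop $f \neq e$, reducing $f$ removes only $f$ (it is deleted as an $\omega$- or $\omega^{2}$-loop, or contracted if it is a 1-loop whose head is distinct from $v$), again preserving $e$.
\item In the remaining tightly constrained case, a careful analysis of the cyclic order at $v$ (with $s, t \geq 1$) shows that one can always choose a reduction of some edge $f \neq e$ that keeps both $s \geq 1$ and $t \geq 1$ in the reduced dimap, or else produces a new proper 1-semiloop in $D'$.
\end{itemize}

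\textbf{Main obstacle.} The hardest part is the third case of the inductive step: when every candidate $f$ is interlocked with $e$ via the cyclic order at $v$ or shares a face with $e$, so that reductions threaten to turn $e$ into an $\omega$-loop, $\omega^{2}$-loop, or 1-loop. The crux is to exploit the inequality $s + t \geq 2$ and the face structure of $e$ to guarantee that at least one of the three reduction operations applied to at least one non-$e$ edge yields a $D'$ still containing a proper 1-semiloop. This requires tracking, in each reduction rule, how the left and right successors of $e$ and the degree of $v$ change, which is the technical heart of the argument.
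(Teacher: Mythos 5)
Your structural observation that a proper $1$-semiloop $e$ at $v$ forces $s,t\ge 1$ and hence $\deg(v)\ge 6$ is correct, and your base case is fine, but the inductive step has a genuine gap exactly where you flag the ``main obstacle'': your third case is asserted, not proved. What is missing is the one fact that makes any version of this argument work, namely a reason why reducing an edge $f\ne e$ cannot turn $e$ into a triloop. That reason is: the boundary of a face is a closed trail, so a face of size two whose boundary contains a loop must consist of two loops; hence any face of $e$ that contains a non-loop edge has size at least three, and contracting a non-loop edge shrinks each face of $e$ by at most one and therefore never to size one, while $\deg(v)$ does not decrease. With this in hand your ``tightly constrained'' case dissolves: if some edge incident with $v$ is a non-loop, contract it and apply the hypothesis; if every edge of $e$'s component is a loop at the single vertex $v$, then Euler's formula forces a face of size one, i.e.\ a proper $\omega$- or $\omega^{2}$-loop, whose deletion only merges faces and so cannot make $e$ a triloop. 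As written, though, the proposal leaves precisely the crux open and only promises that ``a careful analysis'' will close it.

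For comparison, the paper inducts on $\abs{V(D)}$ rather than $\abs{E(D)}$: if the component containing $e$ has at least two vertices it contracts an arbitrary non-loop edge of that component (not necessarily at $v$) and applies the hypothesis to the contraction; the single-vertex case is the base case, settled by repeatedly deleting proper triloops until $G_{1,3}$ remains. That choice of induction parameter turns your problematic case into the (comparatively easy) base case and avoids any interlocking analysis of the rotation at $v$. The paper is equally silent about why contraction preserves the proper $1$-semiloop, so that verification is needed on either route; the difference is that once it is supplied the paper's remaining steps are routine, whereas your case (iii) still requires the successor-tracking you defer.
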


\begin{proof}
	Let $ D $ be an alternating dimap of genus zero that contains a proper $ 1 $-semiloop $ e $. We proceed by induction on $ \abs{V(D)}=n $. For the base case, suppose $ n=1 $. The alternating dimap $ G_{1,3} $ can be obtained by repeatedly reducing some proper triloops.
	
	For the inductive step, assume that $ n>1 $ and the result holds for every $ D $ that has less than $ n $ vertices. If $ e $ belongs to a component that has exactly one vertex in $ D $, from the base case, the alternating dimap $ D $ contains $ G_{1,3} $ as a minor. So, suppose $ e $ belongs to a component $ P $ that has at least two vertices in $ D $. This implies that there exists at least one non-loop edge $ f $ in $ P $. By contracting $ f $, we have $ \abs{V(D/f)}=n-1 $. By the inductive hypothesis, the alternating dimap $ D/f $ contains $ G_{1,3} $ as a minor. Since $ D/f $ is a minor of $ D $, the result follows.
\end{proof}

Since $ G_{1,3} $ contains a proper $ \omega $-loop and a proper $ \omega^{2} $-loop, the following corollary follows from Definition~\ref{def:Extended_Tutte_Invariant}.
\begin{corollary}\label{cor:1-semiloop}
	If $ y=0 $ or $ z=0 $, and there exists a proper $ 1 $-semiloop in an alternating dimap $ D $ of genus zero, then $ P(D)=0 $. \qed
\end{corollary}

\begin{lemma}\label{lem:minors_G_23a}
	Every alternating dimap of genus zero that contains a proper $ \omega $-semiloop has $ G^{a}_{2,3} $ as a minor.
\end{lemma}

\begin{proof}
	From Table~\ref{tab:trial_edge_type}, we can see that a proper $ \omega $-semiloop can be obtained from a proper $ 1 $-semiloop $ e $ by applying the trial operation on $ e $ once. Likewise, we have $ (G_{1,3})^{\omega}=G^{a}_{2,3} $. Therefore, by triality and Lemma~\ref{lem:minors_G_13}, we complete the proof.
\end{proof}

Since $ G^{a}_{2,3} $ contains a proper $ 1 $-loop and a proper $ \omega^{2} $-loop, the following corollary follows from Definition~\ref{def:Extended_Tutte_Invariant}.
\begin{corollary}\label{cor:omega-semiloop}
	If $ x=0 $ or $ z=0 $, and there exists a proper $ \omega $-semiloop in an alternating dimap $ D $ of genus zero, then $ P(D)=0 $. \qed
\end{corollary}

\begin{lemma}\label{lem:minors_G_23c}
	Every alternating dimap of genus zero that contains a proper $ \omega^{2} $-semiloop has $ G^{c}_{2,3} $ as a minor.
\end{lemma}

\begin{proof}
	By triality and Lemma~\ref{lem:minors_G_23a} (or Lemma~\ref{lem:minors_G_13}).
\end{proof}

Since $ G^{c}_{2,3} $ contains a proper $ 1 $-loop and a proper $ \omega $-loop, the following corollary follows from Definition~\ref{def:Extended_Tutte_Invariant}.
\begin{corollary}\label{cor:omega2-semiloop}
	If $ x=0 $ or $ y=0 $, and there exists a proper $ \omega^{2} $-semiloop in an alternating dimap $ D $ of genus zero, then $ P(D)=0 $. \qed
\end{corollary}

\begin{lemma}\label{lem:minors}
	Every alternating dimap of genus zero that contains a proper edge has $ G_{1,3} $, $ G^{a}_{2,3} $ and $ G^{c}_{2,3} $ as minors.
\end{lemma}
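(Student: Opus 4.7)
The plan is to combine the three preceding lemmas with a triality reduction. Lemmas~\ref{lem:minors_G_13}, \ref{lem:minors_G_23a} and \ref{lem:minors_G_23c} say that an alternating dimap containing a proper $\mu$-semiloop (for $\mu \in \{1,\omega,\omega^{2}\}$) has the corresponding $G_{1,3}$, $G^{a}_{2,3}$, or $G^{c}_{2,3}$ as a minor, so it suffices to show that $D$ has three minors, one containing a proper $\mu$-semiloop for each of the three values of $\mu$. By Table~\ref{tab:trial_edge_type} the property of being a proper edge is preserved by triality, while the three proper semiloop types are cyclically permuted; combined with Theorem~\ref{thm:triality_minor}, this reduces the task to a single claim: \emph{if $D$ contains a proper edge, then some minor of $D$ contains a proper $1$-semiloop}. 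The cases for $\omega$- and $\omega^{2}$-semiloops follow by applying this single claim to $D^{\omega^{2}}$ and $D^{\omega}$ (each of which still contains a proper edge) and pulling the resulting $G_{1,3}$-minor back to $D$ as $(G_{1,3})^{\omega}=G^{a}_{2,3}$ and $(G_{1,3})^{\omega^{2}}=G^{c}_{2,3}$, exactly as in the proofs of Lemmas~\ref{lem:minors_G_23a} and \ref{lem:minors_G_23c}.

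For the main claim, let $e=uv$ be a proper edge of $D$. Since $e$ is not an $\omega$- or $\omega^{2}$-semiloop, $e$ together with its left (respectively right) successor neither forms a cutset of $D$ nor reduces its genus; in particular $u$ and $v$ are joined in $D$ by a trail $P$ avoiding $e$. My plan is to contract the non-loop edges of $P$ one by one via $1$-reductions, which are ordinary contractions and preserve the alternating structure. After these contractions, $u$ and $v$ are identified into a single vertex $v^{\ast}$ in the resulting minor $D'$, so $e$ becomes a loop at $v^{\ast}$.

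The main obstacle is certifying that $e$ is actually a \emph{proper} $1$-semiloop in $D'$, that is, not a triloop. If $e$ were a $1$-loop in $D'$ then $v^{\ast}$ would have degree $2$, which pulls back to the statement that in $D$ the endpoints $u$ and $v$ connect to the rest of the graph only through $e$ together with at most one successor edge; this contradicts the proper-edge hypothesis via the $\omega$- or $\omega^{2}$-semiloop conditions. If instead $e$ were a proper $\omega$- or $\omega^{2}$-loop in $D'$, then pulling back through the contractions shows that $e$ together with its left or right successor in $D$ either forms a $2$-edge cutset of $D$ or lowers its genus, again contradicting the hypothesis. By choosing $P$ minimally, these degeneracies can be avoided, so $e$ is a proper $1$-semiloop in $D'$. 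Lemma~\ref{lem:minors_G_13} then yields $G_{1,3}$ as a minor of $D$, and the triality reduction above completes the proof.
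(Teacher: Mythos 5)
There is a genuine gap in the central claim that contracting a trail $P$ from $v$ to $u$ turns the proper edge $e=uv$ into a \emph{proper} $1$-semiloop. The obstruction you try to dismiss is real: if $E(P)$ happens to contain all the other edges of the clockwise (or anticlockwise) face of $e$, then after the contractions that face has size one and $e$ is an $\omega^{2}$-loop (or $\omega$-loop), i.e.\ a triloop, and Lemma~\ref{lem:minors_G_13} does not apply. Your proposed rescue --- that this would ``pull back'' to $e$ and a successor forming a cutset or lowering the genus of $D$ --- is false, because contraction changes the face structure and cutset/genus properties do not transfer back in that way. Concretely, take $D=\hbox{alt}_c(K_{3})$ and $e=uv$ one edge of a digon: $e$ is a proper edge ($\{e,vu\}$ is not a cutset of $D$ and the genus is already zero), yet the minimal trail from $v$ to $u$ is the single partner edge $vu$, and contracting it collapses the clockwise face $\{e,vu\}$ to $\{e\}$, making $e$ an $\omega^{2}$-loop. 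So ``choosing $P$ minimally'' makes matters worse rather than avoiding the degeneracy; what one must actually control is that the left and right successors of $e$ (equivalently, at least one further edge of each of its two faces) survive the contractions, and that the merged vertex keeps degree greater than two. A correct choice of edges to contract exists in this example, but it is not a directed trail from $v$ to $u$ and is not the minimal one, so the proposal as written does not establish the claim.

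The triality scaffolding in your first paragraph is sound and matches the spirit of the paper's Lemmas~\ref{lem:minors_G_23a} and~\ref{lem:minors_G_23c}, but the paper does not obtain the three minors by contracting an arbitrary connecting trail. Instead it works with the outer cycle $C$ of a face containing $e$: in the case where $C$ is the only routing between its vertices it verifies that neither the left nor the right successor of $e$ lies on $C$ (otherwise $e$ would already be a semiloop), so that contracting $E(C)\setminus\{e\}$ provably leaves $e$ a proper $1$-semiloop with $\deg(u)\ge 6$; and it produces the $\omega$- and $\omega^{2}$-semiloops not by contraction at all, but by $\omega$- and $\omega^{2}$-reducing the two arcs $S_{c}$ and $S_{a}$ into which $e$ and the outgoing edge of $C$ split the in-star at the head. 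Some argument of this kind, guaranteeing that $e$'s two faces and its head's degree do not degenerate, is exactly what your proof is missing.
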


\begin{proof}
	\begin{figure}[ht]
		\centering
		\includegraphics[width=0.85\textwidth]{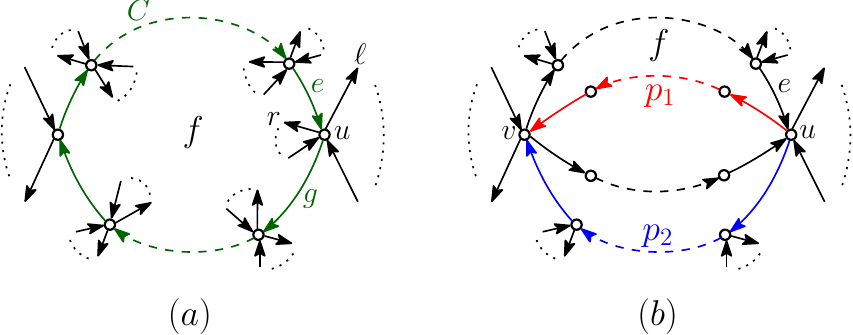}
		\caption{The clockwise face $ f $ in the proof of Lemma~\ref{lem:minors}}
		\label{fig:Minor_general_edge}
	\end{figure}

	Suppose $ D $ is an alternating dimap of genus zero. Let $ e \in E(D) $ be a proper edge that has  $ u \in V(D) $ as its head. Suppose $ f $ is a clockwise face (or an anticlockwise face with appropriate modifications) that contains $ e $, and $ C $ is the outer cycle of $ f $ in $ D $. We consider two cases as follows:
	\begin{enumerate}
		\item [i)] \textbf{There exists exactly one directed path between $ u $ and $ v $, for all $ u,v \in V(C) $} (see Figure~\ref{fig:Minor_general_edge}$ (a) $).
		\begin{itemize}
			\item Let $ \ell $ and $ r $ be the left successor and the right successor of $ e $, respectively. Suppose $ \ell \in E(C) $. Given that there is exactly one directed path between every pair of vertices in $ V(C) $, if $ \ell \in E(C) $, the edge $ \ell $ must be the next edge after $ e $ in $ C $. This implies that $ e $ is a proper $ \omega^{2} $-semiloop instead of a proper edge. Hence, $ \ell \notin E(C) $. Similar arguments show that $ r \notin E(C) $. The fact that $ \ell,r \notin E(C) $ implies that $ \hbox{deg}(u) \ge 6 $. By contracting every edge in $ E(C)\setminus e $ in $ D $, the edge $ e $ becomes a proper $ 1 $-semiloop. By Lemma~\ref{lem:minors_G_13}, we have $ G_{1,3} $ as a minor of $ D $.
			
			\item Let $ g \in E(C) $ be the edge directed out from $ u $. Suppose $ e $ and $ g $ partition $ T=I(u)\setminus e $ in $ D $ into two sets (based on the cyclic order of $ T $), $ (i) $ $ S_{c} $ that contains every edge directed into $ u $ that lies betwen $ g $ and $ e $ as we go from $ g $ to $ e $ in clockwise order around $ u $, and $ (ii) $ $ S_{a}=T\setminus S_{c} $. If we $ \omega $-reduce (respectively, $ \omega^{2} $-reduce) every edge in $ S_{c} $ (respectively, $ S_{a} $), the edge $ e $ is now a proper $ \omega $-semiloop (respectively, proper $ \omega^2 $-semiloop). By Lemma~\ref{lem:minors_G_23a} (respectively, Lemma~\ref{lem:minors_G_23c}), we have $ G^{a}_{2,3} $ (respectively, $ G^{c}_{2,3} $) as a minor of $ D $.
		\end{itemize}
		
		\item[ii)] \textbf{There exists more than one directed path between $ u $ and $ v $, for some $ u,v \in V(C) $} (see Figure~\ref{fig:Minor_general_edge}$ (b) $). Let $ p_{1} $ and $ p_{2} $ be two of the paths that are directed from $ u $ to $ v $, respectively.
		\begin{itemize}
			\item Contract every edge in $ p_{1} $, and all but one edge in $ p_{2} $; the remaining edge in $ p_{2} $ is a proper $ 1 $-semiloop. By Lemma~\ref{lem:minors_G_13}, we have $ G_{1,3} $ as a minor of $ D $.
			
			\item Recall that $ e \in E(\partial f) $. Suppose $ v \in V(\partial f) $ and $ E(p_{1}) \subset E(\partial f) $. Let $ h \in E(p_{1}) $ and $ h $ has $ v $ as its head. By Lemma~\ref{lem:bigger_c-face_to_alt_c}, a face $ f' $ of size exactly two can be obtained from $ f $, by a sequence of contraction operations. So, let $ V(\partial f')=\{u,v\} $ and $ E(\partial f')=\{e,h\} $. By $ \omega^{2} $-reducing every edge in $ I(u)\setminus e $, we have $ \hbox{deg}(u)=2 $, and $ h $ is now a proper $ \omega $-semiloop. By Lemma~\ref{lem:minors_G_23a}, we have $ G^{a}_{2,3} $ as a minor of $ D $.
			
			\item Let $ g $ be an anticlockwise face in $ D $, the proper edge $ e \in E(\partial g) $ and $ E(p_{2}) \subset E(\partial g) $. Suppose $ h \in E(p_{2}) $ and $ h $ has $ v $ as its head. By Lemma~\ref{lem:bigger_a-face_to_alt_a}, an anticlockwise face $ g' $ of size exactly two can be obtained from $ g $. So, let $ V(\partial g')=\{u,v\} $ and $ E(\partial g')=\{e,h\} $. By $ \omega $-reducing every edge in $ I(u)\setminus e $, we have $ \hbox{deg}(u)=2 $, and $ e $ is now a proper $ \omega^{2} $-semiloop. By Lemma~\ref{lem:minors_G_23c}, we have $ G^{c}_{2,3} $ as a minor of $ D $.
		\end{itemize}
	\end{enumerate}
\end{proof}

\begin{corollary}\label{cor:proper_edge}
	If $ x=0 $, $ y=0 $ or $ z=0 $, and there exists a proper edge in an alternating dimap $ D $ of genus zero, then $ P(D)=0 $.
\end{corollary}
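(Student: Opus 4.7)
The plan is to reduce the three cases $x=0$, $y=0$, $z=0$ to a single case via triality, and then proceed by strong induction on $|E(D)|$, leveraging Lemma~\ref{lem:minors} together with the semiloop corollaries (\ref{cor:1-semiloop}, \ref{cor:omega-semiloop}, \ref{cor:omega2-semiloop}). By Theorem~\ref{thm:trial_ETI}, the triality substitutions on $(x,y,z)$ (and the other parameters) permute the three cases, so it suffices to establish the case $x=0$; the others then follow by applying Theorem~\ref{thm:trial_ETI} and relabeling.

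For $x=0$, I would actually prove the slightly stronger induction hypothesis: $F(D)=0$ whenever $D$ contains a proper edge, a proper $\omega$-semiloop, or a proper $\omega^{2}$-semiloop. The two semiloop cases are already delivered by Corollaries~\ref{cor:omega-semiloop} and~\ref{cor:omega2-semiloop}, so absorbing them into the inductive hypothesis is free and indispensable: the three-term reduction of a proper edge can turn it into a proper semiloop, and we need the hypothesis to apply to each resulting dimap. For the base case, $|E(D)| \le 3$ and the only connected alternating dimaps containing a proper edge are $G^{a}_{2,3}$ and $G^{c}_{2,3}$; Corollary~\ref{cor:derived_polynomials_G_{1,3}_G^{a}_{2,3}_G^{c}_{2,3}} gives $F(G^{a}_{2,3}) = wxz = 0$ and $F(G^{c}_{2,3}) = wxy = 0$, and multiplicativity of $F$ handles any additional components of $D$.

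For the inductive step with $|E(D)|>3$, pick a proper edge $r$ in $D$ (which exists by hypothesis) and apply recurrence (ETI8):
\[
F(D) = j\, F(D[1]r) + k\, F(D[\omega]r) + l\, F(D[\omega^{2}]r).
\]
The task reduces to showing each $F(D[\mu]r)=0$ by the inductive hypothesis. This is the main obstacle: one must verify that each of $D[1]r$, $D[\omega]r$, $D[\omega^{2}]r$ still carries one of the three bad structures (a proper edge, a proper $\omega$-semiloop, or a proper $\omega^{2}$-semiloop). I would handle this via a local analysis around the head vertex of $r$ parallel to the case split used in the proof of Lemma~\ref{lem:minors}: case (i) where there is exactly one directed path between the endpoints of $r$ along the outer cycle, and case (ii) where there are more than one, each further split by the reduction type $\mu$. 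In any residual sub-case where every bad substructure appears to disappear after the reduction, the resulting dimap is small enough that its $F$-value can be read off directly from Lemma~\ref{lem:derived_polynomials_G_{2,4}} or Corollary~\ref{cor:derived_polynomials_G_{1,3}_G^{a}_{2,3}_G^{c}_{2,3}} and verified to vanish.
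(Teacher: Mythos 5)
There is a genuine gap in your inductive step, and it is not a repairable detail: the mechanism you propose for propagating the zero cannot work. You reduce a proper edge $r$ via \hyperref[def:Extended_Tutte_Invariant]{(ETI8)} and aim to show each of $F(D[1]r)$, $F(D[\omega]r)$, $F(D[\omega^{2}]r)$ vanishes. But take $D \cong G_{2,4}$ (Figure~\ref{fig:G_{2,4}}) with $x=0$ and $w,y,z$ generic, and let $r=n$ be its proper edge. Then $D[1]n \cong G_{1,3}$, which contains only triloops and a proper $1$-semiloop --- none of your ``bad'' structures --- and $F(G_{1,3})=wyz \neq 0$. So the term $j\cdot F(D[1]n)=jwyz$ does not vanish, and your fallback (``read off the small case and verify it vanishes'') is false here. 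The corollary is nevertheless true for $G_{2,4}$: the edge-ordering that reduces the proper $1$-loop $o$ first gives the derived polynomial $wxyz=0$ (entry E2 of Lemma~\ref{lem:derived_polynomials_G_{2,4}}), and since $F$ is an extended Tutte invariant all derived polynomials agree, which forces $jwyz=0$, i.e.\ $j=0$ (compare the condition $j=0$ in Theorem~\ref{thm:ETI_x=0}). The essential point your proof misses is that the vanishing of $F(D)$ comes from \emph{switching edge-orderings} and invoking well-definedness (equivalently, from the parameter constraints that well-definedness imposes), not from pushing a zero through a single application of the three-term recurrence. An induction locked to one first edge cannot see this, and the obstruction occurs for arbitrarily large $D$ (e.g.\ a digon with many nested loops at one endvertex), so it is not confined to small residual cases.

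This is also where your route diverges from the paper's. The paper's proof is not an induction at all: it applies Lemma~\ref{lem:minors} to get all three of $G_{1,3}$, $G^{a}_{2,3}$, $G^{c}_{2,3}$ as minors of $D$, notes that whichever of $x,y,z$ is zero, one of these minors contains a proper triloop of the matching type, and concludes that reducing towards that triloop injects a factor of zero into the (unique) value of $F(D)$. Two smaller points: your triality reduction to the single case $x=0$ via Theorem~\ref{thm:trial_ETI} is legitimate but unnecessary under the paper's argument, since Lemma~\ref{lem:minors} delivers all three minors simultaneously; and your base case misidentifies $G^{a}_{2,3}$ and $G^{c}_{2,3}$ as containing proper edges --- their non-triloop edges are proper $\omega$- and $\omega^{2}$-semiloops respectively (their derived polynomials use the coefficients $d,e,f$ and $g,h,i$), so the proper-edge part of your base case is vacuous.
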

 
\begin{proof}
	By Lemma~\ref{lem:minors}, if there exists a proper edge in an alternating dimap $ D $ of genus zero, then $ D $ contains $ G_{1,3} $, $ G^{a}_{2,3} $ and $ G^{c}_{2,3} $ as minors. Since a proper $ 1 $-loop, a proper $ \omega $-loop and a proper $ \omega^{2} $-loop may each be found in at least one of these minors, by Definition~\ref{def:Extended_Tutte_Invariant}, we have $ P(D)=0 $ when at least one of $ x,y,z $ is zero.
\end{proof}

By Lemmas~\ref{lem:minors_G_13}, \ref{lem:minors_G_23a}, \ref{lem:minors_G_23c} and \ref{lem:minors}, we obtain the following corollary.
\begin{corollary}\label{cor:minors}
	Every alternating dimap of genus zero that contains a non-triloop edge has $ G_{1,3} $ or $ G^{a}_{2,3} $ or $ G^{c}_{2,3} $ as a minor. \qed
\end{corollary}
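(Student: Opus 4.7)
The plan is to do a case analysis on the type of the non-triloop edge $e$ in $D$, since all the work has already been done in Lemmas~\ref{lem:minors_G_13}, \ref{lem:minors_G_23a}, \ref{lem:minors_G_23c} and \ref{lem:minors}. The first step is to verify that the classification of non-triloop edges is exhaustive: by the definitions in \cref{sec:type_of_loops}, every edge is either proper (i.e., not a semiloop) or a semiloop, and every semiloop is a $\mu$-semiloop for some $\mu \in \{1,\omega,\omega^{2}\}$. A $\mu$-semiloop that is not a triloop is, by definition, a proper $\mu$-semiloop. Hence every non-triloop edge falls into exactly one of the following four types: proper 1-semiloop, proper $\omega$-semiloop, proper $\omega^{2}$-semiloop, or proper edge.

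The second step is to dispatch each case to the appropriate lemma. If $e$ is a proper 1-semiloop, Lemma~\ref{lem:minors_G_13} supplies $G_{1,3}$ as a minor. If $e$ is a proper $\omega$-semiloop, Lemma~\ref{lem:minors_G_23a} supplies $G^{a}_{2,3}$ as a minor. If $e$ is a proper $\omega^{2}$-semiloop, Lemma~\ref{lem:minors_G_23c} supplies $G^{c}_{2,3}$ as a minor. Finally, if $e$ is a proper edge, then Lemma~\ref{lem:minors} supplies all three of $G_{1,3}$, $G^{a}_{2,3}$ and $G^{c}_{2,3}$ as minors, so in particular at least one of $G_{1,3}$ or $G_{2,3}$ (where $G_{2,3}$ ranges over $\{G^{a}_{2,3}, G^{c}_{2,3}\}$) is a minor. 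In every case we obtain $G_{1,3}$ or $G_{2,3}$ as a minor of $D$, which is the desired conclusion.

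There is no real obstacle here; the statement is essentially a bookkeeping corollary that consolidates the four preceding minor-extraction lemmas. The only mild care required is in the first step, namely checking that the four edge types listed above do partition the non-triloops — this is immediate from the definitions but worth recording explicitly so that the case analysis is visibly complete.
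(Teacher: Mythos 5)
Your proposal is correct and matches the paper's treatment: the paper derives this corollary directly from Lemmas~\ref{lem:minors_G_13}, \ref{lem:minors_G_23a}, \ref{lem:minors_G_23c} and \ref{lem:minors} with no further argument, exactly the four-way case split you describe. Your explicit check that proper $1$-, $\omega$-, $\omega^{2}$-semiloops and proper edges exhaust the non-triloop edges is a reasonable (and correct) elaboration of what the paper leaves implicit.
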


Further from Theorem~\ref{thm:Extended_Tutte_Invariant}, we now discuss the the properties of alternating dimaps of genus zero that are required, in order to obtain a non-trivial invariant, when at least one of $ x,y,z $ is zero.
\begin{theorem}\label{thm:x=y=z=0}
	Let $ S=(w,x,y,z,a,b,c,d,e,f,g,h,i,j,k,l) $ be a parameter sequence such that $ w \ne 0 $ and $ x=y=z=0 $. A function $ P $ is an extended Tutte invariant with respect to $ S $ for every alternating dimap $ D $ of genus zero if and only if
	\begin{equation*}
		P(D) = \left\{ 
		\begin{array}{ll}
			w^{k(D)},	& \text{if \instar(D)=\aface(D)=\cface(D)=k(D),}\\
			0,			& \text{otherwise,}
		\end{array} \right.
	\end{equation*}
	with $ a=f=h=0 $.
\end{theorem}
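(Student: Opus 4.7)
The plan is to address both directions by leveraging the earlier excluded-minor machinery together with small-example computations.

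\textbf{Forward direction.} Assume $F$ is an extended Tutte invariant with $w\ne 0$ and $x=y=z=0$. First I argue that $F(D)=0$ whenever $D$ contains any non-ultraloop edge: for a proper $1$-, $\omega$- or $\omega^{2}$-loop this is immediate from (ETI2)--(ETI4) together with $x=y=z=0$; for a proper $1$-, $\omega$- or $\omega^{2}$-semiloop it follows from Corollaries~\ref{cor:1-semiloop}, \ref{cor:omega-semiloop} and \ref{cor:omega2-semiloop}; and for a proper edge it follows from Corollary~\ref{cor:proper_edge}. Hence $F(D)\ne 0$ forces every edge of $D$ to be an ultraloop, so $D$ is a disjoint union of $k(D)$ ultraloops, and iterating (ETI1) yields $F(D)=w^{k(D)}$. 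To force $a=0$, I apply (ETI5) with $D=G_{1,3}$ and $\rho$ the proper $1$-semiloop: by Corollary~\ref{cor:derived_polynomials_G_{1,3}_G^{a}_{2,3}_G^{c}_{2,3}}(a), $F(G_{1,3}[1]\rho)=w^{2}$, while $F(G_{1,3}[\omega]\rho)=F(G_{1,3}[\omega^{2}]\rho)=0$ because the reduced dimaps still contain proper triloops, and $F(G_{1,3})=0$ for the same reason; thus $aw^{2}=0$, forcing $a=0$. The analogous arguments applied to $G^{a}_{2,3}$ with (ETI6) and part (b) of that corollary give $f=0$, and to $G^{c}_{2,3}$ with (ETI7) and part (c) give $h=0$.

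\textbf{Converse direction.} I now verify (ETI1)--(ETI8) for the stated $F$. Cases (ETI1)--(ETI4) are immediate: (ETI1) splits according to whether $D$ is or is not a union of ultraloops, and (ETI2)--(ETI4) have both sides equal to zero since $x=y=z=0$ and $D$ contains the given proper triloop. For (ETI5)--(ETI8), the left-hand side is zero because the reduced edge is non-ultraloop, so I must show the right-hand side is too. The constraints $a=f=h=0$ kill precisely the summands $aF(D[1]r)$, $fF(D[\omega^{2}]r)$ and $hF(D[\omega]r)$, which are the ones whose reduced dimap may itself be a union of ultraloops. For every remaining summand, a structural degree argument shows the reduced dimap still carries a vertex of degree at least $4$, and hence a non-ultraloop edge. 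For a proper $1$-semiloop at $v$, one notes $\deg_{D}(v)\ge 6$ (at a degree-$4$ vertex every loop forms a single-edge face and is therefore a triloop), so the $\omega$- and $\omega^{2}$-reductions leave $v$ of degree $\ge 4$; the proper $\omega$- and $\omega^{2}$-semiloop cases follow by triality via Theorem~\ref{thm:trial_ETI}; and for a proper edge $r=uv$, the bound $\deg_{D}(v)\ge 4$ (else $r$ is a $1$-loop) handles $D[1]r$ through the merged vertex, while a cutset-based analysis handles $D[\omega]r$ and $D[\omega^{2}]r$.

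The main obstacle is the final structural step in the converse for a proper edge $r=uv$: certifying that $D[\omega]r$ and $D[\omega^{2}]r$ never collapse into unions of ultraloops, even when the endpoint degrees are as small as possible (with $\deg_{D}(u)=2$ allowed). The key point is that any hypothetical collapse would impose an extremely restricted cyclic structure on $D$ around $v$, in which at least one of the successors of $r$ must lie in the same cycle as $r$, triggering the cutset clause of the $\omega$- or $\omega^{2}$-semiloop definition and thereby forcing $r$ to be a semiloop, contradicting properness.
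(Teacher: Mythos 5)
Your proposal is correct and takes essentially the same route as the paper: the forward direction extracts $a=f=h=0$ from the derived polynomials of $G_{1,3}$, $G^{a}_{2,3}$ and $G^{c}_{2,3}$ and uses Corollaries~\ref{cor:1-semiloop}--\ref{cor:proper_edge} to force $F(D)=0$ off the ultraloop-union case, and the converse is a case-check of (ETI1)--(ETI8). If anything you are more careful than the paper on one point: the paper silently asserts that the surviving summands in (ETI5)--(ETI8) vanish, whereas you correctly identify that this requires showing the corresponding reduced dimaps are never disjoint unions of ultraloops, and your sketched justifications (the degree bound $\deg_{D}(v)\ge 6$ for a proper $1$-semiloop, and the observation that a collapse of $D[\omega]r$ or $D[\omega^{2}]r$ would force the relevant successor of $r$ to pair with $r$ as a cutset, making $r$ a semiloop) are sound.
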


\begin{proof}
	Let $ S $ be as stated and $ D $ be an alternating dimap of genus zero. We first prove the forward implication. Note that all the derived polynomials must be equal for $ P(D) $ to be an extended Tutte invariant. By using Lemma~\ref{lem:derived_polynomials_G_{1,3}_G^{a}_{2,3}_G^{c}_{2,3}} and the fact that $ w \ne 0 $ and $ x=y=z=0 $, we obtain $ a=f=h=0 $ as desired.
	
	If $ \instar(D)=\aface(D)=\cface(D)=k(D) $, then $ D $ is a disjoint union of ultraloops. By Definition~\ref{def:Extended_Tutte_Invariant}, we have $ P(D)=w^{k(D)} $.
	
	We next show $ P(D)=0 $ in the following cases.
	\begin{enumerate}
		\item[i)] $ \instar(D) \ne k(D) = \aface(D) = \cface(D) $. There is a proper $ 1 $-loop in $ D $. By (\hyperref[def:Extended_Tutte_Invariant]{ETI2}) and using $ x=0 $, we have $ P(D)=0 $.
		
		\item[ii)] $ \aface(D) \ne k(D) = \instar(D) = \cface(D) $. A similar approach to (i) gives $ P(D)=0 $.
		
		\item[iii)] $ \cface(D) \ne k(D) = \instar(D) = \aface(D) $. A similar approach to (i) gives $ P(D)=0 $.
		
		\item[iv)] $ \aface(D) \ne k(D) $, $ \cface(D) \ne k(D) = \instar(D) $. If every edge in $ D $ is a triloop, the result is trivial. Otherwise, there exists a proper $ 1 $-semiloop in $ D $. By Corollary~\ref{cor:1-semiloop}, we have $ P(D)=0 $.
		
		\item[v)] $ \instar(D) \ne k(D) $, $ \cface(D) \ne k(D) = \aface(D) $. A similar approach to (iv) gives $ P(D)=0 $.
		
		\item[vi)] $ \instar(D) \ne k(D) $, $ \aface(D) \ne k(D) = \cface(D) $. A similar approach to (iv) gives $ P(D)=0 $.
				
		\item[vii)] $ \instar(D) \ne k(D) $, $ \aface(D) \ne k(D) $ and $ \cface(D) \ne k(D) $. If every edge in $ D $ is a triloop, the result is trivial. Otherwise, Corollaries~\ref{cor:1-semiloop}, \ref{cor:omega-semiloop}, \ref{cor:omega2-semiloop} or \ref{cor:proper_edge} gives $ P(D)=0 $.
	\end{enumerate}

	Conversely, we show that
	\begin{equation*}
		P(D) = \left\{ 
		\begin{array}{ll}
			w^{k(D)},	& \text{if \instar(D)=\aface(D)=\cface(D)=k(D),}\\
			0,			& \text{otherwise,}
		\end{array} \right.
	\end{equation*}
	with $ a=f=h=0 $ is an extended Tutte invariant with respect to $ S $ for every alternating dimap $ D $, as in Definition~\ref{def:Extended_Tutte_Invariant}. Based on the proof in Theorem~\ref{thm:Extended_Tutte_Invariant}, it is clear that $ P(D) $ is a multiplicative invariant.
		
	We now show that $ P(D) $ satisfies (\hyperref[def:Extended_Tutte_Invariant]{ETI1}) to (\hyperref[def:Extended_Tutte_Invariant]{ETI8}) by using induction on $ \abs{E(D)}=m $. When $ m=0 $, we have $ P(D)=1 $ and the result for $ m=0 $ follows. So, suppose $ m>0 $ and the result holds for every alternating dimap of genus zero that has size less than $ m $.
	
	Suppose $ \instar(D)=\aface(D)=\cface(D)=k(D) $. We have $ D $ as a disjoint union of ultraloops. Deletion of an ultraloop $ r $ from $ D $ reduces the number of components of $ D $ by 1. Thus,
	\begin{align*}
		P(D) & = w^{k(D)} = w \cdot w^{k(D)-1} = w \cdot w^{k(D\setminus r)} = w \cdot P(D\setminus r),
	\end{align*}
	where the last equality uses the inductive hypothesis.
	
	Note that for $ \mu \in \{1,\omega,\omega^{2}\} $, it is possible to have $ \instar(D[\mu]r)=\aface(D[\mu]r)=\cface(D[\mu]r)=k(D[\mu]r) $ by $ \mu $-reducing $ r $. Hence, we may have to consider more than one scenario for the remaining cases. Let $ r \in E(D) $.
	\begin{enumerate}
		\item[i)] $ r $ is a proper $ 1 $-loop.
		\begin{enumerate}
			\item $ \instar(D[1]r)=\aface(D[1]r)=\cface(D[1]r)=k(D[1]r) $.		
			\begin{align*}
				P(D) & = 0 = x \cdot w^{k(D[1]r)} = x \cdot P(D[1]r),
			\end{align*}
			where the penultimate equality uses $ x=0 $ and the last equality uses the inductive hypothesis.
			
			\item Otherwise,
			\begin{align*}
				P(D) & = 0 = x \cdot 0 = x \cdot P(D[1]r),
			\end{align*}
			where the last equality uses the inductive hypothesis.
		\end{enumerate}
	
		\item[ii)] $ r $ is a proper $ 1 $-semiloop. In $ D[1]r $, the number of components and in-stars are both increased by $ 1 $. In $ D[\omega]r $ and $ D[\omega^{2}]r $, the number of c-faces and a-faces are reduced by $ 1 $, respectively.
		\begin{enumerate}
			\item $ \instar(D[1]r)=\aface(D[1]r)=\cface(D[1]r)=k(D[1]r) $.		
			\begin{align*}
				P(D) & = 0 = a \cdot w^{k(D[1]r)} + b \cdot 0 + c \cdot 0 \tag{since $ a=0 $}\\
					& = a \cdot P(D[1]r) + b \cdot P(D[\omega]r) + c \cdot P(D[\omega^{2}]r) \tag[.]{by the inductive hypothesis}
			\end{align*}
			
			\item Otherwise,
			\begin{align*}
				P(D) & = 0 = a \cdot 0 + b \cdot 0 + c \cdot 0\\
					& = a \cdot P(D[1]r) + b \cdot P(D[\omega]r) + c \cdot P(D[\omega^{2}]r) \tag[.]{by the inductive hypothesis}
			\end{align*}
		\end{enumerate}

		\item[iii)] $ r $ is a proper edge.
		\begin{align*}
			P(D) & = 0 = j \cdot 0 + k \cdot 0 + l \cdot 0\\
				& = j \cdot P(D[1]r) + k \cdot P(D[\omega]r) + l \cdot P(D[\omega^{2}]r) \tag[.]{by the inductive hypothesis}
		\end{align*}
	\end{enumerate}

	The arguments for the other four cases are similar. See~\cite{YowPHD2019} for details.
\end{proof}

Next, we have the following three results in which two of $ x,y,z $ are zero.
\begin{theorem}\label{thm:ETI_x=y=0}
	Let $ S=(w,x,y,z,a,b,c,d,e,f,g,h,i,j,k,l) $ be a parameter sequence such that $ w,z \ne 0 $ and $ x=y=0 $. A function $ P $ is an extended Tutte invariant with respect to $ S $ for every alternating dimap $ D $ of genus zero if and only if
	\begin{equation*}
		P(D) = \left\{ 
		\begin{array}{ll}
			w^{k(D)} \cdot z^{\cface(D)-k(D)},	& \text{if \instar(D)=\aface(D)=k(D),}\\
			0,									& \text{otherwise,}
		\end{array} \right.
	\end{equation*}
	with $ a=c=d=f=h=0 $.
\end{theorem}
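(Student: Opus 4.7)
The plan is to follow the two-step template of Theorem~\ref{thm:x=y=z=0}: the parameter constraints and closed form for $F$ in the ``non-zero'' regime are first derived from small dimaps, and the proposed formula is then verified to satisfy all the extended Tutte invariant axioms by induction on $\abs{E(D)}$.

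For the forward direction, substituting $x=y=0$ and $w,z\ne 0$ into Corollary~\ref{cor:derived_polynomials_G_{1,3}_G^{a}_{2,3}_G^{c}_{2,3}} and equating the two derived polynomials for each of $G_{1,3}$, $G^{a}_{2,3}$, and $G^{c}_{2,3}$ yields $h=0$ from $G^{c}_{2,3}$ together with the relations $aw+cz=0$ from $G_{1,3}$ and $dz+fw=0$ from $G^{a}_{2,3}$; the individual constraints $a=c=0$ and $d=f=0$ are then forced by applying the relevant semiloop recurrence in a slightly larger dimap whose three $\mu$-reductions produce $F$-values with an incompatible zero pattern, separating the two coefficients. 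Next, one observes that $\instar(D)=\aface(D)=k(D)$ holds precisely when each connected component of $D$ is a single-vertex ``c-multiloop'' carrying only ultraloops or proper $\omega^{2}$-loops, since any non-loop edge would push $\instar$ above $k$ while any $\omega$-loop or $\omega$-type semiloop would push $\aface$ above $k$. On such $D$, the formula $F(D)=w^{k(D)}\,z^{\cface(D)-k(D)}$ follows by induction on $\abs{E(D)}$ using only (ETI1) and (ETI4). When $D$ violates the constraint, a case split on the edge type causing the imbalance---applying (ETI2) and (ETI3) with $x=y=0$ for proper $1$-loops and $\omega$-loops, and Corollaries~\ref{cor:1-semiloop}, \ref{cor:omega-semiloop}, \ref{cor:omega2-semiloop}, and~\ref{cor:proper_edge} for proper semiloops and proper edges---gives $F(D)=0$.

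For the reverse direction, multiplicativity is immediate from the exponent form, so it remains to verify (ETI1)--(ETI8) by induction on $\abs{E(D)}$, in the same style as the corresponding verification in the proof of Theorem~\ref{thm:x=y=z=0}. The ultraloop and proper $\omega^{2}$-loop cases reduce to exponent bookkeeping in $w$ and $z$, while the proper $1$-loop and proper $\omega$-loop cases are trivial because $x=y=0$ annihilates both sides. The main obstacle is the semiloop cases (ETI5)--(ETI7) and the proper-edge case (ETI8), whose right-hand sides are three-term linear combinations: the vanishing of $a, c, d, f, h$ eliminates the problematic coefficients, and the surviving terms match because their $F$-values are either each zero (when the reduced dimap fails the constraint $\instar=\aface=k$, by the inductive hypothesis) or carry a vanishing $x$- or $y$-factor, consistent with $F(D)=0$---which itself follows automatically since no dimap hosting a proper semiloop or proper edge can satisfy $\instar=\aface=k$.
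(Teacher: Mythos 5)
Your overall architecture is the right one and matches the template the paper itself defers to (its proof of this theorem is only the remark that it is ``similar to the proof in Theorem~\ref{thm:x=y=z=0}''): extract the parameter constraints from the small dimaps of Corollary~\ref{cor:derived_polynomials_G_{1,3}_G^{a}_{2,3}_G^{c}_{2,3}}, identify the support $\instar(D)=\aface(D)=k(D)$ as disjoint unions of ultraloops and single-vertex bouquets of proper $\omega^{2}$-loops, kill everything off the support with Corollaries~\ref{cor:1-semiloop}--\ref{cor:proper_edge}, and verify (ETI1)--(ETI8) by induction for the converse. The converse half of your argument is sound: with $a=c=d=f=h=0$ the three-term recurrences collapse correctly, because no dimap in the support contains a proper semiloop or a proper edge.

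The gap is in the forward direction, at the step where you pass from the relations $aw+cz=0$ and $dz+fw=0$ (which is all that $G_{1,3}$ and $G^{a}_{2,3}$ give you) to the individual equalities $a=c=0$ and $d=f=0$. You assert that a ``slightly larger dimap whose three $\mu$-reductions produce $F$-values with an incompatible zero pattern'' separates the coefficients, but you exhibit no such dimap, and the mechanism cannot work as described. For a proper $1$-semiloop $r$, the $1$-reduction increases $\instar$ and $k$ by one each while fixing $\aface$, and the $\omega^{2}$-reduction decreases $\aface$ by one while fixing $\instar$ and $k$; hence $D[1]r$ and $D[\omega^{2}]r$ lie in the support $\instar=\aface=k$ under \emph{exactly} the same hypothesis on $D$, namely $\instar(D)=k(D)$ and $\aface(D)=k(D)+1$, and $D[\omega]r$ is never in the support (a dimap in the support has only $\omega^{2}$-loops, hence no proper $1$-semiloop). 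Every instance of (ETI5) therefore reduces to $w^{k(D)}z^{\cface(D)-k(D)-1}\bigl(aw+cz\bigr)=0$ and can never distinguish $a$ from $c$; the analogous computation for (ETI6) shows that the proper $\omega$-semiloop recurrence only ever yields $dz+fw=0$. So the forward implication as written establishes the two linear relations but not the vanishing of the individual coefficients claimed in the statement; this is the one non-routine point of the whole proof, and it is precisely where your proposal hand-waves. Closing it requires a genuinely different source of constraints --- for instance alternating dimaps of positive genus, where a $1$-reduction of a $1$-semiloop can lower the genus instead of raising the component count, so that the bookkeeping above changes and the two reductions can land on opposite sides of the support --- and you would need to exhibit such a dimap explicitly and redo the face/component counts for it.
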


\begin{proof}
	The proof is similar to the proof in Theorem~\ref{thm:x=y=z=0}, with some extra routine details.
\end{proof}

Triality leads to the following corollaries, for $ x=z=0 $ and $ y=z=0 $, respectively.
\begin{corollary}
	Let $ S=(w,x,y,z,a,b,c,d,e,f,g,h,i,j,k,l) $ be a parameter sequence.
	\begin{enumerate}
		\item[a)] For $ w,y \ne 0 $ and $ x=z=0 $, a function $ P $ is an extended Tutte invariant with respect to $ S $ for every alternating dimap $ D $ of genus zero if and only if
		\begin{equation*}
		P(D) = \left\{ 
			\begin{array}{ll}
				w^{k(D)} \cdot y^{\aface(D)-k(D)},	& \text{if \instar(D)=\cface(D)=k(D),}\\
				0,									& \text{otherwise,}
			\end{array} \right.
		\end{equation*}
		with $ a=b=f=g=h=0 $.
	
	\item[b)] For $ w,x \ne 0 $ and $ y=z=0 $, a function $ P $ is an extended Tutte invariant with respect to $ S $ for every alternating dimap $ D $ of genus zero if and only if
		\begin{equation*}
		P(D) = \left\{ 
			\begin{array}{ll}
				w^{k(D)} \cdot x^{\instar(D)-k(D)},	& \text{if \aface(D)=\cface(D)=k(D),}\\
				0,									& \text{otherwise,}
			\end{array} \right.
		\end{equation*}
		with $ a=e=f=h=i=0 $.\qed
	\end{enumerate}
\end{corollary}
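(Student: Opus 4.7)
The plan is to derive both parts directly from Theorem~\ref{thm:trial_ETI} combined with Theorem~\ref{thm:ETI_x=y=0}. The key observation is that $\omega$- and $\omega^{2}$-triality cyclically permute the three triloop variables $x, y, z$ and, in parallel, cyclically permute the three counts $\instar(D), \aface(D), \cface(D)$ while fixing both $w$ and $k(D)$. Hence each of the hypotheses $x = z = 0$ and $y = z = 0$ can be transported back to the already-settled case $x = y = 0$ of Theorem~\ref{thm:ETI_x=y=0} by passing to $D^{\omega}$ or $D^{\omega^{2}}$, and then the conclusions can simply be read back off the permuted parameter sequences supplied by Theorem~\ref{thm:trial_ETI}.

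For part (a), apply the identity $F(D;P) = F(D^{\omega}; P^{\omega})$ with $P^{\omega} = (w,z,x,y,h,i,g,b,c,a,e,f,d,k,l,j)$. The hypothesis $x = z = 0$, $y \ne 0$ makes the ``$x$'', ``$y$'' and ``$z$'' entries of $P^{\omega}$ equal to $0, 0$ and $y$, respectively, so Theorem~\ref{thm:ETI_x=y=0} applies to $F(D^{\omega}; P^{\omega})$. It yields $F(D^{\omega}) = w^{k(D^{\omega})} \cdot y^{\cface(D^{\omega}) - k(D^{\omega})}$ when $\instar(D^{\omega}) = \aface(D^{\omega}) = k(D^{\omega})$ (and zero otherwise), together with five vanishings on the ``$a,c,d,f,h$'' slots of $P^{\omega}$ that, once the positions are matched up, translate to $a = b = f = g = h = 0$ on $P$. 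Using that $\omega$-triality preserves $k$ and cyclically shifts $\instar, \aface, \cface$ (see~\cite{Farr2018}) then rewrites the condition and formula in terms of $D$ to give exactly the statement of (a). Part (b) is the same recipe with $\omega^{2}$ in place of $\omega$: using $P^{\omega^{2}} = (w,y,z,x,f,d,e,i,g,h,c,a,b,l,j,k)$, the hypothesis $y = z = 0$, $x \ne 0$ again reduces to the setting of Theorem~\ref{thm:ETI_x=y=0}, and the corresponding five vanishings this time read as $a = e = f = h = i = 0$, as required.

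The only delicate point is bookkeeping: one must carefully locate each parameter inside $P^{\omega}$ and $P^{\omega^{2}}$, and record the precise cyclic permutation of $\instar, \aface, \cface$ under the two triality operations so that the support condition and the exponent in the closed form come out correctly. Both checks are routine from the definitions, so no substantive obstacle is expected beyond this index-matching.
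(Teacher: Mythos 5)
Your proposal is correct and is exactly the route the paper takes: the paper derives this corollary from Theorem~\ref{thm:ETI_x=y=0} via triality (Theorem~\ref{thm:trial_ETI}), with the same permuted parameter sequences and the same cyclic shift of $\instar$, $\aface$, $\cface$. Your index-matching checks out in both parts (the vanishing slots do land on $a=b=f=g=h=0$ and $a=e=f=h=i=0$ respectively), so this is just a fuller write-up of the argument the paper leaves implicit.
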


Lastly, we investigate cases where exactly one of the three variables is zero.
\begin{theorem}\label{thm:ETI_x=0}
	Let $ S=(w,x,y,z,a,b,c,d,e,f,g,h,i,j,k,l) $ be a parameter sequence such that $ w,y,z \ne 0 $ and $ x=0 $. A function $ P $ is an extended Tutte invariant with respect to $ S $ for every alternating dimap $ D $ of genus zero if and only if
	\begin{equation*}
		P(D) = \left\{ 
		\begin{array}{ll}
			w^{k(D)} \cdot y^{\aface(D)-k(D)} \cdot z^{\cface(D)-k(D)},	& \text{if \instar(D)=k(D),}\\
			0,															& \text{otherwise,}
		\end{array} \right.		
	\end{equation*}
	with $ d=f=g=h=j=0 $ and $ yz=aw+by+cz $.
\end{theorem}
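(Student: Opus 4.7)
The plan is to follow the strategy of the proofs of Theorems~\ref{thm:x=y=z=0} and~\ref{thm:ETI_x=y=0}, adapting it to the present case where only $x = 0$ while $y, z \ne 0$ preserve more structure.

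For the forward direction, I would first extract the algebraic constraints. Equating the two derived polynomials of $G_{1,3}$ from Corollary~\ref{cor:derived_polynomials_G_{1,3}_G^{a}_{2,3}_G^{c}_{2,3}}(a) gives $wyz = aw^{2} + bwy + cwz$; dividing by $w$ yields $yz = aw + by + cz$. Setting $x = 0$ in (ETI2) forces $F(D) = 0$ whenever $D$ contains a proper $1$-loop, and Corollaries~\ref{cor:omega-semiloop},~\ref{cor:omega2-semiloop}, and~\ref{cor:proper_edge} extend this to alternating dimaps containing a proper $\omega$-semiloop, a proper $\omega^{2}$-semiloop, or a proper edge. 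Consequently, the only dimaps on which $F$ can be nonzero are those whose edges are entirely ultraloops, proper $\omega$-loops, proper $\omega^{2}$-loops, or proper $1$-semiloops---that is, the dimaps with $\instar(D) = k(D)$. For such $D$, I would verify $F(D) = w^{k(D)} y^{\aface(D) - k(D)} z^{\cface(D) - k(D)}$ by induction on $\abs{E(D)}$, using only the applicable recurrences (ETI1), (ETI3), (ETI4), and (ETI5); the first three cases reproduce the corresponding calculations in Theorem~\ref{thm:Extended_Tutte_Invariant}, while (ETI5) uses the relation $yz = aw + by + cz$ to combine the three summands, exactly mirroring case (iii) in that proof.

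To isolate $d = f = g = h = j = 0$, I would specialise the derived polynomials of $G_{2,4}$ in Lemma~\ref{lem:derived_polynomials_G_{2,4}} to $x = 0$: the first form of (E1) immediately yields $jwyz = 0$, hence $j = 0$, while (E2) $= 0$ forces (E3) $= 0$ and (E4) $= 0$, giving $dz + fw = 0$ and $gy + hw = 0$. To collapse these pairs to four individual equalities, I would apply (ETI6) and (ETI7) to carefully chosen small alternating dimaps in which the first edge is a proper $\omega$-semiloop (respectively $\omega^{2}$-semiloop), while the three reductions yield dimaps whose $F$-values are linearly independent monomials in $w, y, z$; since the left-hand sides vanish by Corollaries~\ref{cor:omega-semiloop} and~\ref{cor:omega2-semiloop}, every coefficient must vanish separately.

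The converse direction mirrors the proof of Theorem~\ref{thm:x=y=z=0}: multiplicativity and $F(\emptyset) = 1$ are immediate, and (ETI1)--(ETI8) are verified by induction on $\abs{E(D)}$ with the usual bookkeeping of how $k, \instar, \aface, \cface$ change under each reduction. Every case where $F(D) = 0$ is matched by a right-hand side all of whose surviving coefficients ($x$, $d$, $f$, $g$, $h$, $j$) equal zero, and every case with $F(D) \ne 0$ reduces to a verification as in Theorem~\ref{thm:Extended_Tutte_Invariant}. I expect the main obstacle to be the forward argument that $dz + fw = 0$ and $gy + hw = 0$ collapse to $d = f = g = h = 0$: this requires constructing dimaps where the three semiloop reductions genuinely produce independent monomial contributions, rather than degenerating into tautological identities already implied by the $G_{2,4}$-constraints.
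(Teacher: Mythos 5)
Your outline follows the same route the paper takes: the paper disposes of this theorem in one sentence by declaring it ``similar to the proof of Theorem~\ref{thm:x=y=z=0}, with some extra routine details'', and everything you describe --- extracting $yz=aw+by+cz$ from the two derived polynomials of $G_{1,3}$, killing $F$ on any dimap containing a proper $1$-loop, a proper $\omega$- or $\omega^{2}$-semiloop or a proper edge via Corollaries~\ref{cor:omega-semiloop}, \ref{cor:omega2-semiloop} and \ref{cor:proper_edge}, identifying the surviving dimaps with those satisfying $\instar(D)=k(D)$, and running the two inductions in the style of Theorems~\ref{thm:Extended_Tutte_Invariant} and~\ref{thm:x=y=z=0} --- is a faithful, and in fact more explicit, instantiation of that template.

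The one step you flag but do not carry out is, however, a genuine gap, and it is the only non-routine content of the forward direction. Setting $x=0$ in the computed examples of Corollary~\ref{cor:derived_polynomials_G_{1,3}_G^{a}_{2,3}_G^{c}_{2,3}} and Lemma~\ref{lem:derived_polynomials_G_{2,4}} yields exactly $j=0$, $dz+fw=0$ and $gy+hw=0$, and nothing more: in every one of these examples the $1$-reduction and the $\omega^{2}$-reduction of the proper $\omega$-semiloop (resp.\ the $1$- and $\omega$-reductions of the proper $\omega^{2}$-semiloop) produce minors whose $F$-values stand in the fixed ratio $z:w$ (resp.\ $y:w$), so the same single linear relation is reproduced each time. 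To conclude $d=f=0$ and $g=h=0$ as the theorem asserts, you must exhibit a plane alternating dimap $D$ with a proper $\omega$-semiloop $r$ for which $\bigl(F(D[1]r),F(D[\omega^{2}]r)\bigr)$ is \emph{not} proportional to $(z,w)$ --- for instance one where exactly one of the two values vanishes --- and likewise for $\omega^{2}$-semiloops; until such a dimap is produced it is not even clear that the stronger conditions are necessary rather than the two linear relations. Your proposal asserts that ``carefully chosen small alternating dimaps'' with ``linearly independent monomial'' contributions exist but constructs none, and the naive candidates (enlarging $G^{a}_{2,3}$ by extra nested loops, or taking c-unions with ultraloops) all reproduce the ratio $z:w$. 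This is precisely the point you would need to settle to have a proof; the paper's own text gives no help here, so you cannot lean on it, and the remainder of your argument (the zero cases, the induction for $\instar(D)=k(D)$ using $yz=aw+by+cz$ in the $1$-semiloop case, and the converse verification of (ETI1)--(ETI8)) is correct and routine.
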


\begin{proof}
	The proof is similar to the proof in Theorem~\ref{thm:x=y=z=0}, with some extra routine details~\cite{YowPHD2019}.
\end{proof}

Similarly, by using triality, we have
\begin{corollary}\label{cor:ETI_y=0_z=0}
	Let $ S=(w,x,y,z,a,b,c,d,e,f,g,h,i,j,k,l) $ be a parameter sequence.
	\begin{enumerate}
		\item[a)] For $ w,x,z \ne 0 $ and $ y=0 $, a function $ P $ is an extended Tutte invariant with respect to $ S $ for every alternating dimap $ D $ of genus zero if and only if
		\begin{equation*}
			P(D) = \left\{ 
			\begin{array}{ll}
				w^{k(D)} \cdot x^{\instar(D)-k(D)} \cdot z^{\cface(D)-k(D)},	& \text{if \aface(D)=k(D),}\\
				0,																& \text{otherwise,}
			\end{array} \right.		
		\end{equation*}
		with $ a=c=h=i=l=0 $ and $ xz=dz+ex+fw $.
		
		\item[b)] For $ w,x,y \ne 0 $ and $ z=0 $, a function $ P $ is an extended Tutte invariant with respect to $ S $ for every alternating dimap $ D $ of genus zero if and only if
		\begin{equation*}
			P(D) = \left\{ 
			\begin{array}{ll}
				w^{k(D)} \cdot x^{\instar(D)-k(D)} \cdot y^{\aface(D)-k(D)},	& \text{if \cface(D)=k(D),}\\
				0,																& \text{otherwise.}
			\end{array} \right.		
		\end{equation*}
		with $ a=b=e=f=k=0 $ and $ xy=gy+hw+ix $.\qed
	\end{enumerate}
\end{corollary}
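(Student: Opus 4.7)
The plan is to derive both parts as direct consequences of Theorem~\ref{thm:ETI_x=0} via the triality identities of Theorem~\ref{thm:trial_ETI}, since the hypothesis $y=0$ (respectively $z=0$) is nothing but the hypothesis $x=0$ applied to a trial image of $D$. Because the two triality identities permute both the variables in the parameter sequence and the combinatorial statistics $\instar, \aface, \cface$, the entire content of parts (a) and (b) — the closed formula in the nontrivial case, the characterisation of the nontrivial case, and the coefficient identities — should drop out by a single substitution.

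For part (a), I would apply the second identity in Theorem~\ref{thm:trial_ETI}, namely $F(D; w,x,y,z,a,b,c,d,e,f,g,h,i,j,k,l) = F(D^{\omega^{2}}; w,y,z,x,f,d,e,i,g,h,c,a,b,l,j,k)$. Under the hypothesis $w,x,z\neq 0$ and $y=0$, the new parameter sequence $P'$ has $w',y',z' \neq 0$ and $x'=y=0$, so Theorem~\ref{thm:ETI_x=0} applies to $D^{\omega^{2}}$ with $P'$. Reading off the required vanishings $d'=f'=g'=h'=j'=0$ in terms of the original parameters via the positional substitution $P'=(w,y,z,x,f,d,e,i,g,h,c,a,b,l,j,k)$ yields exactly $i=h=c=a=l=0$, and the coefficient identity $y'z'=a'w'+b'y'+c'z'$ becomes $xz=dz+ex+fw$, as asserted. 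Translating the formula and the nontrivial-case condition back to $D$ uses the standard triality effect on statistics ($k$ is preserved, and $(\instar,\aface,\cface)$ is cyclically permuted to $(\aface,\cface,\instar)$ under $\omega^{2}$), so $\instar(D^{\omega^{2}})=k(D^{\omega^{2}})$ becomes $\aface(D)=k(D)$ and the monomial $w^{k}(y')^{\aface(D^{\omega^{2}})-k}(z')^{\cface(D^{\omega^{2}})-k}$ becomes $w^{k(D)}x^{\instar(D)-k(D)}z^{\cface(D)-k(D)}$.

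Part (b) is handled identically using the first triality identity $F(D;\cdot) = F(D^{\omega}; w,z,x,y,h,i,g,b,c,a,e,f,d,k,l,j)$. With $z=0$ this places $0$ in the $x'$-slot of $P'$, Theorem~\ref{thm:ETI_x=0} applies to $D^{\omega}$, and the positional reading of $d'=f'=g'=h'=j'=0$ gives $b=a=e=f=k=0$ while $y'z'=a'w'+b'y'+c'z'$ becomes $xy=gy+hw+ix$. Since $\omega$ cyclically permutes $(\instar,\aface,\cface)$ to $(\cface,\instar,\aface)$, the trigger $\instar(D^{\omega})=k(D^{\omega})$ becomes $\cface(D)=k(D)$, and the monomial expression translates into the claimed form.

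The only conceptual step beyond bookkeeping is the translation of the combinatorial statistics under triality; however this is implicit in the consistency of Theorem~\ref{thm:trial_ETI} with the closed-form expression of Theorem~\ref{thm:Extended_Tutte_Invariant} (matching exponents on $w,x,y,z$ forces the permutation of $\instar,\aface,\cface$ on both sides), so no independent verification of triality's action on these statistics is needed. The main place to be careful will be the positional bookkeeping of the sixteen parameters under the two cyclic substitutions — I would present this as two short tables or inline enumerations to make the translations $d'\mapsto b,\,f'\mapsto a,\ldots$ (for $\omega$) and $d'\mapsto i,\,f'\mapsto h,\ldots$ (for $\omega^{2}$) transparent.
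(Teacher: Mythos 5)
Your proposal is correct and is exactly the route the paper takes: the corollary is stated as an immediate consequence of Theorem~\ref{thm:ETI_x=0} ``by using triality,'' i.e.\ by applying the two identities of Theorem~\ref{thm:trial_ETI} and translating parameters and the statistics $\instar,\aface,\cface$ accordingly. Your positional bookkeeping (the vanishing sets $a=c=h=i=l=0$ and $a=b=e=f=k=0$, the identities $xz=dz+ex+fw$ and $xy=gy+hw+ix$, and the translated trigger conditions $\aface(D)=k(D)$ and $\cface(D)=k(D)$) all check out.
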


\subsection{Restricted Alternating Dimaps, Independent Parameters}\label{sec:well_defined_ETI}

In order for an extended Tutte invariant to exist for an alternating dimap $ D $, every edge-ordering of $ D $ must give an identical derived polynomial, when $ D $ is reduced using these edge-orderings.
%It must be well defined for every alternating dimap in its domain.

In \cref{sec:characterisations_of_ETI}, we identified restrictions on the parameters that ensure extended Tutte invariants exist for all alternating dimaps when the restrictions are satisfied (see Theorem~\ref{thm:Extended_Tutte_Invariant}). We now investigate the conditions on an alternating dimap that are required in order to obtain an extended Tutte invariant for it, without any restriction on the parameters. The fact that no restriction is imposed on the parameters implies that the variables $ w,x,y,z,a,b,c,d,e,f,g,h,i,j,k,l $ are all independent, and will be treated as indeterminates. 

To formalise this distinction, we need a more specific extended Tutte invariant. The \emph{complete} extended Tutte invariant of an alternating dimap takes values in a ring $ \mathbb{E}[w,x,y,z,a,b,c,d,e,f,g,h,i,j,k,l] $, where $ \mathbb{E} $ is a field. The ring is considered to be a subset of the field of fractions $ \mathbb{F} \coloneqq \mathbb{E}(w,x,y,z,a,b,c,d,e,f,g,h,i,j,k,l) $ whose numerators and denominators are in $ \mathbb{E}[w,x,y,z,a,b,c,d,e,f,g,h,i,j,k,l] $.

We will determine the domain of the complete extended Tutte invariant, which is the set of alternating dimaps for which it exists.

Let $ D $ be an alternating dimap. For $ i \in \{1,2, \ldots, \abs{E(D)} \} $ and $ \mu_{i} \in \{1, \omega, \omega^{2}\} $, a \emph{reduction sequence} for a given edge-ordering $ \mathcal{O}=e_{1}e_{2} \ldots e_{i} $ of $ D $ is a sequence of reductions $ \mathcal{R}=\mu_{1},\mu_{2}, \ldots ,\mu_{i} $. By reducing $ D $ using the edge-ordering $ \mathcal{O} $ and the reduction sequence $ \mathcal{R} $, we obtain the minor $ D[\mu_{1}]e_{1}[\mu_{2}]e_{2} \ldots [\mu_{i}]e_{i} $, which is denoted by $ D[\mathcal{R}]\mathcal{O} $.

For a given edge-ordering, an extended Tutte invariant is constructed using a set of sequences of reductions of the edges, where the edges are reduced in the given order. For each sequence of reductions, a factor is introduced each time an edge is reduced in the sequence. For instance, the factor of $ x $ is introduced when a proper $ 1 $-loop is reduced in Definition~\ref{def:Extended_Tutte_Invariant}. Note that if more than one reduction is performed on an edge (i.e., a non-triloop edge in Definition~\ref{def:Extended_Tutte_Invariant}), the type of reduction operation determines the factor that will be introduced for each minor.

For $ i \in \{1,2,\ldots,\abs{E(D)}\} $, suppose $ H $ is a minor of $ D $ that is obtained by reducing the first $ i $ edges of $ D $. Then, the first $ i $ factors introduced by these $ i $ reductions form the \emph{monomial of} $ H $ with respect to $ D $ and the reductions used. In Figure~\ref{fig:monomial}, the first two edges of an alternating dimap $ D $ are reduced in the extended Tutte invariant by using $ \mathcal{O}=pq $.
\begin{figure}[t]
	\centering
	\includegraphics[scale=0.75]{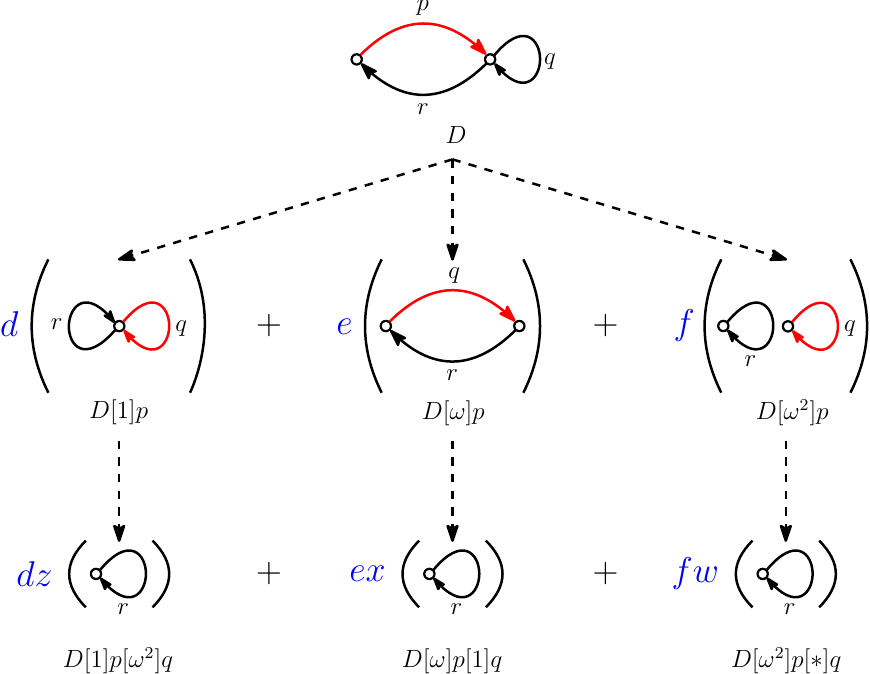}
	\caption{Reductions on the first two edges of an alternating dimap $ D $}
	\label{fig:monomial}
\end{figure}
Since only one edge $ p $ is reduced to obtain the minor $ D[\omega]p $, the factor $ e $ is also the monomial of this minor. On the other hand, for the minor $ D[\omega^{2}]p[*]q $, two factors $ f $ and $ w $ are obtained. Hence, we have $ fw $ as the monomial of $ D[\omega^{2}]p[*]q $.

\begin{proposition}\label{pro:well_defined}
	If an extended Tutte invariant is well defined for an alternating dimap $ D $, then this holds for any minor of $ D $. \qed
\end{proposition}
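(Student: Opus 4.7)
The plan is to reduce the proposition to a \emph{one-step} form: if $F$ is well defined for an alternating dimap $D$, then $F$ is well defined for $D[\mu]e$ for every edge $e \in E(D)$ and admissible reduction $\mu \in \{1, \omega, \omega^{2}\}$. The full proposition then follows by induction on the length of a reduction sequence witnessing $D'$ as a minor of $D$: the base case $D' = D$ is immediate, and at each stage an intermediate minor $D[\mu_{1}]e_{1}\cdots[\mu_{i}]e_{i}$ inherits well-definedness from its predecessor $D[\mu_{1}]e_{1}\cdots[\mu_{i-1}]e_{i-1}$ via the one-step form.

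For the one-step form, I would fix two edge-orderings $\sigma_{1}$ and $\sigma_{2}$ of $E(D[\mu]e)$ and extend each to an ordering $\tau_{i}$ of $E(D)$ by placing $e$ first, invoking the paper's convention that a newly created edge following an $\omega$- or $\omega^{2}$-reduction inherits the position of the edge it replaces. Well-definedness of $F$ for $D$ then yields $F_{\tau_{1}}(D) = F_{\tau_{2}}(D)$ as elements of $\mathbb{E}[w,x,y,z,a,b,c,d,e,f,g,h,i,j,k,l]$. Expanding each side by the recursion at the first edge $e$ (one of (ETI1)--(ETI8), determined by the type of $e$) converts this equality into a statement involving $F$-values on the minors $D[\nu]e$ for $\nu \in \{1, \omega, \omega^{2}\}$ under the induced orderings obtained from $\sigma_{1}$ and $\sigma_{2}$.

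When $e$ is an ultraloop or a proper triloop, the recursion has a single term $F_{\tau_{i}}(D) = c \cdot F_{\sigma_{i}}(D[\mu]e)$ with $c \in \{w,x,y,z\}$, and cancelling the nonzero indeterminate $c$ in the field of fractions $\mathbb{F}$ gives $F_{\sigma_{1}}(D[\mu]e) = F_{\sigma_{2}}(D[\mu]e)$ at once. The main obstacle arises when $e$ is a proper semiloop or a proper edge, for which the recursion expands into three terms with distinct leading indeterminates $\alpha, \beta, \gamma$ from the parameter sequence; the equality then reduces to
\[
\alpha \Delta_{1} + \beta \Delta_{\omega} + \gamma \Delta_{\omega^{2}} = 0,
\]
where $\Delta_{\nu} = F_{\sigma_{1}}(D[\nu]e) - F_{\sigma_{2}}(D[\nu]e)$. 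The subtlety is that each $\Delta_{\nu}$ may itself involve $\alpha, \beta, \gamma$ through subsequent reductions of analogous edge types inside the minors $D[\nu]e$, so the three summands are not trivially separable by inspection. I would dispense with this by exploiting that the \emph{outer} occurrences of $\alpha, \beta, \gamma$ contribute an extra factor to each summand that is not cancelled inside $\Delta_{\nu}$: by a degree-counting argument in these indeterminates, or equivalently by specialising the parameters in $\mathbb{F}$ to isolate each summand in turn, one extracts $\Delta_{1} = \Delta_{\omega} = \Delta_{\omega^{2}} = 0$. In particular $\Delta_{\mu} = 0$, which is the required identity.
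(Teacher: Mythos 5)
The paper offers no argument for this proposition at all (it is stated with a terminal \qed), so there is nothing to compare your route against except the claim itself. Your skeleton --- induction on the length of a reduction sequence, reduced to a one-step lemma, with the single-branch cases (ultraloop, proper triloops) handled by cancelling a nonzero indeterminate in the integral domain $\mathbb{E}[w,\dots,l]$ --- is the natural one, and those parts are sound for the complete extended Tutte invariant.

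The gap is precisely at the ``subtlety'' you flag, and your proposed resolution does not close it. From $\alpha\Delta_{1}+\beta\Delta_{\omega}+\gamma\Delta_{\omega^{2}}=0$ one cannot extract $\Delta_{1}=\Delta_{\omega}=\Delta_{\omega^{2}}=0$ by degree counting or by specialisation: every monomial of each of the three summands has the same total degree $\abs{E(D)}$ in the same sixteen commuting indeterminates, and $\alpha,\beta,\gamma$ reappear inside each $\Delta_{\nu}$ whenever the minor $D[\nu]e$ still contains edges of the relevant type, so an ``outer'' occurrence of $\alpha$ is algebraically indistinguishable from an ``inner'' occurrence of $\alpha$ in $\beta\Delta_{\omega}$. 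Specialising $\beta=\gamma=0$ shows only that $\Delta_{1}$ lies in the ideal $(\beta,\gamma)$, not that it vanishes, and nothing in the algebra alone excludes a cancellation pattern such as $\Delta_{1}=\beta P$, $\Delta_{\omega}=-\alpha P$, $\Delta_{\omega^{2}}=0$ with $P\neq 0$. Ruling such patterns out requires structural information about which derived polynomials the minors $D[\nu]e$ can actually produce, i.e., essentially the content of Theorem~\ref{thm:triloops_iff_ETI} --- which in the paper is itself deduced from this proposition, so that route would be circular. A second, related defect: the paper also invokes Proposition~\ref{pro:well_defined} for the c-Tutte invariant (in the proof of Theorem~\ref{thm:well defined_c-Tutte}), where the three coefficients in the proper-edge case are specialised to $1,0,1$ as in (TC5); there your appeal to ``distinct leading indeterminates'' is unavailable from the outset, and the derived identity carries no information whatsoever about $\Delta_{\omega}$ even though $D[\omega]e$ is still a minor of $D$. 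The one-step lemma, which is the entire content of the proposition, therefore remains unproved.
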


\begin{lemma}\label{lem:not_well_defined_G{1,3}}
	The complete extended Tutte invariant is not well defined for the alternating dimap $ G_{1,3} $.
\end{lemma}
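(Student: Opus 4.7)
The plan is to invoke Corollary~\ref{cor:derived_polynomials_G_{1,3}_G^{a}_{2,3}_G^{c}_{2,3}}(a), which lists the two (and only two) distinct derived polynomials for $G_{1,3}$, namely
\[
P_1 \;=\; wyz \qquad \text{and} \qquad P_2 \;=\; aww + bwy + cwz,
\]
and then observe that in the setting of the complete extended Tutte invariant, where $w,x,y,z,a,b,c,\ldots,l$ are treated as indeterminates in the polynomial ring $\mathbb{E}[w,x,y,z,a,b,\ldots,l]$, these two polynomials are distinct. Hence by the definition of ``well defined'' given at the start of \cref{sec:well_defined_ETI}, the complete extended Tutte invariant cannot be well defined for $G_{1,3}$.

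First I would recall the definition: the complete extended Tutte invariant is well defined for $D$ precisely when every edge-ordering of $D$ yields the same derived polynomial in the indeterminates. Next I would cite Corollary~\ref{cor:derived_polynomials_G_{1,3}_G^{a}_{2,3}_G^{c}_{2,3}}(a) to reduce the question to comparing $P_1$ and $P_2$. For concreteness I might briefly indicate where each polynomial comes from: $P_1$ arises from an edge-ordering that begins with one of the two proper triloops of $G_{1,3}$ (applying \hyperref[def:Extended_Tutte_Invariant]{(ETI3)} or \hyperref[def:Extended_Tutte_Invariant]{(ETI4)} first, then ultimately reducing the remaining two loops as ultraloops/triloops), while $P_2$ arises from beginning with the proper $1$-semiloop and applying \hyperref[def:Extended_Tutte_Invariant]{(ETI5)}, after which each of the three resulting minors is reduced to a single ultraloop contributing a factor $w$.

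Finally I would note that $P_1 - P_2 \ne 0$ in $\mathbb{E}[w,x,y,z,a,b,c,\ldots,l]$: the monomial $wyz$ appears in $P_1$ with coefficient $1$, whereas $P_2$ is an $\mathbb{E}$-linear combination of the distinct monomials $w^2 a,\; wyb,\; wzc$, none of which equals $wyz$ (since $a,b,c$ are independent indeterminates, not elements of $\mathbb{E}$). Therefore $P_1 \ne P_2$ in $\mathbb{E}[w,x,y,z,a,b,\ldots,l]$, so two edge-orderings of $G_{1,3}$ produce distinct derived polynomials, which contradicts the well-definedness condition. This concludes the proof. The argument is essentially immediate once Corollary~\ref{cor:derived_polynomials_G_{1,3}_G^{a}_{2,3}_G^{c}_{2,3}} is in hand; there is no genuine obstacle, only the need to be explicit that ``indeterminates'' means independence in the polynomial ring so that no algebraic identification can collapse $P_1$ onto $P_2$.
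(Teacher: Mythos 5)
Your proposal is correct and follows essentially the same route as the paper: the paper's own (sketched) proof also reduces $G_{1,3}$ under different edge-orderings to obtain the two derived polynomials $wyz$ and $aww+bwy+cwz$ and concludes from their distinctness. Your additional explicit appeal to Corollary~\ref{cor:derived_polynomials_G_{1,3}_G^{a}_{2,3}_G^{c}_{2,3}}(a) and the remark that the monomials cannot coincide because the parameters are independent indeterminates merely fills in details the paper leaves implicit.
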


\begin{proof}[Proof (sketch)]
	By reducing the alternating dimap $ G_{1,3} $ using different edge-orderings, we obtain two distinct derived polynomials $ wyz $ and $ aww + bwy + cwz $.
\end{proof}

\begin{lemma}\label{lem:not_well_defined_G{2,3}}
	The complete extended Tutte invariant is not well defined for either of the alternating dimaps $ G_{2,3} $.
\end{lemma}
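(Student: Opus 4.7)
The plan is to mirror the strategy used for $G_{1,3}$ in Lemma~\ref{lem:not_well_defined_G{1,3}} and simply invoke the enumeration of derived polynomials already carried out in Corollary~\ref{cor:derived_polynomials_G_{1,3}_G^{a}_{2,3}_G^{c}_{2,3}}. Parts (b) and (c) of that corollary state that the only distinct derived polynomials for $G^a_{2,3}$ are $wxz$ and $dwz + ewx + fww$, and the only distinct derived polynomials for $G^c_{2,3}$ are $wxy$ and $gwy + hww + iwx$. Since the complete extended Tutte invariant takes values in $\mathbb{E}[w,x,y,z,a,b,c,d,e,f,g,h,i,j,k,l]$ with all sixteen parameters treated as algebraically independent indeterminates, demonstrating that the invariant fails to be well defined reduces to showing that, in each of the two cases, the listed pair of polynomials is not equal as an element of this polynomial ring.

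For $G^a_{2,3}$ this is immediate by inspection of monomials: $wxz$ involves none of the indeterminates $d$, $e$, $f$, whereas every monomial appearing in $dwz + ewx + fww$ contains exactly one of them, so the two polynomials cannot coincide in the free polynomial ring. The same monomial argument applies verbatim to $G^c_{2,3}$ with the roles of $(d,e,f)$ and $(y)$ replaced by $(g,h,i)$ and $(y)$ respectively: $wxy$ contains none of $g,h,i$, while every monomial of $gwy+hww+iwx$ contains exactly one of them.

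Concretely, I would exhibit two explicit edge-orderings in each case (paralleling the construction in Lemma~\ref{lem:derived_polynomials_G_{2,4}} and its corollary) that realise the two different polynomials: one in which the first edge reduced is the unique triloop of the dimap (yielding the pure monomial form $wxz$ or $wxy$), and one in which the first edge reduced is a non-triloop edge (yielding the three-term form via \hyperref[def:Extended_Tutte_Invariant]{(ETI6)} for $G^a_{2,3}$ and \hyperref[def:Extended_Tutte_Invariant]{(ETI7)} for $G^c_{2,3}$). From these two orderings the two distinct derived polynomials are read off directly.

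There is no real obstacle here: the only point to be careful about is the formal setting, namely that well-definedness for the \emph{complete} extended Tutte invariant means equality in the polynomial ring without imposing any of the relations \eqref{eqn:necessary_conditions_for_ETI_3} or \eqref{eqn:necessary_conditions_for_ETI_4}. Once this is noted, the conclusion follows in one line from Corollary~\ref{cor:derived_polynomials_G_{1,3}_G^{a}_{2,3}_G^{c}_{2,3}}(b) and (c).
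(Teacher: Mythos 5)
Your proposal is correct and follows essentially the same route as the paper's own (sketched) proof: exhibit two edge-orderings for each of $G^{a}_{2,3}$ and $G^{c}_{2,3}$ yielding the derived polynomials $wxz$ versus $dwz+ewx+fww$ and $wxy$ versus $gwy+hww+iwx$, and observe these are distinct in the polynomial ring. Your added monomial-inspection argument for distinctness is a welcome explicit justification of a step the paper leaves implicit, but it is not a different approach.
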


\begin{proof}[Proof (sketch)]
	Since there exist two possibilities for $ G_{2,3} $, we consider two alternating dimaps $ G^{a}_{2,3} $ and $ G^{c}_{2,3} $ separately.
	
	By reducing the alternating dimap $ G^{a}_{2,3} $ using different edge-orderings, we obtain two distinct derived polynomials $ wxz $ and $ dwz + ewx + fww $. Similarly, the alternating dimap $ G^{c}_{2,3} $ gives $ wxy $ and $ gwy + hww + iwx $.
\end{proof}

We now give the conditions required in order to obtain a well defined complete extended Tutte invariant.
\begin{theorem}\label{thm:triloops_iff_ETI}
	The complete extended Tutte invariant is well defined for an alternating dimap $ D $ if and only if $ D $ contains only triloops.
\end{theorem}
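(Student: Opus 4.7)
The plan is to prove the two implications separately. The forward direction falls out immediately from the minor-characterisation results already established, while the backward direction requires an induction that identifies the invariant with an explicit closed-form monomial.

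For the forward direction I argue by contraposition. Suppose $D$ contains an edge that is not a triloop. By Corollary~\ref{cor:minors}, $D$ has $G_{1,3}$, $G^{a}_{2,3}$, or $G^{c}_{2,3}$ as a minor. By Lemmas~\ref{lem:not_well_defined_G{1,3}} and~\ref{lem:not_well_defined_G{2,3}}, the complete extended Tutte invariant fails to be well defined on each of these three small dimaps. Applying the contrapositive of Proposition~\ref{pro:well_defined} then yields that the invariant is not well defined on $D$.

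For the backward direction I shall prove by induction on $\abs{E(D)}$ that if every edge of $D$ is a triloop then, regardless of the edge-ordering, the invariant evaluates to the single monomial
\[
G(D) \;:=\; w^{k(D)}\,x^{\instar(D)-k(D)}\,y^{\aface(D)-k(D)}\,z^{\cface(D)-k(D)}.
\]
The empty case is immediate. For the inductive step, pick any edge $e$; since $e$ is a triloop the three reductions $D[1]e$, $D[\omega]e$, $D[\omega^{2}]e$ coincide, so exactly one of (ETI1)--(ETI4) applies and produces a single factor $c(e)\in\{w,x,y,z\}$ determined by the type of $e$. Two ingredients are required: (i) $D[*]e$ is again triloop-only, so the inductive hypothesis may be invoked; and (ii) $G(D)=c(e)\cdot G(D[*]e)$. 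Claim~(ii) is direct bookkeeping of how each reduction affects $k$, $\instar$, $\aface$, $\cface$: deleting an ultraloop decreases each of the four quantities by one (giving $w$); contracting a proper $1$-loop absorbs the degree-two head into the tail and decreases only $\instar$, with the two faces adjacent to $e$ surviving intact (giving $x$); and deleting a proper $\omega$- or $\omega^{2}$-loop removes only the single-edge a-face or c-face it bounds (giving $y$ or $z$). Combined with the inductive hypothesis, (i) and (ii) yield $F(D)=c(e)\cdot G(D[*]e)=G(D)$ for every first edge $e$, so every edge-ordering produces the same polynomial $G(D)$.

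The main obstacle will be claim~(i), that each triloop reduction preserves the triloop-only property. I will first establish that any all-triloop dimap has a very restricted structure: every connected component is either an isolated vertex carrying an ultraloop, a single vertex carrying only loops (a multiloop configuration), or a directed cycle in which every vertex has degree two. This uses the fact that a non-loop triloop is necessarily a $1$-loop whose head has degree two, together with an in-degree equals out-degree parity argument showing that a vertex carrying a loop cannot also carry any non-loop edge in a triloop-only dimap. Each of the four reduction types then acts sufficiently locally on this structure: ultraloop deletion removes an entire component; $\omega$- and $\omega^{2}$-loop deletions merely simplify a multiloop (and may collapse its last remaining loop into an ultraloop); and a proper $1$-loop contraction shortens a cycle by one (or collapses a digon to an ultraloop). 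Verifying this structural characterisation, together with the case-by-case preservation check, is the most technical step and completes the argument.
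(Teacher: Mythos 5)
Your proof is correct and takes essentially the same approach as the paper: the forward direction (contraposition via Corollary~\ref{cor:minors}, the non-well-definedness lemmas for $G_{1,3}$ and $G_{2,3}$, and Proposition~\ref{pro:well_defined}) is identical, and your backward induction rests on the same key fact the paper invokes, namely that reducing a triloop leaves every remaining edge a triloop contributing a fixed factor, so the resulting product is independent of the edge-ordering. You simply make explicit, via the closed-form monomial and the structural classification of triloop-only dimaps, the type-preservation claim that the paper asserts without detailed proof.
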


\begin{proof}
	The forward implication is proved by contrapositive. Let $ D $ be an alternating dimap that contains at least one non-triloop edge. By Corollary~\ref{cor:minors}, the alternating dimap $ D $ contains $ G_{1,3} $ or $ G_{2,3} $ as a minor. By Lemma~\ref{lem:not_well_defined_G{1,3}} and Lemma~\ref{lem:not_well_defined_G{2,3}}, the complete extended Tutte invariant is not well defined for $ G_{1,3} $ and $ G_{2,3} $, respectively. By Proposition~\ref{pro:well_defined}, the complete extended Tutte invariant is then not well defined for $ D $. Hence, the forward implication follows.
	
	Conversely, suppose an alternating dimap $ D $ contains only triloops. This implies that every edge in each connected component of $ D $ is of the same type. By Definition~\ref{def:Extended_Tutte_Invariant}, each time a triloop is chosen and reduced, one factor $ w,\ x,\ y $ or $ z $ is introduced. Suppose $ r \in E(D) $ is the first edge in a given edge-ordering. By using the given edge-ordering, the triloop $ r $ is deleted after the first reduction operation. All the other edges in $ D\setminus r $ remain as triloops of the same type as they were in $ D $. This is always true regardless of which edge is first reduced in $ D $. In other words, the edge-ordering is inconsequential. In addition, the final edge to be reduced in each component is always an ultraloop (an improper triloop). Since the complete extended Tutte invariant for alternating dimaps is multiplicative, and multiplication is commutative, the complete extended Tutte invariant is well defined for $ D $.
\end{proof}

\section{Tutte Invariants That Extend the Tutte Polynomial}\label{sec:Tutte_invariant_vs_Tutte_polynomial}

Recall from \cite{Tutte1954} that the Tutte polynomial $ T(G;x,y) $ of a graph $ G $ has the following deletion-contraction recurrence, for any $ e \in E(G) $: 
	\[ 
	T(G;x,y) = \left\{ 
	\begin{array}{ll}
		1,									& \text{if $ G $ is empty,}\\
		x\cdot T(G/e;x,y),					& \text{if $ e $ is a coloop,}\\
		y\cdot T(G \setminus e;x,y),		& \text{if $ e $ is a loop,}\\
		T(G \setminus e;x,y)+T(G/e;x,y),	& \text{otherwise.}
	\end{array} \right.
	\]

In addition to the extended Tutte invariant in Definition~\ref{def:Extended_Tutte_Invariant}, Farr~\cite{Farr2018} defined two other Tutte invariants, namely $ T_c(D;x,y) $ and $ T_a(D;x,y) $ which are analogues of the Tutte polynomial. Note that $ T_c(D;x,y) $ and $ T_a(D;x,y) $ are two special cases of extended Tutte invariants.

In this section, we discuss these two invariants for alternating dimaps that are 2-cell embedded on an orientable surface of genus zero.
\begin{definition}\label{def:T_c}
	A \emph{c-Tutte invariant} for a class $ \mathcal{A} $ of alternating dimaps is a multiplicative invariant $ T_{c} $ for $ \mathcal{A} $ such that, for any alternating dimap $ D \in \mathcal{A} $ and $ e \in E(D) $,
	\begin{enumerate}
		\item if $ e $ is an ultraloop,
		\begin{equation*}
			T_c(D;x,y)=T_{c}(D\setminus e;x,y), \tag{TC1}
		\end{equation*}
		
		\item if $ e $ is a proper 1-loop or a proper $ \omega $-semiloop,
		\begin{equation*}
			T_c(D;x,y)=x\cdot T_{c}(D[\omega^{2}]e;x,y), \tag{TC2}
		\end{equation*}
		
		\item if $ e $ is a proper $ \omega $-loop or a proper 1-semiloop,
		\begin{equation*}
			T_c(D;x,y)=y\cdot T_{c}(D[1]e;x,y), \tag{TC3}
		\end{equation*}
		
		\item if $ e $ is a proper $ \omega^{2} $-loop or a proper $ \omega^{2} $-semiloop,
		\begin{equation*}
			T_c(D;x,y)=T_{c}(D[\omega]e;x,y), \tag{TC4}
		\end{equation*}
		
		\item otherwise,
		\begin{equation*}
			T_c(D;x,y)=T_{c}(D[1]e;x,y)+T_{c}(D[\omega^{2}]e;x,y). \tag{TC5}
		\end{equation*}
	\end{enumerate}
\end{definition}

\begin{definition}\label{def:T_a}
	An \emph{a-Tutte invariant} for a class $ \mathcal{A} $ of alternating dimaps is a multiplicative invariant $ T_{a} $ for $ \mathcal{A} $ such that, for any alternating dimap $ D \in \mathcal{A} $ and $ e \in E(D) $,
	\begin{enumerate}
		\item if $ e $ is an ultraloop,
		\begin{equation*}
			T_a(D;x,y)=T_{a}(D\setminus e;x,y),
		\end{equation*}
		
		\item if $ e $ is a proper 1-loop or a proper $ \omega^{2} $-semiloop,
		\begin{equation*}
			T_a(D;x,y)=x\cdot T_{a}(D[\omega]e;x,y),
		\end{equation*}
		
		\item if $ e $ is a proper $ \omega^{2} $-loop or a proper 1-semiloop,
		\begin{equation*}
			T_a(D;x,y)=y\cdot T_{a}(D[1]e;x,y),
		\end{equation*}
		
		\item if $ e $ is a proper $ \omega $-loop or a proper $ \omega $-semiloop,
		\begin{equation*}
			T_a(D;x,y)=T_{a}(D[\omega^{2}]e;x,y),
		\end{equation*}
		
		\item otherwise,
		\begin{equation*}
		T_a(D;x,y)=T_{a}(D[1]e;x,y)+T_{a}(D[\omega]e;x,y).
		\end{equation*}
	\end{enumerate}
\end{definition}

\noindent
\textbf{Remark:} For the reduction of a triloop $ e \in E(D) $, we have $ D[*]e=D[1]e=D[\omega]e=D[\omega^{2}]e=D\backslash e $.\\

\begin{theorem}\emph{\cite[Theorem 5.2]{Farr2018}}\label{thm:T_G=T_c_alt_c(G)}
	For any plane graph $ G $,
	\begin{equation*}
		T(G;x,y)=T_c(\emph{\hbox{alt}}_c(G);x,y)=T_a(\emph{\hbox{alt}}_a(G);x,y).
	\end{equation*}
\end{theorem}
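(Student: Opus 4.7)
The plan is to prove this by induction on $|E(G)|$, matching the deletion-contraction recurrence of $T(G;x,y)$ with the reduction recurrence defining $T_c$. In the base case $|E(G)|=0$, the alternating dimap $\hbox{alt}_c(G)$ has no edges and consists only of isolated vertices, so both sides equal $1$ by multiplicativity. For the inductive step, pick any edge $e=uv\in E(G)$ and let $e_1=u\to v$ and $e_2=v\to u$ be the pair of directed edges in $\hbox{alt}_c(G)$ that together bound a clockwise face of size $2$. The first task is to classify $e_1$ according to the role of $e$ in $G$: if $e$ is a loop in $G$ then $e_1$ is a proper $\omega$-loop (or an ultraloop, in the degenerate case where the component of $G$ containing $e$ is just a single loop); if $e$ is a bridge then $e_1$ is a proper $\omega$-semiloop (or a proper $1$-loop if the head of $e_1$ has degree one in $G$); and otherwise $e_1$ is a proper edge.

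In the ordinary case (when $e$ is neither a loop nor a bridge), I apply (TC5) to $e_1$, obtaining $T_c(\hbox{alt}_c(G)) = T_c(\hbox{alt}_c(G)[1]e_1) + T_c(\hbox{alt}_c(G)[\omega^2]e_1)$. The $[1]$-reduction contracts $e_1$, merging $u$ and $v$, and turns $e_2$ into a proper $\omega^2$-loop (the clockwise $2$-face collapses to a clockwise $1$-face bounded by $e_2$ alone); applying (TC4) then removes $e_2$ with factor $1$, yielding $\hbox{alt}_c(G/e)$. The $[\omega^2]$-reduction deletes both $e_1$ and $e_2$ and creates a new loop $r'=u\to u$, which is again a proper $\omega^2$-loop; applying (TC4) removes $r'$ with factor $1$, yielding $\hbox{alt}_c(G\setminus e)$. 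The inductive hypothesis therefore gives $T_c(\hbox{alt}_c(G)) = T(G/e;x,y) + T(G\setminus e;x,y) = T(G;x,y)$. The loop case (apply (TC3) with multiplier $y$) and the bridge case (apply (TC2) with multiplier $x$, together with the standard identity $T(G/e) = T(G\setminus e)$ for bridges) follow by analogous arguments, each producing a companion triloop that is subsequently eliminated for free by (TC1) or (TC4).

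The main obstacle, and the step requiring the most careful bookkeeping, is verifying the edge-type classification of $e_1$ and the structural claim that after the primary reduction the companion edge (either $e_2$ itself or the newly introduced $r'$) is always a proper $\omega^2$-loop whose further reduction produces precisely $\hbox{alt}_c$ applied to the appropriate minor of $G$. This requires a local analysis of the cyclic order of edges at $u$ and $v$ in $\hbox{alt}_c(G)$ and of the clockwise and anticlockwise faces incident to $e_1$ and $e_2$; multiplicativity of $T_c$ is then used to absorb any isolated vertex that a reduction introduces (for example, when one endpoint of a bridge is a pendant vertex, the $[\omega^2]$-reduction removes that vertex entirely, which differs from the corresponding isolated vertex in $\hbox{alt}_c(G\setminus e)$ only by a factor of $1$).

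The identity $T_a(\hbox{alt}_a(G);x,y) = T(G;x,y)$ is proved by the entirely symmetric argument with the roles of clockwise and anticlockwise faces interchanged throughout, or alternatively deduced from the $T_c$ version by invoking triality, since $\hbox{alt}_a(G)$ is obtained from $\hbox{alt}_c(G)$ by a suitable trial operation and the edge-type classification above is covariant under triality in the sense of Table~\ref{tab:trial_edge_type}.
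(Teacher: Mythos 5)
This statement is quoted from Farr~\cite[Theorem 5.2]{Farr2018} and the paper gives no proof of its own, so there is nothing internal to compare your argument against; what you have written is essentially the standard (and, as far as I can tell, Farr's) argument: induct on $\abs{E(G)}$, classify the two directed edges of the clockwise digon of $e$ according to whether $e$ is a loop, a bridge, or ordinary, match (TC2)--(TC5) against the deletion--contraction recurrence, and check that the companion triloop left over after the primary reduction is removed for free, landing exactly on $\hbox{alt}_c(G/e)$ or $\hbox{alt}_c(G\setminus e)$. Your case analysis and bookkeeping are sound, including the pendant-bridge and isolated-vertex subtleties.

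One small correction to your classification: when the component of $G$ containing the loop $e$ is a single vertex with that one loop, $e_1$ is still a \emph{proper} $\omega$-loop, not an ultraloop. The construction $\hbox{alt}_c$ always produces edges in pairs, so no single-edge component (hence no ultraloop) can ever arise in $\hbox{alt}_c(G)$; in that degenerate case the component is a c-multiloop of size two whose inner edge bounds an anticlockwise monogon and whose vertex has degree four, so $e_1$ is not a $1$-loop. This slip is harmless to your proof only because your primary classification (proper $\omega$-loop, rule (TC3) with factor $y$) is the correct one --- had you actually applied (TC1) there you would lose the factor $y$ that $T(\text{loop};x,y)=y$ requires. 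The ultraloop does legitimately appear one step later, as you note, after $e_1$ has been removed and $e_2$ survives as the sole edge of its component.
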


\subsection{Tutte Invariants with Dependent Paramaters for Arbitrary Alternating Dimaps}\label{subsec:c_a_Tutte_all}

We now determine when a c-Tutte invariant is well defined for all alternating dimaps of genus zero, using our results on extended Tutte invariants.

\begin{proposition}\label{pro:T_c_general}
	The c-Tutte invariant is well defined for all alternating dimaps of genus zero if and only if 
	\begin{equation*}
		x=\frac{1\pm \sqrt{3}i}{2}, \ y=\frac{1\mp \sqrt{3}i}{2}.
	\end{equation*}	
\end{proposition}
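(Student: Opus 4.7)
My plan is to view the c-Tutte invariant as a specialisation of the complete extended Tutte invariant and then simply invoke Theorem~\ref{thm:Extended_Tutte_Invariant}. The first step is to match the recurrences (TC1)--(TC5) in Definition~\ref{def:T_c} with the ETI cases (ETI1)--(ETI8) in Definition~\ref{def:Extended_Tutte_Invariant}. This is not completely obvious, because in (TC2)--(TC4) the c-Tutte invariant uses, for a triloop, a reduction operation that is \emph{not} the one employed by the matching ETI clause; however, as noted at the end of \cref{sec:reduction_operations}, $D[1]e=D[\omega]e=D[\omega^{2}]e$ whenever $e$ is a triloop, so the three reductions really do coincide in each of (TC2)--(TC4). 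This produces the parameter identification
\[
(w,x_{\textnormal{ETI}},y_{\textnormal{ETI}},z_{\textnormal{ETI}},a,b,c,d,e,f,g,h,i,j,k,l)
=(1,\,x,\,y,\,1,\,y,0,0,\,0,0,x,\,0,1,0,\,1,0,1),
\]
reading off $(w,x_{\textnormal{ETI}},y_{\textnormal{ETI}},z_{\textnormal{ETI}})$ from (TC1)--(TC4) and the remaining twelve coefficients from (TC2)--(TC5) applied to the proper semiloop cases and the proper-edge case.

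Next I would apply Theorem~\ref{thm:Extended_Tutte_Invariant}. Well-definedness of the c-Tutte invariant on all alternating dimaps is exactly well-definedness of this specialised ETI. Since $w=1\ne0$, the theorem is applicable provided also $x_{\textnormal{ETI}},y_{\textnormal{ETI}},z_{\textnormal{ETI}}\ne0$, i.e.\ $x,y\ne0$; any solution of the necessary conditions will retrospectively force this (see below). Substituting the identification above into \eqref{eqn:necessary_conditions_for_ETI_1}--\eqref{eqn:necessary_conditions_for_ETI_4}, equations \eqref{eqn:necessary_conditions_for_ETI_2} and \eqref{eqn:necessary_conditions_for_ETI_3} collapse to the tautologies $y=y$ and $x=x$, while
\[
\eqref{eqn:necessary_conditions_for_ETI_1}\ \Longrightarrow\ xy=x+y,
\qquad
\eqref{eqn:necessary_conditions_for_ETI_4}\ \Longrightarrow\ xy=1.
\]

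Finally I would solve this system. Combining gives $x+y=1$ and $xy=1$, so $x$ and $y$ are the two roots of $t^{2}-t+1=0$, yielding $x=\tfrac{1\pm\sqrt{3}\,i}{2}$ and $y=\tfrac{1\mp\sqrt{3}\,i}{2}$. The relation $xy=1$ also forces $x,y\ne0$, so the hypothesis of Theorem~\ref{thm:Extended_Tutte_Invariant} is met and, by that theorem, these values are both necessary and sufficient for well-definedness on all alternating dimaps.

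\textbf{Expected difficulty.} The only non-routine step is the initial identification of $T_c$ as an ETI. One must check that the ``swapped'' reductions in (TC2)--(TC4) agree with those of (ETI2)--(ETI4), which relies on the triloop-reduction equality $D[1]e=D[\omega]e=D[\omega^{2}]e$; otherwise the c-Tutte recurrence would fall strictly outside the ETI framework of Definition~\ref{def:Extended_Tutte_Invariant} and Theorem~\ref{thm:Extended_Tutte_Invariant} could not be invoked directly. Once the identification is in hand, the remainder is just substitution and solving a quadratic, and one can additionally observe as a sanity check that the two-polynomial agreements required by Corollary~\ref{cor:derived_polynomials_G_{1,3}_G^{a}_{2,3}_G^{c}_{2,3}} for $G_{1,3}$ and $G^{a}_{2,3}$ hold automatically after specialisation, while the one for $G^{c}_{2,3}$ reproduces $xy=1$.
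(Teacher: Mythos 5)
Your proposal is correct and follows essentially the same route as the paper: identify $T_c$ as the specialisation of the extended Tutte invariant with $x_{\mathrm{ETI}}=f$, $y_{\mathrm{ETI}}=a$, $w=z=h=j=l=1$ and the remaining coefficients zero, substitute into the necessary conditions \eqref{eqn:necessary_conditions_for_ETI_1}--\eqref{eqn:necessary_conditions_for_ETI_4} to get $xy=x+y=1$, and solve the resulting quadratic. Your explicit appeal to the triloop identity $D[1]e=D[\omega]e=D[\omega^{2}]e$ and your remark that $xy=1$ forces $x,y\neq 0$ (so Theorem~\ref{thm:Extended_Tutte_Invariant} applies) are details the paper's sketch leaves implicit, but the argument is the same.
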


\begin{proof}[Proof]
	We first consider the definitions of the extended Tutte invariant (see Definition~\ref{def:Extended_Tutte_Invariant}) and the c-Tutte invariant (see Definition~\ref{def:T_c}). Since the variables $ x $ and $ y $ are used in both definitions, we use $ \alpha $ and $ \beta $ instead of the variables $ x $ and $ y $, respectively, that are used for the c-Tutte invariant. By comparing the recurrences in the two definitions, we see that the c-Tutte invariant is an extended Tutte invariant with parameters
	\begin{equation}\label{eqn:CG1}
		x=f=\alpha, \ y=a=\beta, \ w=z=h=j=l=1, \ b=c=d=e=g=i=k=0.
	\end{equation}
	
	\begin{itemize}
		\item[i)] Suppose $ \alpha,\beta \ne 0 $. In this case, the hypothesis of Theorem~\ref{thm:Extended_Tutte_Invariant} is satisfied (since $ w=z=1 \ne 0 $, $ x=\alpha \ne 0 $ and $ y=\beta \ne 0 $). By substituting the respective values in (\ref{eqn:CG1}) into the necessary conditions in (\ref{eqn:necessary_conditions_for_ETI_1})--(\ref{eqn:necessary_conditions_for_ETI_4}), and solving the equations, we obtain
		\begin{equation}\label{eqn:CG2}
		\alpha\beta=\alpha+\beta=1.
		\end{equation}
		By solving (\ref{eqn:CG2}), and using the fact that $ \alpha=x $ and $ \beta=y $ in the c-Tutte invariant, we have
		\begin{equation*}\label{eqn:CG3}
			x=\frac{1\pm \sqrt{3}i}{2}, \ y=\frac{1\mp \sqrt{3}i}{2}.
		\end{equation*}
		
		\item[ii)] Suppose $ \alpha = 0 $ and $ \beta \ne 0 $. In this case, the hypothesis of Theorem~\ref{thm:ETI_x=0} is satisfied (since $ w=z=1 \ne 0 $, $ x=\alpha = 0 $ and $ y=\beta \ne 0 $). However, by Theorem~\ref{thm:ETI_x=0}, the fact that $ h=j=1 \ne 0 $ implies that we do not get a well defined extended Tutte invariant.
		
		\item[iii)] Suppose $ \alpha \ne 0 $ and $ \beta = 0 $. In this case, the hypothesis of Corollary~\ref{cor:ETI_y=0_z=0}(a) is satisfied (since $ w=z=1 \ne 0 $, $ x=\alpha \ne 0 $ and $ y=\beta = 0 $). However, by Corollary~\ref{cor:ETI_y=0_z=0}(a), the fact that $ h=l=1 \ne 0 $ implies that we do not get a well defined extended Tutte invariant.
		
		\item[iv)] Suppose $ \alpha=\beta = 0 $. In this case, the hypothesis of Theorem~\ref{thm:ETI_x=y=0} is satisfied (since $ w=z=1 \ne 0 $, $ x=\alpha = 0 $ and $ y=\beta = 0 $). However, by Theorem~\ref{thm:ETI_x=y=0}, the fact that $ h=1 \ne 0 $ implies that we do not get a well defined extended Tutte invariant.
	\end{itemize}
	
	The backward implication follows, by Theorem~\ref{thm:Extended_Tutte_Invariant}.
\end{proof}

\begin{corollary}\label{cor:T_c_general}
	The only c-Tutte invariants that are well defined for all alternating dimaps $ D $ of genus zero are
	\begin{equation*}
		T_{c}(D;\frac{1\pm \sqrt{3}i}{2},\frac{1\mp \sqrt{3}i}{2}) =
		\left( \frac{1\pm \sqrt{3}i}{2} \right)^{\instar(D)-\aface(D)}.
	\end{equation*}
	\qed
\end{corollary}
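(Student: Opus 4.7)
The plan is to combine Proposition~\ref{pro:T_c_general} with Theorem~\ref{thm:Extended_Tutte_Invariant}. The proposition already pins down the only admissible parameter pairs $(x,y)=\bigl((1\pm\sqrt{3}i)/2,\,(1\mp\sqrt{3}i)/2\bigr)$, so the task reduces to deriving the closed form of $T_c(D;x,y)$ at those values and verifying it agrees with the stated power of $\alpha := (1\pm\sqrt{3}i)/2$.

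First I would view $T_c$ as the specialisation of the extended Tutte invariant produced by the substitution (\ref{eqn:CG1}): set $w=z=1$, $x=f=\alpha$, $y=a=\beta$, with the remaining coefficients chosen so that the recurrences of Definition~\ref{def:T_c} align with those of Definition~\ref{def:Extended_Tutte_Invariant}. With $\alpha,\beta$ as in Proposition~\ref{pro:T_c_general}, the identities (\ref{eqn:CG2}) guarantee that the necessary conditions (\ref{eqn:necessary_conditions_for_ETI_1})--(\ref{eqn:necessary_conditions_for_ETI_4}) hold, and the hypothesis $w,x,y,z\neq 0$ of Theorem~\ref{thm:Extended_Tutte_Invariant} is also satisfied (since $w=z=1$ and $\alpha,\beta$ are non-zero complex numbers). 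Applying Theorem~\ref{thm:Extended_Tutte_Invariant} then yields
\[
T_c(D;\alpha,\beta) = 1^{k(D)}\cdot\alpha^{\instar(D)-k(D)}\cdot\beta^{\aface(D)-k(D)}\cdot 1^{\cface(D)-k(D)}.
\]

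The last step is to eliminate $\beta$ using $\alpha\beta=1$ from (\ref{eqn:CG2}). Writing $\beta=\alpha^{-1}$, the non-trivial factors telescope:
\[
\alpha^{\instar(D)-k(D)}\cdot\alpha^{-(\aface(D)-k(D))} = \alpha^{\instar(D)-\aface(D)},
\]
so $T_c(D;\alpha,\beta)=\alpha^{\instar(D)-\aface(D)}$, which is exactly the expression claimed in the corollary once $\alpha$ is restored to $(1\pm\sqrt{3}i)/2$.

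I anticipate no real obstacle: the heavy lifting has already been done in Theorem~\ref{thm:Extended_Tutte_Invariant} (which supplies the closed form of the extended Tutte invariant) and in Proposition~\ref{pro:T_c_general} (which isolates the admissible $(x,y)$). The only care required is to confirm that the recurrence coefficients in (\ref{eqn:CG1}) really do match the c-Tutte invariant recurrences of Definition~\ref{def:T_c}, and then to observe that the $k(D)$-exponent cancels cleanly — a one-line calculation exploiting $\alpha\beta=1$.
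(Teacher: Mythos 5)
Your proposal is correct and follows exactly the route the paper intends (the corollary is stated with only a \qed, as an immediate consequence of Proposition~\ref{pro:T_c_general} and Theorem~\ref{thm:Extended_Tutte_Invariant}): you substitute the parameter identification (\ref{eqn:CG1}) into the closed form of Theorem~\ref{thm:Extended_Tutte_Invariant}, note that $w=z=1$ kills two factors, and use $\alpha\beta=1$ from (\ref{eqn:CG2}) to collapse the remaining exponents to $\instar(D)-\aface(D)$. The one detail you rightly flag --- that the c-Tutte recurrences of Definition~\ref{def:T_c} really coincide with the extended Tutte invariant recurrences under (\ref{eqn:CG1}), using $D[1]e=D[\omega]e=D[\omega^{2}]e$ for triloops --- is routine and does not affect the argument.
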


Note that \[ x=\frac{1+\sqrt{3}i}{2}, \ y=\frac{1-\sqrt{3}i}{2} \] are the two primitive sixth roots of unity. These two points satisfy the equation $ (x-1)(y-1)=1 $, so they lie on the hyperbola $ H_{1}:=\{(x,y):(x-1)(y-1)=1\} $, on which $ T(G;x,y) $ and hence $ T_c(\hbox{alt}_c(G);x,y) $ are easy to evaluate~\cite{Welsh1993}.

By using a similar approach, we have the following proposition for the a-Tutte invariant.
\begin{proposition}
	The a-Tutte invariant is well defined for all alternating dimaps of genus zero if and only if 
	\begin{equation*}
		x=\frac{1\pm \sqrt{3}i}{2}, \ z=\frac{1\mp \sqrt{3}i}{2}.
	\end{equation*}	
\end{proposition}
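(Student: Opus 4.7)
The plan is to follow the same strategy as in the proof of the preceding proposition for the c-Tutte invariant, applying Theorem~\ref{thm:Extended_Tutte_Invariant} after specialising the extended Tutte invariant to the a-Tutte invariant. First I would read off the substitution by comparing Definition~\ref{def:T_a} with Definition~\ref{def:Extended_Tutte_Invariant}, using the triloop coincidence $D[1]e = D[\omega]e = D[\omega^{2}]e$ to reconcile each single-term clause of Definition~\ref{def:T_a} with the corresponding clause of Definition~\ref{def:Extended_Tutte_Invariant}. Writing $\alpha$ and $\beta$ for the two indeterminates of $T_{a}$, I expect the correspondence to be
\begin{equation*}
    w = y = 1,\ \, x = h = \alpha,\ \, z = a = \beta,\ \, f = 1,\ \, j = k = 1,
\end{equation*}
with $b = c = d = e = g = i = l = 0$.

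Next I would substitute these values into the necessary conditions (\ref{eqn:necessary_conditions_for_ETI_1})--(\ref{eqn:necessary_conditions_for_ETI_4}) supplied by Theorem~\ref{thm:Extended_Tutte_Invariant}. I expect conditions (\ref{eqn:necessary_conditions_for_ETI_2}) and (\ref{eqn:necessary_conditions_for_ETI_4}) to collapse to the trivial identities $\beta = \beta$ and $\alpha = \alpha$ respectively, while (\ref{eqn:necessary_conditions_for_ETI_1}) becomes $\alpha\beta = \alpha + \beta$ and (\ref{eqn:necessary_conditions_for_ETI_3}) becomes $\alpha\beta = 1$.

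Finally, solving the resulting system $\alpha + \beta = \alpha\beta = 1$ shows that $\alpha$ satisfies $t^{2} - t + 1 = 0$, giving the two primitive sixth roots of unity $\alpha = (1 \pm \sqrt{3}\,i)/2$ with $\beta$ forced to be the conjugate root. Translating back to the variables appearing in the statement of the a-Tutte invariant then yields the proposition.

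The principal obstacle I foresee is the bookkeeping in the first step: each composite clause of Definition~\ref{def:T_a} governs simultaneously a triloop type (handled by one of (ETI1)--(ETI4)) and a semiloop type (handled by one of (ETI5)--(ETI7)), so consistency of the substitution must be verified on both fronts, and in the semiloop cases one must confirm that precisely one of the three ETI coefficients is non-zero. Once the parameter assignment is correctly set up, the remaining algebra is routine and parallels the c-Tutte computation almost verbatim, with the roles of $y$ and $z$ interchanged by the triality action on edge types recorded in Table~\ref{tab:trial_edge_type}.
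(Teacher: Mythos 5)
Your proposal is correct and follows essentially the same route as the paper, which proves the c-Tutte case by matching Definition~\ref{def:T_a}-style clauses against Definition~\ref{def:Extended_Tutte_Invariant}, substituting into (\ref{eqn:necessary_conditions_for_ETI_1})--(\ref{eqn:necessary_conditions_for_ETI_4}), and solving $\alpha\beta=\alpha+\beta=1$; the paper's own proof of the a-Tutte version is just ``use similar arguments.'' Your parameter assignment ($w=y=1$, $x=h=\alpha$, $z=a=\beta$, $f=j=k=1$, the rest zero) and the resulting reduction to $\alpha\beta=\alpha+\beta=1$ are exactly right.
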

 
\begin{proof}
	Use similar arguments as in the proof of Proposition~\ref{pro:T_c_general}.
\end{proof}

\begin{corollary}\label{cor:T_a_general}
	The only a-Tutte invariants that are well defined for all alternating dimaps $ D $ of genus zero are
	\begin{equation*}\label{eqn:Extended_Tutte_Invariant_Ga2}
		T_{a}(D;\frac{1\pm \sqrt{3}i}{2},\frac{1\mp \sqrt{3}i}{2}) = 
		\left( \frac{1\pm \sqrt{3}i}{2} \right)^{\instar(D)-\cface(D)}.
	\end{equation*}
	\qed
\end{corollary}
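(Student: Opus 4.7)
The plan is to specialise Theorem~\ref{thm:Extended_Tutte_Invariant} to the parameter sequence arising from the a-Tutte invariant and then simplify using the algebraic constraints that the preceding proposition imposes. First, I would translate Definition~\ref{def:T_a} into the extended-Tutte-invariant template of Definition~\ref{def:Extended_Tutte_Invariant}. Writing $\alpha$ and $\beta$ for the two arguments of $T_a(D;\alpha,\beta)$, and matching the five rules of $T_a$ against (ETI1)--(ETI8), one reads off
\[ w = 1,\ x = \alpha,\ y = 1,\ z = \beta,\ a = \beta,\ f = 1,\ h = \alpha,\ j = k = 1, \]
with every remaining coefficient among $b, c, d, e, g, i, l$ equal to zero. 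Each proper-triloop rule collapses correctly because $D[1]e = D[\omega]e = D[\omega^{2}]e$ for triloops, and each single-term semiloop rule is matched by putting two of the three coefficients in (ETI5)--(ETI7) to zero.

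Next, since the preceding proposition ensures $\alpha,\beta \ne 0$, Theorem~\ref{thm:Extended_Tutte_Invariant} applies. Substituting the parameter sequence above into the closed form
\[ F(D) = w^{k(D)} \cdot x^{\instar(D)-k(D)} \cdot y^{\aface(D)-k(D)} \cdot z^{\cface(D)-k(D)} \]
collapses the $w$- and $y$-factors to $1$ and leaves
\[ T_a(D;\alpha,\beta) = \alpha^{\instar(D)-k(D)} \cdot \beta^{\cface(D)-k(D)}. \]
Condition (\ref{eqn:necessary_conditions_for_ETI_3}), evaluated on this parameter sequence, becomes $\alpha\beta = 1$, so $\beta = \alpha^{-1}$. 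Substituting and cancelling the two appearances of $k(D)$ in the exponents yields
\[ T_a(D;\alpha,\beta) = \alpha^{\instar(D)-\cface(D)}, \]
and inserting the two admissible values $\alpha = \tfrac{1\pm\sqrt{3}i}{2}$ (the primitive sixth roots of unity, which are the roots of $t^{2} - t + 1$) produces the claimed formula.

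I do not expect a genuine obstacle: the entire argument is the parameter-matching between Definitions~\ref{def:T_a} and~\ref{def:Extended_Tutte_Invariant} followed by the one-line algebraic identity $\alpha\beta = 1$. The only care needed is to verify that each single-term recurrence in $T_a$ is faithfully recovered as a special case of the corresponding three-term recurrence in the extended Tutte invariant; this is the exact analogue of the calculation already carried out for $T_c$ in Proposition~\ref{pro:T_c_general} and Corollary~\ref{cor:T_c_general}, so no new difficulty arises.
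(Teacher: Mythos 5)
Your proposal is correct and follows essentially the same route as the paper: the paper leaves this corollary as immediate from the preceding proposition, whose intended proof (mirroring Proposition~\ref{pro:T_c_general} and Corollary~\ref{cor:T_c_general}) is exactly your parameter-matching of Definition~\ref{def:T_a} against Definition~\ref{def:Extended_Tutte_Invariant}, followed by substitution into the closed form of Theorem~\ref{thm:Extended_Tutte_Invariant} and the reduction $\alpha\beta=\alpha+\beta=1$. Your identified parameter values and the cancellation $\beta=\alpha^{-1}$ giving the exponent $\instar(D)-\cface(D)$ all check out.
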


\subsection{c-Tutte Invariants for Restricted Alternating Dimaps}

The c-Tutte invariant and the a-Tutte invariant are closely related. Once a problem is solved for one of these invariants, it can then be solved for the other by some appropriate modifications, as evidenced in \cref{subsec:c_a_Tutte_all}. Hence, we only focus on the c-Tutte invariant from now onwards.

A \emph{c-cycle block} (respectively, an \emph{a-cycle block}) of an alternating dimap $ D $ is a block that is a clockwise face (respectively, an anticlockwise face) of $ D $ that has the same number of vertices as edges. Such a block is a directed cycle of $ D $.

A \emph{c-simple alternating dimap} (see Figure~\ref{fig:c-simple alternating dimap})
\begin{figure}[t]
	\centering
	\includegraphics[width=0.75\textwidth]{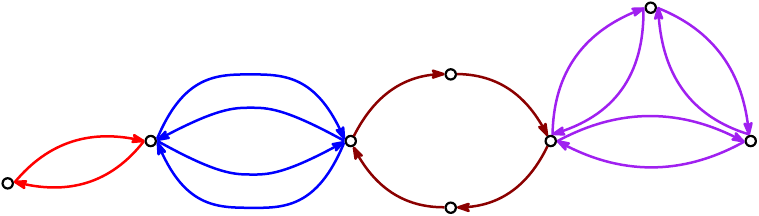}
	\caption{A c-simple alternating dimap with four blocks}
	\label{fig:c-simple alternating dimap}
\end{figure}
is a loopless alternating dimap of genus zero in which every block is either:
\begin{itemize}
	\item[i)] a c-cycle block, or
	\item[ii)] an element of $ \hbox{alt}_c(\mathcal{G}) $,
\end{itemize}
and there exists no block within a clockwise face of any other block.

A \emph{c-alternating dimap} is an alternating dimap of genus zero that can be obtained from a c-simple alternating dimap by adding some c-multiloops within some anticlockwise faces of the c-simple alternating dimap. Hence, a c-simple alternating dimap is merely a c-alternating dimap without any loops.

Let $ A $ denote the set of cutvertices and $ B $ denote the set of blocks of a c-alternating dimap $ H $. We construct the \emph{c-block graph} of $ H $ with vertex set $ A \cup B $ as follows: $ a_{i} \in A $ and $ b_{j} \in B $ are adjacent if block $ b_{j} $ of $ H $ contains the cutvertex $ a_{i} $ of $ H $. The construction of the c-block graph of a c-alternating dimap is the same as the construction of the \emph{block graph} of a graph. Hence, the c-block graph of a connected c-alternating dimap is a tree. An example of a c-alternating dimap and the corresponding c-block graph is shown in Figure~\ref{fig:c-alternating_dimap_and_c-block}.
\begin{figure}[t]
	\centering
	\includegraphics[width=0.75\textwidth]{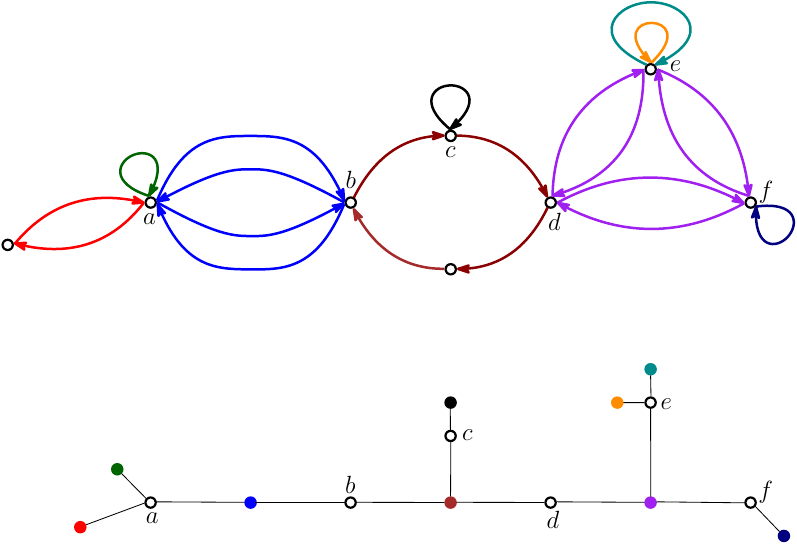}
	\caption{A c-alternating dimap and its c-block graph}
	\label{fig:c-alternating_dimap_and_c-block}
\end{figure}

\begin{lemma}\label{lem:alt_c_size_two}
	Let $ D $ be an alternating dimap. Every clockwise face of $ D $ has size exactly two if and only if there exists an undirected orientably embedded graph $ G $ such that $ D \cong \hbox{alt}_c(G) $.
\end{lemma}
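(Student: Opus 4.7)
The proof will split into the two implications, with $(\Leftarrow)$ immediate from the definition of $\hbox{alt}_c(G)$ (each edge of $G$ contributes precisely one clockwise face of size two to $\hbox{alt}_c(G)$, and every clockwise face of $\hbox{alt}_c(G)$ arises this way), and $(\Rightarrow)$ requiring an explicit reconstruction of $G$ from $D$.

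The plan for $(\Rightarrow)$ is as follows. Assume every clockwise face of $D$ has size exactly two. First, set up a pairing on $E(D)$: each edge $e$ lies on a unique clockwise face, which by hypothesis has exactly two edges, so $e$ has a unique partner in that face. A short check shows that if $e = uv$, then its partner must be of the form $vu$, giving a fixed-point-free involution $\sigma$ on $E(D)$ that swaps opposite directed edges between the same pair of endpoints.

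Next, build the candidate embedded graph $G$ by taking $V(G) := V(D)$ and adding one undirected edge $\{u,v\}$ for each orbit $\{uv, vu\}$ of $\sigma$. To equip $G$ with an embedding on the same surface as $D$, pass to rotation systems: at each vertex $v$ the cyclic order of directed edges in $D$ alternates in/out, and because the right successor of any in-edge $uv$ at $v$ is by definition the next out-edge clockwise, the size-two hypothesis forces this right successor to equal $\sigma(uv) = vu$. Thus the rotation at $v$ in $D$ partitions canonically into consecutive (in, out) pairs that are exactly the $\sigma$-orbits incident with $v$, and the rotation at $v$ in $G$ is then the list of the corresponding undirected edges in the same cyclic order.

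Finally, verify $\hbox{alt}_c(G) \cong D$ by re-expanding each undirected edge $\{u,v\}$ in the rotation at $v$ into the ordered pair $(uv, vu)$; this reproduces the rotation at $v$ in $D$, and since an alternating dimap on an orientable surface is determined by its vertex set, edge set, and rotation system, the resulting isomorphism follows. The step requiring the most care --- and the main obstacle --- is checking that the in/out pairing inferred at $v$ agrees with the pairing inferred at the opposite endpoint $u$ of the same $\sigma$-orbit. This consistency is exactly the content of the size-two hypothesis on clockwise faces, which is a symmetric condition on the endpoints of the pair $\{uv, vu\}$, so $\sigma$ and hence the rotation system of $G$ are globally well defined.
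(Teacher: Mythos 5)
Your proof is correct and takes essentially the same approach as the paper: the backward direction is immediate from the definition of $\hbox{alt}_c(G)$, and the forward direction reconstructs $G$ by collapsing each clockwise face of size two to a single undirected edge between its (possibly coincident) endpoints. The paper simply places each new edge ``within'' the corresponding clockwise face rather than arguing via rotation systems, but this is the same construction at a different level of formality.
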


\begin{proof}
	We first prove the forward implication. Given an alternating dimap $ D $ where all of its clockwise faces have size exactly two, we construct an undirected graph $ G $ as follows. Let $ V(G)=V(D) $. For each clockwise face of $ D $, a new edge of $ G $ that is incident with the same endvertices (i.e., the two vertices incident with this face, which may coincide) is added in $ G $ such that the new edge is within the clockwise face of $ D $. Each clockwise face contains exactly one edge in this way, therefore edges in $ G $ do not intersect. Hence, we obtain an undirected embedded graph $ G $ such that $ D \cong \hbox{alt}_c(G) $.
	
	Conversely, if there exists an undirected graph $ G $ such that an alternating dimap $ D \cong \hbox{alt}_c(G) $, every clockwise face of $ D $ has size exactly two, by the definition of $ \hbox{alt}_c(G) $.
\end{proof}

Two graphs $ G $ and $ H $ are \emph{codichromatic} (or \emph{Tutte equivalent}) if $ T(G;x,y)=T(H;x,y) $. Tutte~\cite{Tutte1954,Tutte1967,Tutte1974} proved that the Tutte polynomial is multiplicative over blocks.
\begin{theorem*}[\textbf{Tutte 1954}]
	Let $ G $ be the union of two subgraphs $ H $ and $ K $ having no common edge and at most one common vertex. Then,
	\begin{equation*}
		T(G;x,y)=T(H;x,y) \cdot T(K;x,y).
	\end{equation*}
\end{theorem*}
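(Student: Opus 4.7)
The plan is to use the corank--nullity (subset expansion) form of the Tutte polynomial, namely
\[ T(G;x,y) = \sum_{A \subseteq E(G)} (x-1)^{r(E(G)) - r(A)} (y-1)^{|A| - r(A)}, \]
where $ r(A) $ denotes $ |V(G)| $ minus the number of connected components of the spanning subgraph with edge set $ A $. The key observation is that, because $ H $ and $ K $ share no edges and at most one common vertex, the rank function is additive over the decomposition: for every $ A \subseteq E(G) $, writing $ A_{1} = A \cap E(H) $ and $ A_{2} = A \cap E(K) $, one has $ r_{G}(A) = r_{H}(A_{1}) + r_{K}(A_{2}) $, and in particular $ r_{G}(E(G)) = r_{H}(E(H)) + r_{K}(E(K)) $.

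Granting rank additivity, the subset expansion factorises directly as a double sum:
\begin{align*}
T(G;x,y) &= \sum_{A_{1} \subseteq E(H)} \sum_{A_{2} \subseteq E(K)} (x-1)^{r_{H}(E(H)) - r_{H}(A_{1})} (y-1)^{|A_{1}| - r_{H}(A_{1})} \\
&\qquad \cdot (x-1)^{r_{K}(E(K)) - r_{K}(A_{2})} (y-1)^{|A_{2}| - r_{K}(A_{2})} \\
&= T(H;x,y) \cdot T(K;x,y).
\end{align*}

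The substantive step is verifying rank additivity. If $ H $ and $ K $ share no vertex, it is immediate, since every connected component of the spanning subgraph $ (V(G), A) $ is contained entirely in $ V(H) $ or in $ V(K) $. If they share a single vertex $ v $, then identifying at $ v $ merges the unique component of $ (V(H), A_{1}) $ containing $ v $ with the unique component of $ (V(K), A_{2}) $ containing $ v $, which decreases the total component count by exactly one; this is compensated by the identity $ |V(G)| = |V(H)| + |V(K)| - 1 $, giving the desired additivity.

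An alternative route is direct induction on $ |E(G)| $ via the deletion--contraction recurrence recalled at the start of \cref{sec:Tutte_invariant_vs_Tutte_polynomial}. Pick $ e \in E(G) $; without loss of generality $ e \in E(H) $. The essential check is that $ e $ is a loop (respectively coloop, ordinary edge) in $ H $ if and only if it is so in $ G $, which again follows because $ H $ and $ K $ meet in at most one vertex. One then observes that $ G \setminus e = (H \setminus e) \cup K $ and $ G / e = (H/e) \cup K $ each still satisfy the block-sum hypothesis, so the inductive hypothesis applies to both; matching the appropriate case of the recurrence then yields $ T(G) = T(H) \cdot T(K) $. In either approach I expect the only genuine obstacle to be the case analysis at the shared cut vertex; the remainder of the argument is bookkeeping.
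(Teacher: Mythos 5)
Your proof is correct, but note that the paper does not actually prove this statement: it is quoted as a classical theorem of Tutte, with the multiplicativity over blocks attributed to \cite{Tutte1954,Tutte1967,Tutte1974} and no argument supplied. So there is no ``paper proof'' to compare against; what you have written is a self-contained justification of a cited background result. Your main route --- the corank--nullity expansion together with rank additivity $r_G(A)=r_H(A\cap E(H))+r_K(A\cap E(K))$ --- is the standard modern argument, and your verification of additivity is right in both cases: with no shared vertex the component counts simply add, and with one shared vertex $v$ the merging of the two components containing $v$ (decreasing the component count by one) is exactly offset by $\abs{V(G)}=\abs{V(H)}+\abs{V(K)}-1$. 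The factorisation of the double sum then follows from the bijection $A\leftrightarrow(A\cap E(H),A\cap E(K))$, which you use implicitly and which holds because $E(H)$ and $E(K)$ are disjoint. Your alternative deletion--contraction induction also goes through; the one point worth making explicit there is that a bridge of $H$ remains a bridge of $G$ (removing it separates $H$ into two pieces, at most one of which can contain the cut vertex and hence reach $K$), and that both $G\setminus e$ and $G/e$ still satisfy the one-point-union hypothesis. Either argument is complete; the first is cleaner since it avoids the case analysis at the cut vertex entirely.
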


\begin{proposition}\label{pro:c-union_c-alt_dimap}
	The c-union of two c-alternating dimaps is also a c-alternating dimap. \qed
\end{proposition}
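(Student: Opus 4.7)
The plan is to verify the defining properties of a c-alternating dimap for $D = S_1 \cup_c S_2$ in three steps, working from block structure outwards to face containment. First I would observe that, since $S_1$ and $S_2$ share at most one vertex (and no edges) in the combined embedding, the block decomposition of $D$ is the disjoint union of the block decompositions of $S_1$ and $S_2$: no existing blocks merge and no new blocks arise, because a single shared vertex can only serve as a cutvertex of $D$ and cannot be an internal vertex of a $2$-connected substructure spanning both pieces.

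Given this, every block of $D$ is already a block of some $S_i$, and hence, by hypothesis, is either a c-cycle block, an element of $\hbox{alt}_c(\mathcal{G})$, or a c-multiloop. I would then argue that each c-multiloop in $D$ was a c-multiloop of some $S_i$ sitting inside an anticlockwise face $g$ of $S_i$, and that this anticlockwise face persists in $D$: the only topological effect of the vertex identification is that the face of $S_1$ and the face of $S_2$ meeting at the identified vertex merge into one face of $D$, and the alternating-dimap face-type at that common vertex is preserved by the identification. So the local requirements for $D$ to be a c-alternating dimap are inherited block-by-block from $S_1$ and $S_2$.

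Finally I would check the remaining condition that no block of $D$ lies within a clockwise face of another block of $D$. For two blocks both contained in the same $S_i$ this holds because $S_i$ is itself a c-alternating dimap. For a block of $S_1$ and a block of $S_2$, the hypothesis built into the definition of c-union — that $S_1$ is not within a clockwise face of $S_2$ and vice versa — directly forbids any block of one from being within a clockwise face of any block of the other. The main obstacle is the topological bookkeeping in the middle step, namely confirming that the vertex identification matches up the two local face cycles at the shared vertex in a way that preserves the clockwise/anticlockwise designation of every face where a c-multiloop resides; once the faces of $D$ are matched with those of $S_1$ and $S_2$, the three defining properties of a c-alternating dimap all follow immediately.
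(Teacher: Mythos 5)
The paper states this proposition without proof (it is marked \qed as immediate from the definitions), and your step-by-step verification of the three defining properties --- blocks are preserved across the single identified cutvertex, c-multiloops stay within anticlockwise faces after the face merge, and the c-union condition rules out cross-containment in clockwise faces --- is exactly the routine argument the paper is implicitly relying on. Your proposal is correct and takes the same (indeed, the only natural) approach.
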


\begin{corollary}\label{cor:c-union_c-simple_alt_dimap}
	The c-union of two c-simple alternating dimaps is also a c-simple alternating dimap. \qed
\end{corollary}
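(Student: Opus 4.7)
The plan is to derive this corollary directly from the preceding Proposition~\ref{pro:c-union_c-alt_dimap} together with the characterisation, noted in the text immediately after the definition, that a c-simple alternating dimap is precisely a c-alternating dimap with no loops. So the whole argument reduces to showing that looplessness is preserved by the c-union operation.

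First I would apply Proposition~\ref{pro:c-union_c-alt_dimap} to the two c-simple alternating dimaps $S_{1}$ and $S_{2}$: since every c-simple alternating dimap is in particular a c-alternating dimap, the c-union $D = S_{1} \cup_{c} S_{2}$ is a c-alternating dimap. Next I would observe that, by definition, each $S_{i}$ is loopless, and that the c-union operation merely identifies at most one pair of vertices (one from $S_{1}$ and one from $S_{2}$) without introducing any new edges; in particular $|E(D)| = |E(S_{1})| + |E(S_{2})|$ and every edge of $D$ retains its endvertices (or has one endvertex relabelled by the identification). Since a loop is an edge whose two endvertices coincide, and vertex-identification between two disjoint vertex sets cannot cause the two endvertices of any pre-existing edge to coincide (the edge lies entirely in one $S_{i}$, and that $S_{i}$ had no loops), $D$ is loopless.

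Finally I would invoke the sentence ``a c-simple alternating dimap is merely a c-alternating dimap without any loops'' from the definition just before Lemma~\ref{lem:alt_c_size_two}: a loopless c-alternating dimap is a c-simple alternating dimap, so $D$ is a c-simple alternating dimap, as required.

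There is no real obstacle here; the only subtlety worth flagging explicitly is the verification that vertex identification in a c-union cannot turn a non-loop into a loop, which follows at once from the fact that the identified vertices come from disjoint vertex sets $V(S_{1})$ and $V(S_{2})$. Thus the proof is essentially a one-line deduction from Proposition~\ref{pro:c-union_c-alt_dimap}, and I would present it in that form.
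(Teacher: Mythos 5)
Your proposal is correct and matches the paper's intent: the paper states this corollary without proof (marking it \qed as immediate from Proposition~\ref{pro:c-union_c-alt_dimap}), and your derivation --- apply the proposition, note that identifying at most one pair of vertices from the disjoint sets $V(S_{1})$ and $V(S_{2})$ cannot create a loop, and invoke the characterisation of a c-simple alternating dimap as a loopless c-alternating dimap --- is precisely the intended argument, just written out in full.
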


\begin{proposition}
	Let $ D_{1} $ and $ D_{2} $ be alternating dimaps and $ D = D_{1} \cup_{c} D_{2} $. Then the set of blocks in $ D $ is the union of the sets of blocks of $ D_{1} $ and $ D_{2} $. \qed
\end{proposition}

In the following lemmas, we show that the c-Tutte invariant is well defined for any alternating dimap (of genus zero) that is either a c-cycle block or a c-multiloop.
\begin{lemma}\label{lem:c-cycle_blocks}
	Let $ D $ be an alternating dimap that is a c-cycle block of size $ m \ge 1 $. Then, \[ T_{c}(D;x,y)=x^{m-1}. \]
\end{lemma}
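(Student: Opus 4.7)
The plan is to proceed by induction on $m$. For the base case $m = 1$, the c-cycle block consists of a single vertex incident with one loop that bounds both the clockwise face and the complementary anticlockwise face, each of size one; by the definitions in \cref{sec:type_of_loops}, this loop is simultaneously a $1$-loop, an $\omega$-loop, and an $\omega^2$-loop, hence an ultraloop. One application of (TC1) then gives $T_c(D;x,y) = T_c(\emptyset;x,y) = 1 = x^0$, as required.

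For the inductive step with $m \geq 2$, I would choose any edge $e = uv \in E(D)$. Because $D$ is a directed cycle on $m$ vertices and $m$ edges, every vertex has degree exactly two; in particular $\deg(v) = 2$, so $e$ is a $1$-loop, and since its component contains $m \geq 2$ edges, $e$ is a \emph{proper} $1$-loop (not an ultraloop). Rule (TC2) of \cref{def:T_c} then gives
\[
T_c(D;x,y) = x \cdot T_c(D[\omega^2]e;x,y).
\]
The key structural claim is that $D[\omega^2]e$ is itself a c-cycle block of size $m-1$: the right successor $r = vn$ of $e$ is the next edge of the cycle, and since $\deg(v) = 2$ the $\omega^2$-reduction deletes both $e$ and $r$, removes $v$, and inserts a shortcut edge $r' = un$. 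The resulting alternating dimap is a directed cycle on $m-1$ vertices bounding a clockwise face of size $m-1$ and its complementary anticlockwise face, collapsing to an ultraloop in the limiting case $m = 2$ (which by (TC1) has $T_c = 1$). Invoking the inductive hypothesis then gives $T_c(D[\omega^2]e;x,y) = x^{m-2}$, so $T_c(D;x,y) = x^{m-1}$, completing the induction.

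I expect the most delicate step to be verifying that $D[\omega^2]e$ really is a c-cycle block of size $m-1$. This reduces to checking that the cyclic orderings around $u$ and $n$ inherited from $D$ place the new edge $r'$ on the boundary of the shrunken clockwise face, and that no new incidences spoil the single-block structure. This follows directly from the definition of $\omega^2$-reduction together with the observation that the clockwise face simply loses the two consecutive edges $e, r$ and gains $r'$, but it deserves careful bookkeeping, especially across the threshold $m = 2$ where the reduced dimap degenerates to an ultraloop.
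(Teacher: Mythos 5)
Your proof is correct and follows the same route as the paper, which proves this lemma by induction on $m$ (the paper omits the details). Your identification of the $m=1$ case as an ultraloop handled by (TC1), and of each edge for $m\ge 2$ as a proper $1$-loop so that (TC2) applies and the $\omega^{2}$-reduction yields a c-cycle block of size $m-1$, supplies exactly the bookkeeping the paper leaves to the reader.
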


\begin{proof}
	Induction on $ m $.
\end{proof}

\begin{corollary}\label{cor:c-cycle_blocks}
	Let $ D $ be an alternating dimap that is a c-cycle block of size $ m \ge 1 $ and $ S_{t} $ be a tree of size $ t \ge 0 $. Then, \[ T_{c}(D;x,y)=T(S_{m-1};x,y). \] \qed
\end{corollary}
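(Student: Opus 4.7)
The plan is to combine Lemma~\ref{lem:c-cycle_blocks} with a standard evaluation of the Tutte polynomial of a tree. By Lemma~\ref{lem:c-cycle_blocks}, the left-hand side satisfies $T_c(D;x,y)=x^{m-1}$, so the entire task reduces to showing that the right-hand side $T(S_{m-1};x,y)$ also equals $x^{m-1}$.

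To handle the right-hand side, I would argue that every edge of a tree is a coloop (bridge). Using the deletion--contraction recurrence of $T(G;x,y)$ recalled at the start of Section~\ref{sec:Tutte_invariant_vs_Tutte_polynomial}, the coloop clause gives $T(S_t;x,y)=x\cdot T(S_t/e;x,y)$ whenever $e$ is an edge of $S_t$, and contracting an edge of a tree produces a tree with one fewer edge. A routine induction on $t\ge 0$, with base case $T(S_0;x,y)=1$ (the empty tree has no edges and the recurrence returns $1$), then yields $T(S_t;x,y)=x^{t}$. Specialising to $t=m-1$ gives $T(S_{m-1};x,y)=x^{m-1}$.

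Combining the two equalities yields $T_c(D;x,y)=x^{m-1}=T(S_{m-1};x,y)$, as required. There is no real obstacle here: the argument is a direct appeal to Lemma~\ref{lem:c-cycle_blocks} together with the classical fact that the Tutte polynomial of a tree on $t$ edges is $x^{t}$, so the corollary follows in a couple of lines.
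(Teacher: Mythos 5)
Your proof is correct and matches the paper's intent: the corollary is stated with no written proof precisely because it follows immediately from Lemma~\ref{lem:c-cycle_blocks} together with the classical fact that a tree on $t$ edges, having only coloops, has Tutte polynomial $x^{t}$. Your two-line combination of these facts is exactly the argument the paper is relying on.
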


\begin{lemma}\label{lem:c-multiloops}
	Let $ D $ be an alternating dimap of genus zero that is a c-multiloop of size $ m $ and put $ k=\cface(D) $. Then, \[ T_{c}(D;x,y)=y^{m-k}. \]
\end{lemma}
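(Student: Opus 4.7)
The plan is to proceed by induction on $m$. For the base case $m = 0$, $D$ is empty with $k = 0$, and $T_c(D;x,y) = 1 = y^0$ by multiplicativity of the c-Tutte invariant. The case $m = 1$ is handled directly, since a standalone c-multiloop of size one must be an ultraloop (the only possible single-edge component of an alternating dimap); here $k = 1$, $m - k = 0$, and (TC1) gives $T_c(D;x,y) = T_c(\emptyset;x,y) = 1$.

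For the inductive step with $m \geq 2$, the key structural observation is that the innermost loop $e_1$ of the c-multiloop (the one within the faces of every other loop) bounds, on its interior side, a face of size exactly one formed by $e_1$ alone. Consequently $e_1$ is either a proper $\omega^{2}$-loop (if this interior face is a c-face) or a proper $\omega$-loop (if it is an a-face). If $e_1$ is a proper $\omega^{2}$-loop, then (TC4) together with the fact that $\omega$-reduction of such a loop deletes it yields $T_c(D;x,y) = T_c(D \setminus e_1;x,y)$, and I aim to verify that $D \setminus e_1$ is a c-multiloop of size $m - 1$ with exactly $k - 1$ c-faces, so that the inductive hypothesis gives $y^{(m-1)-(k-1)} = y^{m-k}$. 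If $e_1$ is a proper $\omega$-loop, then (TC3) together with $D[1]e_1 = D \setminus e_1$ yields $T_c(D;x,y) = y \cdot T_c(D \setminus e_1;x,y)$, with the analogous claim that $D \setminus e_1$ is a c-multiloop of size $m - 1$ with $k$ c-faces, producing $y \cdot y^{m-1-k} = y^{m-k}$.

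The main obstacle is verifying that the remaining loops $e_2, \ldots, e_m$ form a c-multiloop of the claimed size and c-face count after the deletion. This reduces to analysing face merging: the innermost loop $e_1$ bounds its interior face (of size $1$) and one further face $F_2$, and their merger on deletion of $e_1$ is a single face of size $\abs{F_2} - 1$ inheriting the orientation (c-face or a-face) of $F_2$. Consequently, exactly one face of the interior type of $e_1$ is lost while the types of all other faces are preserved, giving the required change in $k$. It then remains to check that the nesting condition and the ``outermost loop forms a c-face with some earlier loop'' property defining a c-multiloop are preserved; this is immediate since $e_2, \ldots, e_m$ retain their cyclic ordering at the common vertex and any c-face involving $e_m$ with some $e_j$, $j \geq 2$, is unaffected by the removal of $e_1$.
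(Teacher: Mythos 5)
Your proof is correct and matches the paper's approach: the paper's own proof is just the phrase ``Induction on $m$'', and your argument is a careful expansion of exactly that induction, peeling off the innermost loop $e_1$ (a proper $\omega$-loop or proper $\omega^{2}$-loop since it bounds a size-one face), applying (TC3) or (TC4), and tracking how the face counts and the c-multiloop structure change. The only cosmetic slip is the closing claim that the clockwise face witnessing the c-multiloop property is ``unaffected'' by removing $e_1$; when that face happens to contain $e_1$ it does shrink by one edge, but, as your own merging analysis already shows, it remains a clockwise face containing $e_m$ and some $e_j$ with $j\ge 2$, so the conclusion stands.
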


\begin{proof}
	Induction on $ m $.
\end{proof}

\begin{corollary}\label{cor:c-multiloops}
	Let $ D $ be an alternating dimap of genus zero that is a c-multiloop of size $ m $ and put $ k=\cface(D) $, and $ L_{t} $ be a graph with $ t $ loops. Then, \[ T_{c}(D;x,y)=T(L_{m-k};x,y). \] \qed
\end{corollary}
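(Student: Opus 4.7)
The plan is to combine Lemma~\ref{lem:c-multiloops} with a direct computation of the Tutte polynomial of a graph consisting only of loops. By Lemma~\ref{lem:c-multiloops}, we already know $T_c(D;x,y) = y^{m-k}$, so the task reduces to showing $T(L_{m-k};x,y) = y^{m-k}$.

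To establish the latter, I would apply the deletion-contraction recurrence for the Tutte polynomial stated at the beginning of \cref{sec:Tutte_invariant_vs_Tutte_polynomial}. Since every edge of $L_t$ is a loop, the recurrence gives $T(L_t;x,y) = y \cdot T(L_t \setminus e;x,y) = y \cdot T(L_{t-1};x,y)$ for any edge $e$. A simple induction on $t$, with base case $T(L_0;x,y) = 1$ (the graph with no edges), yields $T(L_t;x,y) = y^t$.

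Setting $t = m-k$ and comparing with Lemma~\ref{lem:c-multiloops} gives
\begin{equation*}
T_c(D;x,y) = y^{m-k} = T(L_{m-k};x,y),
\end{equation*}
as required. There is no real obstacle here; the corollary is just a reformulation of Lemma~\ref{lem:c-multiloops} in terms of the Tutte polynomial of a loop-only graph, exactly parallel to how Corollary~\ref{cor:c-cycle_blocks} reformulates Lemma~\ref{lem:c-cycle_blocks} using the tree $S_{m-1}$ (whose Tutte polynomial is $x^{m-1}$).
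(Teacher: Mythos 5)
Your proposal is correct and matches the paper's reasoning: the corollary is stated with no written proof (just \qed), being treated as an immediate consequence of Lemma~\ref{lem:c-multiloops} together with the standard fact that a loop-only graph $L_t$ has Tutte polynomial $y^t$, which is exactly what you verify by deletion--contraction and induction. Your parallel with Corollary~\ref{cor:c-cycle_blocks} is also the intended one.
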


The following two lemmas show the general form of the c-Tutte invariant, when a c-cycle block or a c-multiloop is first reduced in certain alternating dimaps.
\begin{lemma}\label{lem:T_c-c-cycle_blocks}
	Let $ C_{m} $ be a c-cycle block of size $ m \ge 1 $ in a c-alternating dimap $ D $. Then, \[ T_{c}(D;x,y)=x^{m-1} \cdot T_{c}(D\setminus C_{m};x,y). \]
\end{lemma}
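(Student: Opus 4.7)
I plan to prove the lemma by induction on $m$.

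For the base case $m=1$, the block $C_{1}$ is a single edge $e$ at a single vertex $v$ that bounds a clockwise face of size one, so $e$ is an $\omega^{2}$-loop. If $\deg_{D}(v) = 2$, then $e$ is an ultraloop and (TC1) applies; otherwise $v$ is a cutvertex and $e$ is a proper $\omega^{2}$-loop, so (TC4) applies. In both cases the reduction amounts to deleting $e$, giving $T_{c}(D;x,y) = T_{c}(D \setminus C_{1};x,y) = x^{0} \cdot T_{c}(D \setminus C_{1};x,y)$.

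For the inductive step $m \ge 2$, since at most one vertex of $C_{m}$ is a cutvertex of $D$, I can pick an edge $e$ of $C_{m}$ whose head $v$ is not a cutvertex. Then $\deg_{D}(v) = 2$, so $e$ is a 1-loop; and since $m \ge 2$ the edge $e$ joins distinct endpoints, so it is neither an $\omega$-loop nor an $\omega^{2}$-loop, making it a proper 1-loop. Using (TC2) together with the identity $D[\omega^{2}]e = D[1]e$ valid for any triloop yields
\[
T_{c}(D;x,y) = x \cdot T_{c}(D[1]e;x,y).
\]

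The 1-reduction contracts $e$. Because $v$'s only incident edges lie in $C_{m}$, the contraction is local: the remaining $m-1$ edges of $C_{m}$ still bound a clockwise face, forming a c-cycle block $C_{m-1}$ in $D[1]e$; no block outside $C_{m}$ is altered, so $D[1]e$ remains a c-alternating dimap; and the complements satisfy $(D[1]e) \setminus C_{m-1} = D \setminus C_{m}$. Applying the inductive hypothesis to $D[1]e$ and $C_{m-1}$ and substituting above gives the claimed formula. The main delicate point is the block bookkeeping, which reduces entirely to the observation $\deg_{D}(v) = 2$; a minor base-case subtlety is distinguishing the ultraloop and proper $\omega^{2}$-loop possibilities for $m=1$, but both resolve identically via deletion.
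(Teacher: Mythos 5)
Your base case is fine (indeed slightly more careful than necessary), and the overall induction-on-$m$ strategy matches the paper's. But the inductive step rests on a false premise: you assert that at most one vertex of $C_{m}$ is a cutvertex of $D$, and use this to pick an edge of $C_{m}$ whose head is not a cutvertex. A block of a c-alternating dimap can contain several cutvertices --- for instance, a c-cycle block of size two sitting as an internal node of the c-block tree, with further blocks attached (in anticlockwise faces) at \emph{both} of its vertices, is a perfectly legitimate c-alternating dimap. In that situation every vertex of $C_{m}$ has degree greater than two in $D$, so no edge of $C_{m}$ is a $1$-loop and your chosen edge does not exist; the induction cannot start.

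The repair is exactly what the paper does. An edge $e$ of $C_{m}$ ($m\ge 2$) whose head $v$ is a cutvertex is not a $1$-loop, but it is a proper $\omega$-semiloop: its right successor $r$ is the next edge of the cycle around the clockwise face, and $\{e,r\}$ is a cutset separating $v$ (together with the blocks hanging off $v$) from the rest of $C_{m}$, precisely because $C_{m}$ is a block. Rule (TC2) covers proper $1$-loops and proper $\omega$-semiloops uniformly, giving $T_{c}(D;x,y)=x\cdot T_{c}(D[\omega^{2}]e;x,y)$ in either case, after which your bookkeeping (that the remaining $m-1$ edges form a c-cycle block with the same complement) goes through. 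So the gap is real but local: replace ``pick an edge whose head is not a cutvertex, hence a proper $1$-loop'' by ``every edge of $C_{m}$ is a proper $1$-loop or a proper $\omega$-semiloop, and (TC2) applies to both.''
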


\begin{proof}
	Suppose $ C_{m} $ is as stated. We proceed by induction on $ m $. For the base case, suppose $ m=1 $. The c-cycle block $ C_{1} $ is a proper $ \omega^{2} $-loop. By reducing the proper $ \omega^{2} $-loop, we have $ T_{c}(D;x,y)= T_{c}(D[\omega]C_{1};x,y)=x^{1-1} \cdot T_{c}(D\setminus C_{1};x,y) $. Hence, the result for $ m=1 $ follows.
	
	For the inductive step, assume that $ m>1 $ and the result holds for every $ k<m $. Now, every edge in $ C_{m} $ is either a proper $ 1 $-loop or a proper $ \omega $-semiloop. Let $ e \in E(C_{m}) $. By reducing $ e $,
	\begin{align*}
		T_{c}(D;x,y) & = x \cdot T_{c}(D[\omega^{2}]e;x,y) \tag{by (\hyperref[def:T_c]{TC2})}\\
					& = x \cdot x^{(m-1)-1} \cdot T_{c}(D[\omega^{2}]e\setminus C_{m-1};x,y) \tag{by the inductive hypothesis}\\
					& = x^{m-1} \cdot T_{c}(D\setminus C_{m};x,y).
	\end{align*}
	
	This completes the proof, by induction.
\end{proof}

\begin{lemma}\label{lem:T_c-c-multiloops}
	Let $ R_{m} $ be a c-multiloop of size $ m $ in an alternating dimap $ D $ of genus zero and put $ k=\cface(R_{m}) $. Then, \[ T_{c}(D;x,y)=y^{m-k} \cdot T_{c}(D\setminus R_{m};x,y). \]
\end{lemma}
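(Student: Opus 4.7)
The plan is to mirror the proof of Lemma~\ref{lem:T_c-c-cycle_blocks} and proceed by induction on $m$. For the base case $m = 1$, the remark following the definition of multiloop states that a c-multiloop of size one is a proper $\omega^{2}$-loop, so $k = \cface(R_{1}) = 1$ and $m - k = 0$. Applying (TC4) yields $T_{c}(D;x,y) = T_{c}(D[\omega]R_{1};x,y) = T_{c}(D\setminus R_{1};x,y)$, which equals $y^{0}\cdot T_{c}(D\setminus R_{1};x,y)$, as required.

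For the inductive step, assume $m > 1$ and let $e_{1},e_{2},\ldots,e_{m}$ be the loops of $R_{m}$ listed in the order prescribed by the definition of multiloop. I will focus on the innermost loop $e_{1}$. Since $m > 1$, the vertex common to all loops of $R_{m}$ has degree at least $2m > 2$, so $e_{1}$ is not a $1$-loop; being a single edge bounding a face of size one on its inner side, it is therefore either a proper $\omega^{2}$-loop or a proper $\omega$-loop (and not an ultraloop, since the degree exceeds $2$). I handle these two subcases separately.

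If $e_{1}$ is a proper $\omega^{2}$-loop, its inner c-face has size one. Reducing $e_{1}$ via (TC4) (by the $\omega$-reduction, which simply deletes $e_{1}$), the inner c-face disappears while every other c-face of $R_{m}$ is preserved, so $R_{m}\setminus e_{1}$ is a c-multiloop of size $m-1$ with $k-1$ clockwise faces. The inductive hypothesis then gives $T_{c}(D;x,y) = T_{c}(D\setminus e_{1};x,y) = y^{(m-1)-(k-1)}T_{c}(D\setminus R_{m};x,y) = y^{m-k}T_{c}(D\setminus R_{m};x,y)$. Otherwise, $e_{1}$ is a proper $\omega$-loop, so its inner a-face has size one and is adjacent across $e_{1}$ to a clockwise face of some size $s \geq 2$, formed by $e_{1},e_{2},\ldots,e_{s}$. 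Reducing $e_{1}$ via (TC3) contributes a factor of $y$, and in $R_{m}\setminus e_{1}$ the inner a-face merges with the adjacent c-face to yield a clockwise face of size $s-1$, while all other clockwise faces remain unchanged; hence $R_{m}\setminus e_{1}$ is a c-multiloop of size $m-1$ with $k$ clockwise faces. The inductive hypothesis yields $T_{c}(D;x,y) = y \cdot T_{c}(D\setminus e_{1};x,y) = y \cdot y^{(m-1)-k}T_{c}(D\setminus R_{m};x,y) = y^{m-k}T_{c}(D\setminus R_{m};x,y)$.

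The main obstacle I expect is the verification that after deleting the innermost loop, the structure remaining within $D$ is still a c-multiloop of the claimed size and clockwise-face count. This requires careful bookkeeping of how face orientations and sizes change under the deletion of an $\omega$- or $\omega^{2}$-loop, using the fact that adjacent faces in an alternating dimap have opposite orientations to pin down the type of the merged face in the $\omega$-loop subcase (and, in the $\omega^{2}$-loop subcase with $k = 1$, to rule out the degenerate possibility that the remaining loops fail to form a c-multiloop).
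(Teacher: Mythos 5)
Your proposal is correct and follows essentially the same route as the paper, whose proof of this lemma is simply ``similar in style to the proof of Lemma~\ref{lem:T_c-c-cycle_blocks}'', i.e.\ induction on $m$ peeling off one loop at a time via (TC3)/(TC4). Your case split on whether the innermost loop is a proper $\omega$-loop or a proper $\omega^{2}$-loop, together with the bookkeeping of how $\cface$ changes, is exactly the intended elaboration.
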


\begin{proof}
	Similar in style to the proof of Lemma~\ref{lem:T_c-c-cycle_blocks}.
\end{proof}

Let $ G_{1} $ and $ G_{2} $ be two graphs. For $ i \in \mathbb{N} \cup \{0\} $, an \emph{i-union} of $ G_{1} $ and $ G_{2} $, denoted by $ G_{1} \cup_{i} G_{2} $, is obtained by identifying exactly $ i $ pairs of vertices $ u_{j},v_{j} $, $ 1 \le j \le i $ such that $ \{u_{1},u_{2},\ldots,u_{i}\} \subseteq V(G_{1}) $ and $ \{v_{1},v_{2},\ldots,v_{i}\} \subseteq V(G_{2}) $.

We now show that the c-Tutte invariant is multiplicative over blocks for any c-simple alternating dimap.
\begin{lemma}\label{lem:multiplicative_c-simple_alternating_dimap}
	Let $ D $ be a c-union of two c-simple alternating dimaps $ S_{1} $ and $ S_{2} $. Then, \[ T_{c}(D;x,y)=T_{c}(S_{1};x,y) \cdot T_{c}(S_{2};x,y).\]
\end{lemma}

\begin{proof}
	Suppose $ D $ is a c-union of two c-simple alternating dimaps $ S_{1} $ and $ S_{2} $. By Corollary~\ref{cor:c-union_c-simple_alt_dimap}, the alternating dimap $ D $ is also a c-simple alternating dimap. Thus, every block of $ D $ is either a c-cycle block or is an element of $ \hbox{alt}_c(\mathcal{G}) $.
	
	We proceed by induction on the number $ p $ of c-cycle blocks of $ D $. For the base case, suppose $ p=0 $, so that there exists no c-cycle block in both $ S_{1} $ and $ S_{2} $. Thus, every block of $ S_{1} $, $ S_{2} $ and hence $ D $ is an element of $ \hbox{alt}_c(\mathcal{G}) $. Let $ S_{1} \cong \hbox{alt}_c(G_{1}) $ and $ S_{2} \cong \hbox{alt}_c(G_{2}) $. Then $ D \cong \hbox{alt}_c(G) $ for some plane graph $ G = G_{1} \cup_{i} G_{2} $, where $  i \in \{0,1\} $. By Theorem~\ref{thm:T_G=T_c_alt_c(G)},
	\begin{equation*}
		T_{c}(D;x,y)=T(G;x,y),
	\end{equation*}
	\begin{equation*}
		T_{c}(S_{i};x,y)=T(G_{i};x,y),\ i \in \{1,2\}.
	\end{equation*}
	Since the Tutte polynomial is multiplicative over blocks for any graph $ G $, we have
	\begin{equation*}
		T(G;x,y)=T(G_{1};x,y) \cdot T(G_{2};x,y).
	\end{equation*}
	Hence,
	\begin{equation*}
		T_{c}(D;x,y)=T_{c}(S_{1};x,y) \cdot T_{c}(S_{2};x,y).
	\end{equation*}
	
	For the inductive step, assume that $ p>0  $ and the result holds for any c-union that contains less than $ p $ c-cycle blocks. Without loss of generality, let $ C_{m} $ be a c-cycle block in $ S_{1} $ that contains $ m \ge 1 $ edges. Since $ D $ is a c-union of $ S_{1} $ and $ S_{2} $, it contains $ C_{m} $ as one of its blocks. By first reducing every edge of $ C_{m} $, we have
	\begin{align*}
		T_{c}(D;x,y) & = x^{m-1} \cdot T_{c}(D\setminus C_{m};x,y) \tag{by Lemma~\ref{lem:T_c-c-cycle_blocks} applied to $ D $}\\
					& = x^{m-1} \cdot T_{c}(S_{1}\setminus C_{m};x,y) \cdot T_{c}(S_{2};x,y) \tag{by the inductive hypothesis}\\
					& = x^{m-1} \cdot \frac{T_{c}(S_{1};x,y)}{x^{m-1}} \cdot T_{c}(S_{2};x,y) \tag{by Lemma~\ref{lem:T_c-c-cycle_blocks} applied to $ S_{1} $}\\
					& = T_{c}(S_{1};x,y) \cdot T_{c}(S_{2};x,y).
	\end{align*}
	
	The result follows, by induction.		
\end{proof}

We extend the result in Lemma~\ref{lem:multiplicative_c-simple_alternating_dimap}, from c-simple alternating dimaps to c-alternating dimaps.
\begin{theorem}\label{thm:multiplicative_c-alternating_dimap}
	Let $ D $ be a c-union of two c-alternating dimaps $ S_{1} $ and $ S_{2} $. Then, \[ T_{c}(D;x,y)=T_{c}(S_{1};x,y) \cdot T_{c}(S_{2};x,y).\]
\end{theorem}

\begin{proof}
	Similar in style to the proof of Lemma~\ref{lem:multiplicative_c-simple_alternating_dimap}.
\end{proof}

Since there are a few non-isomorphic alternating dimaps that may be denoted by $ G_{3,5} $, we write $ G_{3,5,1} $ (see Figure~\ref{fig:G_{3,5,1}}) for the alternating dimap $ G_{3,5} $ that is obtained by subdividing one of the edges of $ \hbox{alt}_c(G) $ where the plane graph $ G $ is a cycle of size exactly two.
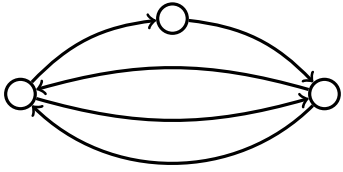
\begin{figure}[t]
	\centering
	\begin{tikzpicture}
	[every path/.style={color=black, line width=1.2pt}, 
	every node/.style={draw, circle, line width=1.2pt},
	every loop/.style={min distance=20mm, in=140, out=220, looseness=20},
	bend angle=45]
	
	\node	(u) at (0,0)	{$ $};
	\node	(w) at (2,1)	{$ $}
		edge[<-, bend right=20]	node[draw=none, above, outer sep=-4pt] {$ $}	(u);
	\node	(v) at (4,0)	{$ $}
		edge[<-, bend right=20]	node[draw=none, above, outer sep=-4pt] {$ $}	(w)
		edge[->, bend right=16]	node[draw=none, below, outer sep=-4pt] {$ $}	(u)
		edge[<-, bend left=16]	(u)
		edge[->, bend left]		(u);
	\end{tikzpicture}
	\caption{Alternating dimap $ G_{3,5,1} $}
	\label{fig:G_{3,5,1}}
\end{figure}

In 2013 (unpublished), Farr proved:
\begin{theorem}\label{thm:minor_farr}
	If $ H \le G $ are alternating dimaps, then there exist $ Y,Z \subseteq E(G) $ such that $ G[\omega]Y[\omega^{2}]Z = H $.
\end{theorem}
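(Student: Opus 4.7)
My plan is to prove this by induction on $|E(G) \setminus E(H)|$. In the base case $E(G) = E(H)$ (and $V(H) = V(G)$, after observing that any isolated vertex of $G$ not in $V(H)$ can be ignored or handled by convention), we have $G = H$ and take $Y = Z = \emptyset$.

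The key observation for the inductive step is the following: for every edge $e \in E(G) \setminus E(H)$, at least one of its successors $\ell_G(e), r_G(e)$ in $G$ also lies outside $E(H)$. To see this, let $e = uv$ and recall that $\ell_G(e)$ and $r_G(e)$ are the two outgoing edges at $v$ cyclically adjacent to $e$ in the rotation around $v$. If both belonged to $E(H)$ while $e \notin E(H)$, then in $H$ the vertex $v$ would have two outgoing edges sitting cyclically adjacent with no incoming edge between them (since the incoming edge $e$ separating them in $G$ has been removed), contradicting the alternating property of $H$. Hence each $e \in E(G) \setminus E(H)$ can be removed by an $\omega$-reduction (when $\ell_G(e) \notin E(H)$) or an $\omega^2$-reduction (when $r_G(e) \notin E(H)$), and in either case the edge absorbed along with $e$ is not in $E(H)$, so $H$ remains a subdimap of the reduced alternating dimap.

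The main obstacle is the ordering constraint that \emph{all} $\omega$-reductions must precede \emph{all} $\omega^2$-reductions. The plan is to set
\[
Y := \{e \in E(G) \setminus E(H) : \ell_G(e) \notin E(H)\},
\qquad
Z := (E(G) \setminus E(H)) \setminus Y,
\]
so by the observation above, every edge in $Z$ has $r_G(e) \notin E(H)$. I would then perform $\omega$-reductions on the edges of $Y$ first, in a carefully chosen order, and $\omega^2$-reductions on the edges of $Z$ afterwards, using the convention that the edge created by an $\omega$-reduction inherits the label of the absorbed left successor (so that the labels in $Z$ still make sense after processing $Y$).

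The technical heart of the argument, and what I expect to be the hardest step, is showing that $Y$ admits an ordering under which every $\omega$-reduction is legal (never deletes an edge of $H$) and leaves the alternating dimap $H$ as a subdimap of the intermediate object, so that after $Y$ is fully processed every edge of $Z$ still satisfies $r \notin E(H)$ in the current alternating dimap. I would do this by defining a directed dependency graph on $Y$ whose arcs point from $e$ to $\ell_G(e)$ when $\ell_G(e) \in Y$, and processing $Y$ in reverse-topological order (or by iteratively removing an edge whose left successor has already been processed), handling the cases of loops and self-successors separately. A cleaner but more indirect alternative would be to first prove a local commutation lemma, showing that whenever a reduction sequence contains an adjacent pair $\omega^2 \omega$, it can be rewritten as $\omega \omega^2$ (possibly with relabeling guided by Theorem~\ref{thm:triality_minor}), and then take an arbitrary reduction sequence obtained by the greedy removal above and normalise it into the required $\omega$-then-$\omega^2$ form.
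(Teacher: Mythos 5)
The paper does not actually prove this statement: it is quoted as an unpublished 2013 result of Farr, so there is no argument of record to compare yours against. Judged on its own terms, your proposal gets the right key observation --- if $e\in E(G)\setminus E(H)$ then the alternating property of $H$ at the head of $e$ forces at least one of $\ell_G(e)$, $r_G(e)$ out of $E(H)$ as well (the two successors are the two outgoing edges rotationally adjacent to $e$, so if both survived they would be consecutive outgoing edges in $H$) --- and this already yields the weaker statement that $H$ is reachable from $G$ by some \emph{interleaved} sequence of $\omega$- and $\omega^{2}$-reductions, by induction on $\abs{E(G)\setminus E(H)}$.

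However, the actual content of the theorem is the separation of the two reduction types into the form $G[\omega]Y[\omega^{2}]Z$, and that is exactly the step you leave unproved. Your primary strategy (reverse-topological order on the arcs $e\to\ell_G(e)$ within $Y$) breaks down when that relation has a directed cycle, which happens precisely when an entire anticlockwise face lies in $Y$; you flag ``loops and self-successors'', but a full a-face contained in $Y$ is neither, and it has to be handled by showing the face collapses step by step to an $\omega$-loop that is then deleted. More seriously, you never verify the invariant your plan depends on: that after all of $Y$ has been processed, every edge of $Z$ (under your relabelling convention) still has its \emph{current} right successor outside $E(H)$, even though the $\omega$-reductions have deleted, rerouted and replaced edges around the relevant vertices. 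The alternative via a local commutation rule $\omega^{2}\omega\rightsquigarrow\omega\omega^{2}$ is also not immediate: Theorem~\ref{thm:triality_minor} relates reductions of $D$ to reductions of $D^{\mu}$, it does not give commutation of distinct reduction types on the same dimap, and the interacting cases (for instance, when the edge consumed by one reduction is the edge created by the other) require genuine case analysis that you do not supply. As it stands the proposal is a plausible plan with the hardest step outstanding, not a proof.
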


\begin{fact}\label{fact:c-cycle block_single_face}
	A block of an alternating dimap is a c-cycle block if and only if the block contains exactly one anticlockwise face and exactly one clockwise face.
\end{fact}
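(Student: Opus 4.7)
The plan is to prove both directions separately, using the definition for the easy direction and a careful edge-face incidence argument for the converse.

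For the forward direction, suppose $B$ is a c-cycle block. By definition, $|V(B)|=|E(B)|$ and $B$ is a block forming a clockwise face. A connected graph with $|V|=|E|$ and no cutvertex must be a simple cycle, so $B$ is a directed cycle. Viewed on its own, a directed cycle bounds exactly two faces: the prescribed clockwise one, and the face on the opposite side. The latter must be anticlockwise because the alternating in/out arrangement at each vertex forces the two sides of a directed cycle to be traversed in opposite orientations relative to the edge directions, so one side is a c-face and the other is an a-face.

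For the reverse direction, suppose $B$ is a block with exactly one clockwise face $F_c$ and exactly one anticlockwise face $F_a$. The key observation I would invoke is that in any alternating dimap each edge has a clockwise face on one side and an anticlockwise face on the other; this follows directly from the fact that the edges around each vertex alternate in/out, which forces the face-tracing walks on opposite sides of an edge to be traversed in opposite directions relative to the edge. Hence every edge of $B$ must have $F_c$ on one side and $F_a$ on the other, so the boundary walk of $F_c$ traverses each edge of $B$ exactly once in its forward direction. Since $B$ is a block and $F_c$ is a $2$-cell, this boundary walk is a simple directed cycle that uses every edge of $B$, forcing $B$ itself to be this cycle. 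In particular $|V(B)|=|E(B)|$ and $B$ bounds the given clockwise face, so $B$ is a c-cycle block.

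The trickiest step I anticipate is the claim that the $F_c$-boundary of a block is a simple cycle, which is standard for $2$-connected graphs on the sphere but less immediate for alternating dimaps on higher-genus surfaces. I would supplement the argument using Euler's formula applied to $B$ as a $2$-cell embedded alternating dimap: with $|f(B)|=2$ we get $|V(B)|-|E(B)|+2 = 2-2g(B)$, i.e.\ $|E(B)|-|V(B)| = 2g(B)$. Combining this with the Eulerian-circuit structure obtained above and the block property (no cutvertex) forces $g(B)=0$ and $|V(B)|=|E(B)|$, which is exactly what is required to conclude that $B$ is a (simple) directed cycle and hence a c-cycle block.
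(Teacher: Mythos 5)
The paper gives no proof of this Fact at all --- it is asserted as a bare observation inside Section~\ref{sec:Tutte_invariant_vs_Tutte_polynomial}, which explicitly restricts attention to alternating dimaps $2$-cell embedded on a surface of genus zero --- so there is no ``paper proof'' to compare against. Within that genus-zero setting your argument is essentially the right one and is complete once Euler's formula is invoked: $\abs{f(B)}=2$ and $g(B)=0$ give $\abs{V(B)}=\abs{E(B)}$, and since $B$ is connected with every vertex of even degree and cyclomatic number $\abs{E(B)}-\abs{V(B)}+1=1$, every degree equals two and $B$ is a single directed cycle bounding the clockwise face. Your forward direction, and the observation that each edge lies on exactly one c-face and one a-face (so a unique c-face has an Eulerian circuit of $B$ as its boundary), are also correct.

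The genuine flaw is your final sentence: the block property does \emph{not} force $g(B)=0$, and the Fact is false on higher-genus surfaces. Take vertices $u,v$ and edges $a=uv$, $b=uv$, $c=vu$, $d=vu$, with rotation $(a^{\mathrm{out}},c^{\mathrm{in}},b^{\mathrm{out}},d^{\mathrm{in}})$ at $u$ and $(c^{\mathrm{out}},b^{\mathrm{in}},d^{\mathrm{out}},a^{\mathrm{in}})$ at $v$. The edge-ends alternate in and out at both vertices, so this is an alternating dimap; it is $2$-connected, hence a block; and tracing faces shows the right-successor and left-successor permutations each consist of a single orbit of length four, so there is exactly one clockwise face and exactly one anticlockwise face. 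Euler's formula gives $2-4+2=0$, i.e.\ genus one, and $\abs{E}=\abs{V}+2$, so this block is not a c-cycle block. (This is the sort of toral example the authors allude to in their concluding remarks.) Your claim that the $F_c$-boundary of a block must be a simple cycle likewise fails here: that boundary is an Eulerian circuit visiting each vertex twice, even though the face is a $2$-cell. The correct fix is not to derive $g(B)=0$ from the hypotheses but to assume it, as the ambient section does; with that assumption made explicit, your proof is correct.
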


By using Theorem~\ref{thm:minor_farr}, we now show that certain alternating dimaps contain $ G_{3,5,1} $ as a minor.
\begin{lemma}\label{lem:G_{3,5,1}}
	Every non-loop block of an alternating dimap that is neither a c-cycle block nor an element of $ \hbox{alt}_c(\mathcal{G}) $ contains $ G_{3,5,1} $ as a minor.
\end{lemma}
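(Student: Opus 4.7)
The plan is to produce $G_{3,5,1}$ as a minor of $B$ by combining the existence of a large clockwise face (forced by $B \notin \hbox{alt}_c(\mathcal{G})$) with the extra structure that a 2-connected non-c-cycle block must carry beyond a single cycle, then applying a sequence of contractions and $\omega$/$\omega^2$-reductions in the style of Lemma~\ref{lem:minors}. To begin, I would observe that Lemma~\ref{lem:alt_c_size_two} applied contrapositively yields a clockwise face of $B$ whose size is not exactly $2$; a c-face of size $1$ would be an $\omega^2$-loop, which is a block on its own and hence cannot occur inside the non-loop block $B$. So $B$ has a c-face $h$ with boundary cycle $C = v_0 v_1 \cdots v_{k-1} v_0$ of length $k \geq 3$. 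Next, because $B$ is not a c-cycle block, Fact~\ref{fact:c-cycle block_single_face} forces $B$ to have more than one clockwise face or more than one anticlockwise face; together with 2-connectivity this produces a vertex $v \in V(C)$ of degree at least $4$ with at least one incident edge outside $E(C)$.

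Fix such a $v$ with its two $C$-incident edges $e_1 = u \to v$ and $e_2 = v \to w$, noting $u \neq w$ since $k \geq 3$. Mirroring the approach of Lemma~\ref{lem:minors}, I would partition the incident edges at $v$ outside $E(C)$ into sets $S_c$ and $S_a$ according to their cyclic position at $v$ relative to $e_1, e_2$, then $\omega^2$-reduce the edges in $S_c$ and $\omega$-reduce the edges in $S_a$. This clears the clutter at $v$ without destroying the subpath $u \to v \to w$ of $\partial h$, while leaving behind a secondary directed connection from $u$ to $w$ through the rest of $B$ that shares only its endpoints with $u \to v \to w$. I would then contract and/or $\omega^2$-reduce along the rest of $C$ to shrink $h$ to size exactly $3$ on $\{u,v,w\}$, and contract along the secondary connection, invoking Lemma~\ref{lem:bigger_c-face_to_alt_c} or Lemma~\ref{lem:bigger_a-face_to_alt_a} as needed, to collapse it into a single pair of antiparallel edges between $u$ and $w$. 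What remains is a copy of $G_{3,5,1}$, with three vertices, five edges, and clockwise faces of sizes $3$ and $2$; one can then invoke Theorem~\ref{thm:minor_farr} if we need to present the outcome as a minor via $\omega$ and $\omega^2$ reductions only.

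The hard part will be confirming that the secondary connection, after its own reductions, really delivers a clockwise face of size $2$, rather than (for example) collapsing into an anticlockwise face of size $2$ or some other $3$-vertex, $5$-edge configuration that is not $G_{3,5,1}$. I expect a case analysis analogous to that of Lemma~\ref{lem:minors}: one case where the secondary route lies in a clockwise region of $B$ (handled by $\omega^2$-reductions and Lemma~\ref{lem:bigger_c-face_to_alt_c}) and a dual case where it lies in an anticlockwise region (handled by $\omega$-reductions and Lemma~\ref{lem:bigger_a-face_to_alt_a}), with triality providing a shortcut between the two. A subtlety to watch is that intermediate reductions must not accidentally disconnect $B$ or split off a block containing the c-face $h$ from the edges providing the secondary route, so the order in which edges outside $C$ are processed must be chosen to preserve the relevant 2-connectivity of the residual minor until the final collapse takes place.
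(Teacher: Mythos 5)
Your overall strategy (shrink a large clockwise face to a triangle, attach a digon between two of its vertices, and finish with Theorem~\ref{thm:minor_farr}) has the right shape, and your opening steps are sound: ruling out clockwise faces of size one and extracting a clockwise face of size at least three via Lemma~\ref{lem:alt_c_size_two} is exactly how the paper begins. But there is a genuine gap precisely where you flag ``the hard part'', and it is not a detail that a routine case analysis fills in. You never establish that a ``secondary directed connection from $u$ to $w$'' exists after the $\omega$/$\omega^{2}$-reductions at $v$ (those reductions delete each chosen edge together with its successor and reroute the remainder, so two-connectivity of $B$ does not automatically survive into the reduced minor), and even granting its existence you have no mechanism forcing it to collapse to a \emph{clockwise} face of size two rather than an anticlockwise digon or some other three-vertex, five-edge configuration. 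Since $G_{3,5,1}$ is precisely ``a clockwise face of size three sharing two vertices with a clockwise face of size two'', this orientation question is the entire content of the lemma, and your proposal leaves it open.

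The paper closes this gap with a device your proposal does not use: from Fact~\ref{fact:c-cycle block_single_face} it first deduces that $B$ has at least two clockwise faces, picks a \emph{second} clockwise face $f_{2}$ with an edge $e_{2}\notin E(f_{1})$, and uses the absence of cutvertices to find a circuit through $e_{1}\in E(f_{1})$ and $e_{2}$. The boundary of $f_{2}$ is then split into the arc $P_{1}$ lying on that circuit (joining two vertices of $f_{1}$ and containing $e_{2}$) and the complementary arc $P_{2}$; contracting $f_{1}$ to a triangle and each of $P_{1}\setminus\{e_{2}\}$ and $P_{2}$ to a single edge leaves $\{e_{2},e_{3}\}$ as the contracted image of the clockwise face $f_{2}$, so the resulting digon is automatically a clockwise face of size two. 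All remaining edges are then deleted and Theorem~\ref{thm:minor_farr} converts the resulting subdimap into a genuine minor. To repair your argument you would need to replace the vaguely specified ``secondary connection'' by the boundary of a second clockwise face in this way, and also dispose of the edges of $B$ that are neither at $v$ nor on $C$ nor on that connection, which your local reductions at $v$ never touch.
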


\begin{proof}
	\begin{figure}[H]
		\centering
		\includegraphics[width=0.55\textwidth]{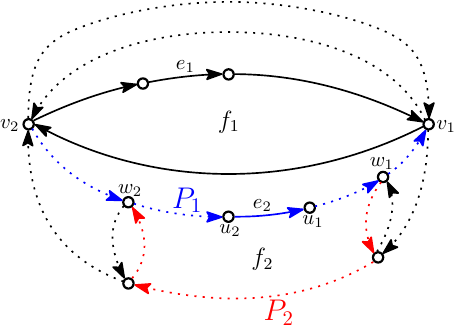}
		\caption{The block $ B $ in the proof of Lemma~\ref{lem:G_{3,5,1}}}
		\label{fig:Minor_G_35}
	\end{figure}

	Let $ B $ be a non-loop block of an alternating dimap that is neither a c-cycle block nor an element of $ \hbox{alt}_c(\mathcal{G}) $. By Fact~\ref{fact:c-cycle block_single_face}, the former implies that the number of a-faces or the number of c-faces of $ B $ is at least two. The existence of at least two a-faces (respectively, c-faces) in a block implies that the number of c-faces (respectively, a-faces) of the block is also at least two. Since $ B $ is a non-loop block, it contains no proper $ \omega^{2} $-loops. By Lemma~\ref{lem:alt_c_size_two}, if $ B $ is an element of $ \hbox{alt}_c(\mathcal{G}) $, every clockwise face of $ B $ has size exactly two. Hence, at least one of the c-faces $ f_{1} $ of $ B $ has size greater than two.
	
	Let $ e_{1} \in E(\partial f_{1}) $. Since $ B $ contains more than one c-face, there exists an edge $ e_{2} $ such that $ e_{2} \notin E(\partial f_{1}) $ and $ e_{2} \in E(\partial f_{2}) $ where $ f_{2} $ is another c-face in $ B $. Since $ B $ contains no cutvertex, there exists a circuit $ C $ that contains both $ e_{1} $ and $ e_{2} $. Let $ u_{1} $ be the head and $ u_{2} $ be the tail of $ e_{2} $. Pick the vertex $ u_{1} $ and traverse $ C $ until the first vertex $ v_{1} \in V(\partial f_{1}) $ is met. Then, pick $ u_{2} $ and traverse $ C $ in the opposite direction and stop once another vertex $ v_{2} \in V(\partial f_{1}) $ is met. Let $ P_{1} $ (highlighted in blue in Figure~\ref{fig:Minor_G_35}) be the path in $ C $ that has $ v_{1} $ and $ v_{2} $ as its endvertices, $ P_{1} $ does not use any edge that belongs to $ f_{1} $ and $ e_{2} \in E(P_{1}) $. Now, observe that vertices $ v_{1} $ and $ v_{2} $ both have degree at least three (the vertices $ v_{1} $ and $ v_{2} $ both belong to $ f_{1} $, and $ P_{1} $ is incident with both of them). Let  $ w_{1},w_{2} \in V(\partial f_{2}) $ and $ P_{2} $ (highlighted in red) be a $ w_{1}w_{2} $-path in $ f_{2} $ such that $ E(P_{2})=E(\partial f_{2})\setminus E(P_{1}) $.
	
	Note that $ f_{1} $ can be contracted to a c-face $ f' $ of size three that contains three vertices (since every c-face of size greater than two can be contracted to a c-face of size exactly two, by Lemma~\ref{lem:bigger_c-face_to_alt_c}). Then, by contracting every edge $ g_{3} \in E(P_{1})\setminus \{e_{2}\} $, the path $ P_{1} $ is reduced to a path of length one that is incident with two of the vertices $ v_{1}', v_{2}' \in V(\partial f') $. Observe that $ P_{2} $ now has $ v_{1}' $ and $ v_{2}' $ as both of its endvertices. Contract every edge in $ P_{2} $ except one, leaving an edge $ e_{3} $ that is incident with $ v_{1}' $ and $ v_{2}' $. Suppose $ E_{1}=E(\partial f') \cup \{e_{2},e_{3}\} $. Then, delete every edge $ h \in E(B)\setminus E_{1} $ to obtain an alternating dimap $ S \cong G_{3,5,1} $. Since $ S \le B $, by Theorem~\ref{thm:minor_farr} there exist $ Y,Z \subseteq E(B) $ such that $ B[\omega]Y[\omega^{2}]Z = S $.
\end{proof}

We next show that certain alternating dimaps contain $ G^{c}_{2,3} $ as a minor. The alternating dimap $ G^{c}_{2,3} $ is shown in Figure~\ref{fig:G_{n,m}}.
 
\begin{lemma}\label{lem:G_{2,3}_c-simple}
	Let $ D $ be an alternating dimap such that there exists a block within a clockwise face of some other block and they form a clockwise face of size greater than two. Then $ D $ has $ G^{c}_{2,3} $ as a minor.
\end{lemma}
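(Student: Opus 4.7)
The plan is to perform a sequence of $1$-, $\omega$-, and $\omega^{2}$-reductions on $D$ that yields a minor isomorphic to $G^{c}_{2,3}$. Let $B_{1}$ be the block within the clockwise face $g \in f(B_{2})$, and let $g' \in f(D)$ be the resulting clockwise face of size $m > 2$. Since $D$ is $2$-cell embedded, $\partial g'$ is a single closed trail whose edges come from both $B_{1}$ and $B_{2}$, so the two blocks share at least one common vertex $v$ on $\partial g'$, which is necessarily a cutvertex of $D$.

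First I would strip $D$ down to $B_{1} \cup B_{2}$ joined at $v$ by applying $\omega$- and $\omega^{2}$-reductions to edges of $D$ lying in other blocks. Next I would reduce $B_{2}$ to a digon between $v$ and a new vertex $u$: by Lemma~\ref{lem:bigger_c-face_to_alt_c}, the clockwise face $g$ of $B_{2}$ can be contracted to size exactly two via a sequence of contractions of edges of $\partial g$ not incident with $v$, and any residual edges of $B_{2}$ lying outside $\partial g$ can be eliminated by further reductions, leaving only the two digon edges between $u$ and $v$. Finally I would reduce $B_{1}$ to a single proper $\omega$-loop at $v$: since the edges of $B_{1}$ on $\partial g'$ form a closed trail at $v$ with $B_{1}$ lying within a clockwise face of $B_{2}$, a sequence of $\omega$- and $\omega^{2}$-reductions starting from edges farthest from $v$ (and invoking Lemma~\ref{lem:bigger_a-face_to_alt_a} on any internal anticlockwise faces of $B_{1}$) collapses $B_{1}$ to a single loop at $v$, which by its position within the clockwise face of the emerging digon is a proper $\omega$-loop.

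The resulting alternating dimap has exactly two vertices $\{u,v\}$ and three edges---two digon edges and a proper $\omega$-loop at $v$---together bounding a clockwise face of size three, so it is isomorphic to $G^{c}_{2,3}$. An appeal to Theorem~\ref{thm:minor_farr} then confirms that $G^{c}_{2,3}$ is a minor of $D$.

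The main obstacle will be the controlled reduction of $B_{1}$ to a single proper $\omega$-loop at $v$ while keeping $\partial g'$ of size at least three throughout. Because the reduction operations do not commute in general, the order of operations must be chosen with care so as not to collapse $g'$ prematurely or to produce a loop of the wrong type (an $\omega^{2}$-loop would give an a-face of size three instead of a c-face of size three, yielding $G^{a}_{2,3}$ rather than $G^{c}_{2,3}$). A secondary induction on $|E(B_{1})|$ analogous to the arguments in the proofs of Lemmas~\ref{lem:minors_G_13}, \ref{lem:minors_G_23a}, and \ref{lem:G_{3,5,1}} should suffice to formalise this.
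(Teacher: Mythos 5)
Your proposal is correct in outline and reaches exactly the paper's target configuration --- a digon arising from $\partial g$ together with a loop at the cutvertex $v$ arising from $B_{1}$ --- but it is organised differently: you work block-by-block, whereas the paper works face-first. The paper's proof first $\omega^{2}$-reduces every edge in $I(w)\setminus E(g')$ for every $w\in V(g')$, and only then contracts every remaining edge outside $I(u)\cup I(v)$. This ordering dissolves what you identify as the main obstacle: since each edge of an alternating dimap lies on the boundary of exactly one clockwise face, an edge not in $E(g')$ has its right successor also not in $E(g')$, so these $\omega^{2}$-reductions can never delete an edge of $\partial g'$, and the large clockwise face survives untouched until the final contraction stage; no careful ``farthest from $v$ first'' induction on $\abs{E(B_{1})}$ is required. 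Your secondary worry --- that $B_{1}$ might collapse to a proper $\omega^{2}$-loop and yield $G^{a}_{2,3}$ --- is likewise automatic rather than something to be engineered: once $B_{1}$ has become a single loop whose outer side lies on the clockwise face of size three, its inner side must be the unique anticlockwise face containing that edge, so the loop is forced to be a proper $\omega$-loop. What your route buys is modularity (each block is dispatched by an existing lemma such as Lemma~\ref{lem:bigger_c-face_to_alt_c}); what the paper's route buys is that the non-commutativity of the reductions never becomes an issue, so the argument closes without the extra induction you propose.
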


\begin{proof}
	\begin{figure}[ht]
		\centering
		\includegraphics[width=0.4\textwidth]{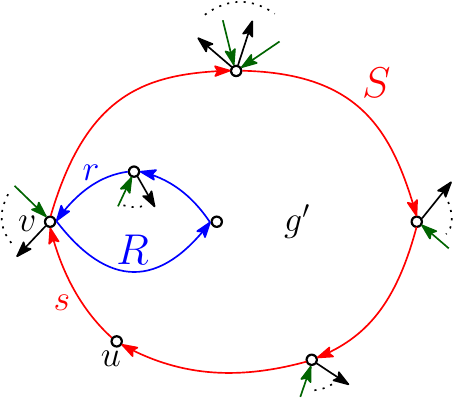}
		\caption{Two blocks $ B_{1} $ and $ B_{2} $ in the proof of Lemma~\ref{lem:G_{2,3}_c-simple}}
		\label{fig:G_2,c3}
	\end{figure}
	
	Suppose an alternating dimap $ D $ contains two blocks $ B_{1} $ and $ B_{2} $ such that these two blocks share exactly one common vertex $ v $, and $ B_{1} $ is within one of the c-faces $ g $ of $ B_{2} $. Let $ g' \in F(D) $ be the c-face of size greater than two that is formed by the boundary of $ g $ and some edges of $ B_{1} $.
	
	Since $ g' $ has size greater than two, it contains at least two vertices including $ v $. The fact that $ v $ has degree greater than two in $ D $ implies that every edge $ e_{1} \in I(v) $ is not a proper $ 1 $-loop. Suppose $ R=E(B_{1}) \cap E(\partial g') $ and $ S=E(B_{2}) \cap E(\partial g') $. Note that $ S $ is the boundary of the face $ g $ in $ B_{2} $. Let $ r \in R \subseteq E(B_{1}) $ and $ s \in S \subseteq E(B_{2}) $ and $ r,s \in E(\partial g') \cap I(v) $. Since $ g' $ is formed by at least three edges, at least one of the two edges $ r $ and $ s $ is a non-loop edge (otherwise $ g' $ is a clockwise face of size two). Without loss of generality, let $ s=uv $ be a non-loop edge. Since $ s \in I(v) $ and is a non-loop edge, $ s $ is a non-triloop edge in $ D $. By Corollary~\ref{cor:minors}, the alternating dimap $ D $ has $ G_{1,3} $ or $ G_{2,3} $ as its minor.
	
	To obtain $ G^{c}_{2,3} $ as a minor in $ D $, for each vertex $ w \in V(\partial g') $ and for each edge $ e_{2} \in I(w) \setminus E(\partial g') $ (green edges in Figure~\ref{fig:G_2,c3}), reduce $ e_{2} $ using $ \omega^{2} $-reduction. Now, $ g' $ belongs to a component $ P $ that has exactly two blocks. Let $ T=I(u) \cup I(v) $ in $ P $. By contracting every edge $ e_{3} \in E(P) \setminus T $, we obtain a component $ P' \cong G^{c}_{2,3} $. Since $ P' \le D $, by Theorem~\ref{thm:minor_farr}, there exist $ Y,Z \subseteq E(D) $ such that $ D[\omega]Y[\omega^{2}]Z = P' $.
	
	Therefore, $ D $ has $ G^{c}_{2,3} $ as a minor.
\end{proof}

\begin{lemma}\label{lem:not_well_defined_G{2,c3}}
	The c-Tutte invariant is not well defined for the alternating dimap $ G^{c}_{2,3} $.
\end{lemma}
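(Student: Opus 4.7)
The plan is to show $T_c$ is not well defined on $G^c_{2,3}$ by exhibiting two edge-orderings that give different polynomials in $X$ and $Y$. The argument parallels the sketch of Lemma~\ref{lem:not_well_defined_G{2,3}} but specialises the sixteen generic extended-Tutte parameters to their c-Tutte values. First I would invoke part (c) of Corollary~\ref{cor:derived_polynomials_G_{1,3}_G^{a}_{2,3}_G^{c}_{2,3}}, which records the only distinct derived polynomials of $G^c_{2,3}$ under the complete extended Tutte invariant as $wxy$ and $gwy + hww + iwx$; these arise, respectively, from an edge-ordering that reduces a triloop first and one that reduces the proper $\omega^2$-semiloop of $G^c_{2,3}$ first.

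Next I would read off the parameter substitution that turns the complete extended Tutte invariant into the c-Tutte invariant, by matching \hyperref[def:T_c]{(TC1)}--\hyperref[def:T_c]{(TC5)} of Definition~\ref{def:T_c} against \hyperref[def:Extended_Tutte_Invariant]{(ETI1)}--\hyperref[def:Extended_Tutte_Invariant]{(ETI8)} of Definition~\ref{def:Extended_Tutte_Invariant}. Because proper $1$-, $\omega$- and $\omega^2$-loops are all triloops, their three reductions coincide and the loop clauses force $w = 1$, $x = X$, $y = Y$, $z = 1$ on the ETI side. Matching the single-branch semiloop formulas of $T_c$ against the three-branch formulas \hyperref[def:Extended_Tutte_Invariant]{(ETI5)}--\hyperref[def:Extended_Tutte_Invariant]{(ETI7)} pins down $a = Y$, $f = X$, $h = 1$ with $b = c = d = e = g = i = 0$; the proper-edge case gives $j = l = 1$, $k = 0$.

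Applying this substitution to the two polynomials listed above yields $wxy \mapsto XY$ and $gwy + hww + iwx \mapsto 0 + 1 + 0 = 1$. Since $XY$ and $1$ are distinct in $\mathbb{E}[X,Y]$, the c-Tutte invariant assumes two different values on $G^c_{2,3}$ according to the edge-ordering used, so it is not well defined there. The only delicate step is the clause-by-clause parameter substitution; once it is written down, the conclusion $XY \neq 1$ is automatic and no further combinatorial input is needed.
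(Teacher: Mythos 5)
Your proof is correct and takes essentially the same route as the paper: the paper's (sketch) proof simply exhibits the two distinct derived polynomials $xy$ and $1$ for $G^{c}_{2,3}$, which are exactly your $XY$ and $1$. Your clause-by-clause parameter substitution agrees with the one the paper itself records as (\ref{eqn:CG1}) in the proof of Proposition~\ref{pro:T_c_general}, so routing the computation through Corollary~\ref{cor:derived_polynomials_G_{1,3}_G^{a}_{2,3}_G^{c}_{2,3}}(c) is just a tidier organisation of the same calculation.
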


\begin{proof}[Proof (sketch)]
	By reducing the alternating dimap $ G^{c}_{2,3} $ using different edge-orderings, we obtain two distinct derived polynomials $ xy $ and $ 1 $.
\end{proof}

By using a similar approach, we show that the c-Tutte invariant is not well defined for alternating dimap $ G_{3,5,1} $.

\begin{lemma}\label{lem:not_well_defined_G{3,5}}
	The c-Tutte invariant is not well defined for the alternating dimap $ G_{3,5,1} $.
\end{lemma}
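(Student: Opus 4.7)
The plan is to apply the same strategy as in Lemmas~\ref{lem:not_well_defined_G{1,3}}, \ref{lem:not_well_defined_G{2,3}} and \ref{lem:not_well_defined_G{2,c3}}: exhibit two edge-orderings of $G_{3,5,1}$ for which the recursive application of Definition~\ref{def:T_c} yields two distinct derived polynomials. I would first label the edges of $G_{3,5,1}$ so that $a, b, c$ form the clockwise face of size three with $a$ incident to the degree-two subdivision vertex, and $d, e$ form the clockwise face of size two; a direct inspection via Section~\ref{sec:type_of_loops} then shows that $a$ is a proper $1$-loop (its head has degree two), while the remaining four edges are proper edges.

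For the first ordering I would take $a$ first, which triggers clause (TC2) and yields $T_{c}(G_{3,5,1};x,y) = x \cdot T_{c}(G_{3,5,1}[\omega^{2}]a;x,y)$; this reduction deletes $a$ together with its right successor $b$, removes the subdivision vertex, and should leave a dimap isomorphic to $\hbox{alt}_{c}(G_{1})$, where $G_{1}$ is the plane multigraph on two vertices joined by two parallel edges. By Theorem~\ref{thm:T_G=T_c_alt_c(G)} and $T(G_{1};x,y) = x+y$, the first derived polynomial is $x^{2}+xy$. For the second ordering I would take $b$ first: since $b$ is a proper edge, clause (TC5) yields $T_{c}(G_{3,5,1};x,y) = T_{c}(G_{3,5,1}[1]b;x,y) + T_{c}(G_{3,5,1}[\omega^{2}]b;x,y)$, where the $1$-reduction of $b$ should again produce $\hbox{alt}_{c}(G_{1})$ (contributing $x+y$), while the $\omega^{2}$-reduction should produce a dimap isomorphic to $\hbox{alt}_{c}(G_{2})$, with $G_{2}$ the plane path on three vertices with two edges (contributing $T(G_{2};x,y) = x^{2}$). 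Thus the second derived polynomial is $x^{2}+x+y$.

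Since $x^{2}+xy \neq x^{2}+x+y$, the two orderings would yield distinct polynomials, establishing the lemma. The main obstacle I anticipate is verifying that the intermediate reduced dimaps are indeed isomorphic to the claimed $\hbox{alt}_{c}$-forms: this amounts to tracking the cyclic orderings at each vertex, and the resulting face structure, through each reduction. A secondary point requiring care is confirming that the edges $b, c, d, e$ are really proper edges and not semiloops in disguise, that is, that no edge of $G_{3,5,1}$ together with its right or left successor forms a cutset of $G_{3,5,1}$; this is routine but must be done to be sure clause (TC5), rather than (TC2) or (TC4), applies in the second ordering.
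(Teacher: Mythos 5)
Your proposal is correct and follows essentially the same route as the paper, whose proof is only a sketch stating that two edge-orderings yield the distinct derived polynomials $x^{2}+xy$ and $x^{2}+x+y$; your explicit choice of first edge (the proper $1$-loop into the subdivision vertex versus a proper edge of the clockwise $3$-face) and the resulting reductions to $\hbox{alt}_c$ of a digon and of a path reproduce exactly those two polynomials.
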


\begin{proof}[Proof (sketch)]
	By reducing the alternating dimap $  G_{3,5,1} $ using different edge-orderings, we obtain two distinct derived polynomials $ x^{2} + xy $ and $ x^{2}+x+y $.
\end{proof}

\begin{theorem}\label{thm:well defined_c-Tutte}
	The c-Tutte invariant is well defined for an alternating dimap $ D $ if and only if $ D $ is a c-alternating dimap.
\end{theorem}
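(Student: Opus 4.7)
The plan is to prove both directions by combining the structural minor theorems with the multiplicativity results already established in this section. For the forward direction, I will argue by contrapositive: suppose $D$ is not a c-alternating dimap, and extract a minor already known to break well-definedness. Unpacking the definition, the failure of $D$ to be a c-alternating dimap arises in one of two ways: (i) $D$ contains a non-loop block that is neither a c-cycle block nor an element of $\hbox{alt}_c(\mathcal{G})$, in which case Lemma~\ref{lem:G_{3,5,1}} supplies $G_{3,5,1}$ as a minor of $D$; or (ii) $D$ contains some block situated within a clockwise face of another block so that together they form a c-face of size at least three (so that the placement is not accounted for by the c-multiloop-in-a-face construction), in which case Lemma~\ref{lem:G_{2,3}_c-simple} supplies $G^{c}_{2,3}$ as a minor of $D$. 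Then Lemma~\ref{lem:not_well_defined_G{3,5}} or Lemma~\ref{lem:not_well_defined_G{2,c3}} exhibits two edge-orderings on the bad minor producing distinct derived polynomials, and Proposition~\ref{pro:well_defined} lifts this non-well-definedness back to $D$.

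For the backward direction, suppose $D$ is a c-alternating dimap; I will induct on the number of blocks of $D$. In the base case $D$ consists of a single block which, by the definition of a c-alternating dimap together with the definition of a c-simple alternating dimap, must be either a c-cycle block, an element of $\hbox{alt}_c(\mathcal{G})$, or a c-multiloop. Lemma~\ref{lem:c-cycle_blocks} handles the first case, yielding $x^{m-1}$ independently of the edge-ordering; Theorem~\ref{thm:T_G=T_c_alt_c(G)} reduces the second case to the Tutte polynomial of the underlying plane graph, whose well-definedness is classical; and Lemma~\ref{lem:c-multiloops} handles the third, yielding $y^{m-k}$ independently of the edge-ordering. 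For the inductive step, write $D$ as a c-union $S_1 \cup_c S_2$ of two c-alternating dimaps each with strictly fewer blocks; Theorem~\ref{thm:multiplicative_c-simple-alternating_dimap} gives $T_c(D;x,y) = T_c(S_1;x,y) \cdot T_c(S_2;x,y)$, and the inductive hypothesis supplies well-definedness of each factor, whence of $D$.

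The main obstacle is the case analysis in the forward direction: I must verify that every way in which $D$ fails to be a c-alternating dimap does indeed force one of the two bad minors, with particular care needed for loop-only blocks and for the precise placement of c-multiloops, so that the block-shape-and-position characterisation covers exactly the complement of the minor-forbidden class. A secondary subtlety in the backward direction is that Theorem~\ref{thm:multiplicative_c-simple-alternating_dimap} must be applied for an arbitrary edge-ordering of $D$, and hence for arbitrary interleavings of reductions across $S_1$ and $S_2$; this is already implicit in the inductive proof of that theorem, since reducing any edge of a fixed block commutes with the block decomposition and the base-case computation of $T_c$ on each block type does not depend on how the block's edges are interleaved with those of the other blocks.
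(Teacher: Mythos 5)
Your proposal is correct and follows essentially the same route as the paper: the same two-case contrapositive for the forward direction via Lemmas~\ref{lem:G_{3,5,1}} and~\ref{lem:G_{2,3}_c-simple} combined with Lemmas~\ref{lem:not_well_defined_G{3,5}}, \ref{lem:not_well_defined_G{2,c3}} and Proposition~\ref{pro:well_defined}, and the same assembly of Lemma~\ref{lem:c-cycle_blocks}, Theorem~\ref{thm:T_G=T_c_alt_c(G)}, Lemma~\ref{lem:c-multiloops} and Theorem~\ref{thm:multiplicative_c-simple-alternating_dimap} for the converse. Your explicit induction on the number of blocks, and your remark about interleaved edge-orderings across blocks, merely make precise what the paper leaves implicit in its appeal to multiplicativity.
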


\begin{proof}
	The forward implication is proved by contradiction using two different cases.
	
	Let $ D $ be an alternating dimap such that the c-Tutte invariant is well defined for $ D $. Suppose $ D $ is not a c-alternating dimap. This implies that $ D $ is not a c-simple alternating dimap after every loop in $ D $ is removed. Thus, either it contains a block that is neither a c-cycle block nor an element of $ \hbox{alt}_c(\mathcal{G}) $, or there exists a block within a clockwise face of some other block.
	
	First, assume that $ D $ contains a block $ B $ that is neither a c-cycle block nor an element of $ \hbox{alt}_c(\mathcal{G}) $. By Lemma~\ref{lem:G_{3,5,1}}, the block $ B $ contains $ G_{3,5,1} $ as a minor. By Lemma~\ref{lem:not_well_defined_G{3,5}}, the c-Tutte invariant is not well defined for $ G_{3,5,1} $. Since $ B $ is a block of $ D $, the alternating dimap $ D $ contains $ G_{3,5,1} $ as a minor. By Proposition~\ref{pro:well_defined}, the c-Tutte invariant is not well defined for $ D $. We reach a contradiction.
	
	Secondly, suppose there exists a block $ B $ in $ D $ such that $ B $ contains another block $ B' $ within one of its clockwise faces. Note that $ B $ and $ B' $ form a clockwise face of size greater than two, else it is a c-multiloop. By Lemma~\ref{lem:G_{2,3}_c-simple}, the alternating dimap $ D $ contains $ G^{c}_{2,3} $ as a minor. By Lemma~\ref{lem:not_well_defined_G{2,c3}}, the c-Tutte invariant is not well defined for $ G^{c}_{2,3} $. By Proposition~\ref{pro:well_defined}, we again get a contradiction. Hence, the forward implication follows.
	
	It remains to show if $ D $ is a c-alternating dimap, then the c-Tutte invariant is well defined for $ D $. Every non-loop block of $ D $ is either a c-cycle block or is an element of $ \hbox{alt}_{c}(\mathcal{G}) $, and $ D $ contains no block within a clockwise face of any other block. In addition, $ D $ may contain some c-multiloops within some of its anticlockwise faces. By Lemma~\ref{lem:c-cycle_blocks}, the c-Tutte invariant is well defined for every c-cycle block. By Theorem~\ref{thm:T_G=T_c_alt_c(G)}, the c-Tutte invariant is also well defined for alternating dimaps that belongs to $ \hbox{alt}_{c}(\mathcal{G}) $. By Lemma~\ref{lem:c-multiloops}, the c-Tutte invariant is also well defined for every c-multiloop. By Theorem~\ref{thm:multiplicative_c-alternating_dimap}, the c-Tutte invariant is multiplicative over non-loop blocks and c-multiloops for any c-alternating dimap. Hence, the c-Tutte invariant is well defined for $ D $.
	
	Therefore, the c-Tutte invariant is well defined for an alternating dimap $ D $ if and only if $ D $ is a c-alternating dimap.		
\end{proof}

We now develop a relationship between plane graphs $ G $ and c-alternating dimaps $ D $, when the Tutte polynomial of $ G $ and the c-Tutte invariant of $ D $ are both identical.

\begin{theorem}\label{thm:Tutte_polynomial_vs_c-Tutte}
	Let $ D $ be a c-alternating dimap, $ G $ be a plane graph and $ G' $ is obtained from $ G $ by deleting all the loops and bridges in $ G $. Let $ R $ and $ S $ be the set of c-cycle blocks and c-multiloops in $ D $, respectively. Let $ E_{R} = \bigcup_{r \in R} E(r) $, $ E_{S} = \bigcup_{s \in S} E(s) $ and $ D'=D\setminus E_{R}\setminus E_{S} $. Then, $ T(G;x,y) = T_{c}(D;x,y) $ if and only if $ T_{c}(\hbox{alt}_c(G');x,y) = T_{c}(D';x,y) $ and $ G $ contains $ \sum_{r \in R} \left( \abs{r}-1 \right) $ bridges and $ \sum_{s \in S} \left( \abs{s}-\cface(s) \right) $ loops.
\end{theorem}

\noindent
\textbf{Remark}: $ T(G;x,y) = T_{c}(D;x,y) $ if and only if $ T_{c}(\hbox{alt}_c(G);x,y) = T_{c}(D;x,y) $. So Theorem~\ref{thm:Tutte_polynomial_vs_c-Tutte} is about situations of $ T_{c} $-equivalence in alternating dimaps.

\begin{proof}
	Let $ D $, $ G $, $ G' $, $ R $, $ S $, $ E_{R} $ and $ E_{S} $ be as stated. To prove the forward implication, we let $ B $ and $ L $ be the sets of bridges and loops in $ G $, respectively. Suppose $ \abs{B}=p $ and $ \abs{L}=q $. Since the Tutte polynomial is multiplicative over blocks, we have
	\begin{equation*}
		T_{c}(D;x,y) = T(G;x,y) = x^{p} \cdot y^{q} \cdot T(G';x,y).
	\end{equation*}
	By Theorem~\ref{thm:multiplicative_c-alternating_dimap}, the c-Tutte invariant is multiplicative over c-cycle blocks, elements of $ \hbox{alt}_c(\mathcal{G}) $ and c-multiloops. By Definition~\ref{def:T_c}, a factor of $ x $ is introduced when a proper $ 1 $-loop or a proper $ \omega $-semiloop is reduced. In any c-alternating dimap, proper $ 1 $-loops and proper $ \omega $-semiloops can only be found in c-cycle blocks. Note that if a plane graph $ H $ has a single edge, then $ \hbox{alt}_c(H) $ is also a c-cycle block of size two. By Lemma~\ref{lem:T_c-c-cycle_blocks} and using the fact that the c-Tutte invariant is multiplicative over c-cycle blocks, we have $ p = \sum_{r \in R} \left( \abs{r}-1 \right) $. Likewise, a factor of $ y $ is introduced when a proper $ \omega $-loop or a proper $ 1 $-semiloop is reduced. These two types of edges can only be found in c-multiloops. By Lemma~\ref{lem:T_c-c-multiloops} and using the fact that the c-Tutte invariant is multiplicative over c-multiloops, we have $ q = \sum_{s \in S} \left( \abs{s}-\cface(s) \right) $. After each c-cycle block and each c-multiloop is reduced in $ D $, we obtain $ D'=D\setminus E_{R}\setminus E_{S} $. It can also be seen that $ D' $ is an element of $ \hbox{alt}_c(\mathcal{G}) $. Since
	\begin{equation*}
		x^{p} \cdot y^{q} \cdot T(G';x,y) = T_{c}(D;x,y) = x^{p} \cdot y^{q} \cdot T_{c}(D';x,y),
	\end{equation*}
	we have
	\begin{equation}\label{eqn:Tutte_polynomial_vs_c-Tutte}
		T(G';x,y) = T_{c}(D';x,y).
	\end{equation}
	Since $ G' $ is a plane graph, by Theorem~\ref{thm:T_G=T_c_alt_c(G)} and (\ref{eqn:Tutte_polynomial_vs_c-Tutte}), we obtain
	\begin{equation*}
		T_{c}(\hbox{alt}_c(G');x,y) = T(G';x,y) = T_{c}(D';x,y).
	\end{equation*}	
	
	The backward implication follows immediately, by Theorem~\ref{thm:T_G=T_c_alt_c(G)}, Lemmas~\ref{lem:T_c-c-cycle_blocks} and \ref{lem:T_c-c-multiloops}, and Theorem~\ref{thm:multiplicative_c-alternating_dimap}.
\end{proof}

\begin{corollary}
	If $ D $ is a c-alternating dimap, then there exists a plane graph $ H $ such that $ T_{c}(D;x,y) = T_{c}(\emph{\hbox{alt}}_c(H);x,y) = T(H;x,y) $.
\end{corollary}

\begin{proof}
	Let $ D $ be a c-alternating dimap, $ R $ and $ S $ be the set of c-cycle blocks and c-multiloops in $ D $, respectively. Let $ E_{R} = \bigcup_{r \in R} E(r) $, $ E_{S} = \bigcup_{s \in S} E(s) $ and $ D'=D\setminus E_{R}\setminus E_{S} $. Every block in $ D' $ is an element of $ \hbox{alt}_c(\mathcal{G}) $. This implies that every clockwise face of $ D' $ has size exactly two. By Lemma~\ref{lem:alt_c_size_two}, there exists an undirected graph $ H' $ such that $ D' \cong \hbox{alt}_{c}(H') $. By adding in $ \sum_{r \in R} \left( \abs{r}-1 \right) $ bridges and $ \sum_{s \in S} \left( \abs{s}-\cface(s) \right) $ loops into $ H' $, we obtain a plane graph $ H $ and $ T_{c}(D;x,y) = T_{c}(\hbox{alt}_c(H);x,y) = T(H;x,y) $.
\end{proof}

By defining an \emph{a-alternating dimap} with appropriate modifications, we have the following two corollaries, based on Theorem~\ref{thm:well defined_c-Tutte} and Theorem~\ref{thm:Tutte_polynomial_vs_c-Tutte}, respectively.

\begin{corollary}
	The a-Tutte invariant is well defined for an alternating dimap $ D $ if and only if $ D $ is an a-alternating dimap. \qed
\end{corollary}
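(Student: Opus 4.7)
The plan is to mirror the proof of Theorem \ref{thm:well defined_c-Tutte}, consistently interchanging the roles of ``clockwise'' and ``anticlockwise'' throughout. First I would supply the missing definitions: an \emph{a-simple alternating dimap} is a loopless alternating dimap in which every block is either an a-cycle block or an element of $\hbox{alt}_a(\mathcal{G})$, and in which no block lies within an anticlockwise face of any other block; an \emph{a-alternating dimap} is obtained from an a-simple alternating dimap by adding some a-multiloops within some of its clockwise faces; and an \emph{a-union} is defined as for a c-union but with ``clockwise'' replaced by ``anticlockwise''.

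Next, each auxiliary result from Lemma \ref{lem:alt_c_size_two} through Theorem \ref{thm:multiplicative_c-simple-alternating_dimap} has a direct a-analog proved by the same induction, simply reading reduction factors off Definition \ref{def:T_a} instead of Definition \ref{def:T_c}: in particular $T_a(D;x,y)=x^{m-1}$ for an a-cycle block of size $m$, $T_a(D;x,y)=y^{m-k}$ for an a-multiloop of size $m$ with $k$ anticlockwise faces, and $T_a$ is multiplicative over a-unions of a-alternating dimaps. On the excluded-minor side, one introduces $G_{3,5,2}$ by subdividing an edge of $\hbox{alt}_a(G)$ for $G$ a cycle of size two, and proves a-analogs of Lemmas \ref{lem:G_{3,5,1}}, \ref{lem:G_{2,3}_c-simple}, \ref{lem:not_well_defined_G{2,c3}} and \ref{lem:not_well_defined_G{3,5}}. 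These show that if $D$ fails to be an a-alternating dimap then $D$ has $G^a_{2,3}$ or $G_{3,5,2}$ as a minor and $T_a$ is not well defined on either, so by Proposition \ref{pro:well_defined} it is not well defined on $D$. Conversely, multiplicativity over a-unions combined with the a-analogs of Lemmas \ref{lem:c-cycle_blocks} and \ref{lem:c-multiloops}, together with Theorem \ref{thm:T_G=T_c_alt_c(G)} (giving well-definedness on $\hbox{alt}_a(\mathcal{G})$), shows $T_a$ is well defined on every a-alternating dimap.

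The main obstacle is just the bookkeeping required to verify that each step of the c-proof translates cleanly under the interchange. A more efficient shortcut, avoiding the case-by-case verification, is to invoke triality: Theorem \ref{thm:triality_minor} together with Table \ref{tab:trial_edge_type} show that the defining recursions of $T_a$ and $T_c$ correspond under the triality operation $D\mapsto D^{\omega}$ (after a matching relabelling of the variables $x,y$), and this operation carries a-alternating dimaps to c-alternating dimaps and vice versa. The corollary is then immediate from Theorem \ref{thm:well defined_c-Tutte}.
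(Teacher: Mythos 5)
Your proposal is correct and follows essentially the same route as the paper, which obtains the corollary precisely by defining a-alternating dimaps via the clockwise/anticlockwise interchange and transporting the whole chain of lemmas behind Theorem~\ref{thm:well defined_c-Tutte}. The triality shortcut you sketch ($T_a(D;x,y)=T_c(D^{\omega};y,x)$, with $D\mapsto D^{\omega}$ exchanging a- and c-alternating dimaps) is a sound and slightly slicker way to package the same symmetry.
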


\begin{corollary}
	Let $ D $ be an a-alternating dimap, $ G $ be a plane graph and $ G' $ is obtained from $ G $ by deleting all the loops and bridges in $ G $. Let $ R $ and $ S $ be the set of a-cycle blocks and a-multiloops in $ D $, respectively. Let $ E_{R} = \bigcup_{r \in R} E(r) $, $ E_{S} = \bigcup_{s \in S} E(s) $ and $ D'=D\setminus E_{R}\setminus E_{S} $. Then, $ T(G;x,y) = T_{a}(D;x,y) $ if and only if $ T_{a}(\hbox{alt}_a(G');x,y) = T_{a}(D';x,y) $ and $ G $ contains $ \sum_{r \in R} \left( \abs{r}-1 \right) $ bridges and $ \sum_{s \in S} \left( \abs{s}-\aface(s) \right) $ loops. \qed
\end{corollary}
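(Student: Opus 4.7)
The plan is to mirror the proof of the immediately preceding theorem for c-alternating dimaps, substituting the a-analogues throughout and invoking triality wherever a corresponding a-version of a lemma is required. Since the earlier corollary after Theorem~\ref{thm:well defined_c-Tutte} guarantees that the a-Tutte invariant is well defined exactly on a-alternating dimaps, and since Theorem~\ref{thm:T_G=T_c_alt_c(G)} already gives $T(G;x,y) = T_a(\hbox{alt}_a(G);x,y)$, the structural ingredients are in place. Under the appropriate a-versions of Lemmas~\ref{lem:c-cycle_blocks}, \ref{lem:c-multiloops}, \ref{lem:T_c-c-cycle_blocks}, \ref{lem:T_c-c-multiloops} and Theorem~\ref{thm:multiplicative_c-simple-alternating_dimap} (obtained either directly by the same inductions or via triality applied to the c-versions), an a-cycle block of size $m$ contributes the factor $x^{m-1}$ and an a-multiloop $s$ with $\aface(s)$ anticlockwise faces contributes $y^{|s|-\aface(s)}$ to $T_a$ (noting that the $\cface(s)$ appearing in the stated corollary should be read as the count matching the triality-partner of the c-case).

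\textbf{Forward direction.} Assume $T(G;x,y) = T_a(D;x,y)$. Let $B$ and $L$ denote the sets of bridges and loops in $G$. By multiplicativity of the Tutte polynomial over blocks, $T(G;x,y) = x^{|B|} y^{|L|} T(G';x,y)$. By the a-analogue of Theorem~\ref{thm:multiplicative_c-simple-alternating_dimap}, the a-Tutte invariant factors over the non-loop blocks and the a-multiloop components of $D$, so
\begin{equation*}
T_a(D;x,y) \;=\; \Bigl(\prod_{r\in R} x^{|r|-1}\Bigr) \Bigl(\prod_{s\in S} y^{|s|-\aface(s)}\Bigr) T_a(D';x,y),
\end{equation*}
by the a-analogues of Lemmas~\ref{lem:T_c-c-cycle_blocks} and~\ref{lem:T_c-c-multiloops}. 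Since $D$ is an a-alternating dimap with all of its a-cycle blocks and a-multiloops removed, $D'$ contains no proper $1$-loops and no proper $\omega$-semiloops (those live in a-cycle blocks) and no proper $\omega^{2}$-loops (those live in a-multiloops). So every non-loop block of $D'$ lies in $\hbox{alt}_a(\mathcal{G})$, and $D'$ itself is an element of $\hbox{alt}_a(\mathcal{G})$. Comparing the $x$- and $y$-degrees of the two factorisations of the common polynomial and using the a-version of Theorem~\ref{thm:T_G=T_c_alt_c(G)} then forces $|B| = \sum_{r\in R}(|r|-1)$, $|L| = \sum_{s\in S}(|s|-\aface(s))$, and $T(G';x,y) = T_a(D';x,y)$; together with $D' \in \hbox{alt}_a(\mathcal{G})$, another application of Theorem~\ref{thm:T_G=T_c_alt_c(G)} (plus the injectivity of $\hbox{alt}_a$ on the relevant class of graphs) gives $\hbox{alt}_a(G') \cong D'$.

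\textbf{Backward direction.} Conversely, assume $\hbox{alt}_a(G') \cong D'$ and that the numbers of bridges and loops of $G$ are the claimed sums. Applying multiplicativity of $T$ over blocks and of $T_a$ over a-blocks and a-multiloops, then evaluating each a-cycle block and a-multiloop using the a-versions of Corollaries~\ref{cor:c-cycle_blocks} and~\ref{cor:c-multiloops} and finally substituting $T_a(D';x,y) = T(G';x,y)$ via Theorem~\ref{thm:T_G=T_c_alt_c(G)}, we obtain
\begin{equation*}
T_a(D;x,y) \;=\; x^{\sum_{r\in R}(|r|-1)} y^{\sum_{s\in S}(|s|-\aface(s))} T(G';x,y) \;=\; x^{|B|} y^{|L|} T(G';x,y) \;=\; T(G;x,y).
\end{equation*}

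The only genuine obstacle is verifying the a-analogues invoked above; the main one is the multiplicativity of $T_a$ over non-loop blocks and a-multiloop components of an a-alternating dimap. This is the precise translation by triality of Theorem~\ref{thm:multiplicative_c-simple-alternating_dimap}, and once it is in hand the rest of the argument is a transparent mirroring of the c-case. A minor bookkeeping point is that the exponent of $y$ contributed by an a-multiloop $s$ is $|s| - \aface(s)$ (the triality-image of $|s| - \cface(s)$ in the c-multiloop case), so the statement's ``$\cface(s)$'' is to be read as its triality-partner on the a-side.
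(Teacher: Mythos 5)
Your proof is correct and matches the paper's intent exactly: the paper supplies no separate argument for this corollary, presenting it as the a-analogue of the preceding theorem obtained by mirroring that proof with the a-versions of the supporting lemmas (multiplicativity over blocks, the a-cycle-block and a-multiloop evaluations, and $T(G;x,y)=T_a(\hbox{alt}_a(G);x,y)$), which is precisely what you do. Your bookkeeping remark is also a genuine catch: the exponent contributed by an a-multiloop $s$ is indeed $\abs{s}-\aface(s)$ (e.g.\ an a-multiloop of size two has $\cface=2$, $\aface=1$ and $T_a=y$), so the ``$\cface(s)$'' in the stated corollary, like the missing word ``loops'', is a typo.
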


\section{Concluding Remarks}

We conclude this article with some problems for further research.

Recall that in \cref{sec:characterisations_of_ETI}, we only discuss extended Tutte invariants for alternating dimaps that are embedded on an orientable surface that has genus zero. The reason is, we believe that more edge types may need to be defined if an alternating dimap is embedded on an orientable surface that has genus greater than zero. For instance, consider a $ k $-posy for $ k \ge 1 $ (see \cite{Farr2018} for more details), and a minor of $ \hbox{alt}_c(K_4) $ that is embedded on a torus (see Figure~\ref{fig:Torus}). (The minor of $ \hbox{alt}_c(K_4) $ that is shown in Figure~\ref{fig:Torus} can be obtained by performing four $ 1 $-reductions and four $ \omega $-reductions on the edges of $ \hbox{alt}_c(K_4) $.) These alternating dimaps each have edges that are proper $ \mu $-semiloops for two or three distinct $ \mu $, a situation that cannot occur in the plane and which increases the ways in which a Tutte invariant may fail to be well defined.
\begin{figure}
	\centering
	\begin{subfigure}[t]{0.20\textwidth}
		\includegraphics[width=\textwidth]{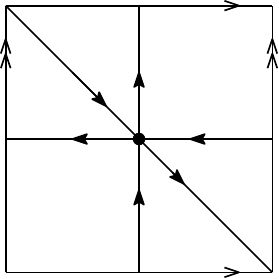}
		\caption{}
	\end{subfigure}\qquad
	~
	\begin{subfigure}[t]{0.20\textwidth}
		\includegraphics[width=\textwidth]{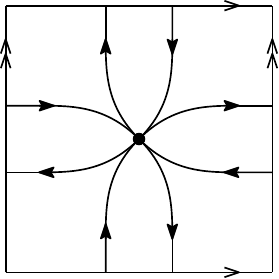}
		\caption{}
	\end{subfigure}
	\caption{(a) A 1-posy, (b) A minor of $ \hbox{alt}_c(K_4) $}
	\label{fig:Torus}
\end{figure}

\begin{enumerate}
	\item How do we properly define extended Tutte invariants for alternating dimaps that are embedded on an orientable surface with genus greater than zero? 
\end{enumerate}

Knowing that the c-Tutte invariant is an analogue of the Tutte polynomial under certain circumstances, a natural question that arises is:
\begin{enumerate}[resume]	
	\item Does the c-Tutte invariant yield another option to compute the Tutte polynomial for abstract planar graphs?
	\item Is the c-Tutte invariant well defined for some alternating dimaps that are embedded on an orientable surface with genus greater than zero? If so, can we characterise them?
	\item \label{Iain} Can we evaluate the c-Tutte invariant in terms of the ribbon graph polynomial of Bollob\'{a}s and Riordan (cf. Theorem~\ref{thm:T_G=T_c_alt_c(G)})? (Suggested by Iain Moffatt.)
\end{enumerate}

\end{document}